\documentclass[11pt]{article}

\usepackage{amsmath,amsthm,amscd,latexsym}
\usepackage{amsfonts,pdfsync,color,graphicx}
\usepackage[psamsfonts]{amssymb}
\usepackage{epsfig}
\usepackage{color}
\usepackage[noadjust]{cite}

\usepackage{listings}

\usepackage[top=1.5in,bottom=1.5in,left=1.3in,right=1.0in]{geometry}

\usepackage{tikz}

\newcommand{\R}{{{\mathbb R}}}

\newtheoremstyle{mystyle}               
  {}                
  {}                
  {}        
  {}                
  {\bfseries \itshape}       
  {.}      
  { }      
  {}       

\newtheorem{theorem}{Theorem}[section]
\newtheorem{proposition}[theorem]{Proposition}
\newtheorem{lemma}[theorem]{Lemma} 
\newtheorem{corollary}[theorem]{Corollary}
\theoremstyle{definition}

\newtheorem{example}[theorem]{Example}	
\theoremstyle{mystyle}
\newtheorem{remark}[theorem]{Remark}

\title{Green's Functions and Spectral Theory for the Hill's Equation \footnote{A. Cabada  was partially supported by Ministerio de Educaci\'on y Ciencia, Spain, and FEDER, Project MTM2013-43014-P. J. A. Cid was partially supported by Xunta de Galicia (Spain), project EM2014/032.}}
\date{}
\author{Alberto Cabada$^1$, Jos\'e A. Cid$^2$ and Luc{\' i}a L\'opez Somoza$^1$\\
$^1$ Departamento de An\'alise Ma\-te\-m\'a\-ti\-ca, Facultade de Matem\'aticas, \\
Universidade de Santiago de Com\-pos\-te\-la, 15782 Santiago de Compostela,\\ Galicia, Spain.\\
alberto.cabada@usc.es; lucia.lopez.somoza@gmail.com\\
$^2$ Departamento de Matem\'aticas, Universidade de Vigo, 32004, Pabell\'on 3,\\ Campus de Ourense, Galicia, Spain.\\
angelcid@uvigo.es}

\begin{document}
\maketitle
\begin{abstract}
The aim of this paper is to show certain properties of the Green's functions related to the Hill's equation coupled with different two point boundary value conditions. We will obtain the expression of the Green's function of Neumann, Dirichlet, Mixed and anti-periodic problems as a combination of the Green's function related to periodic ones. 

As a consequence we will prove suitable results in spectral theory and deduce some comparison results for the solutions of the Hill's equation with different boundary value conditions.
\end{abstract}

\noindent{\bf Key Words:}  Green's function;  Periodic problem;  Separated boundary conditions; Spectral theory;  Comparison results.

\noindent{\bf AMS Subject Classification:}  34B05;  34B08;  34B09; 34B27

\section{Introduction}
The Hill's equation, 
\begin{equation} \label{e-hill-intro}\tag{1}
u''+a(t)\,u=0,
\end{equation}
has numerous applications in engineering and physics. We can find, among others, some problems in mechanics, astronomy, circuits, electric conductivity of metals and cyclotrons. 

Moreover, the theory related to the Hill's equation can be extended to every differential equation written in the general form
\begin{equation}\label{e-homog-segundo-orden}\tag{2}
u''+p(t)\,u'+q(t)\,u=0
\end{equation}
such that the coefficients $p$ and $q$ have enough regularity. This is due to the fact that, with a suitable change of variable, the previous equation transforms in one of the type of (\ref{e-hill-intro}) (Details can be seen in \cite{simmons}).

As a first example  let us consider a mass-spring system, that is, a spring with a mass $m$ hanging from it. Denoting by $x(t)$ the position of the mass at the instant $t$, by considering a time variable friction coefficient $\mu(t)$ (for instance, if the spring moves between two different environments, with different coefficient of friction in each of them, or in an environment where there are important variations of density or temperature that cause changes in the friction coefficient depending on the instant of the  considered process) and an external force $F(t)$ acting periodically on the mass in such a way that it tends to move the mass back into its position of equilibrium, acting in proportion to the distance to that position, we obtain the model 
$$x''(t)+\mu(t)\,x'(t)+\left(\frac{k}{m}+F(t)\right)x(t)=0.$$

Thus, we arrive to an equation in the form (\ref{e-homog-segundo-orden}) in which, if $\mu(t)$ has enough regularity, we could transform the equation in one in the form (\ref{e-hill-intro}), where the potential $a(t)$ follows the expression (see \cite{simmons} for details)
$$a(t)=\frac{k}{m}+F(t)-\frac{1}{4}\left(\frac{\mu(t)}{m}\right)^2-\frac{1}{2}\,\frac{\mu'(t)}{m}.$$

We note that in such a case, even if the data involved in the equation ($\mu$ and $F$) have constant sign on its interval of definition, such property may be not true for the potential $a$.

A second example studied in \cite{csizmadia} is a mathematical (or inverted) pendulum. If we assume that the oscillations of the pendulum are small and that the suspension point of the string vibrates vertically with an acceleration $a(t)$ then, as it is proved in \cite{csizmadia}, the movement would be modeled by the equation (which follows the form (\ref{e-hill-intro}))
$$\theta''(t)-\frac{1}{l}\,(g+a(t))\,\theta(t)=0,$$
where $g$ denotes the gravity, $l$ the length of the string and $\theta$ represents the angle between the string and the perpendicular line to the base. 

Other equations that fit on the framework of Hill's equation are Airy's equation, $u''(t)+t\,u(t)=0$ (see \cite{simmons}), and Mathieu's equation, $u''(t)+(c+b\,\cos{t})\,u(t)=0$ (see \cite{zhanli,cacid1,torres1}).

At the moment of studying oscillation phenomena of the solutions of \eqref{e-hill-intro}, it is observed that these are determined by the potential $a(t)$. In particular, solutions of \eqref{e-hill-intro} do not oscillate when $a(t)<0$ but they do it infinite times for $a(t)>0$ large enough.
Moreover, the larger the potential $a(t)$ is, the faster the solutions of \eqref{e-hill-intro} oscillate.

By simply considering that every integrable function can be rewritten as
$$a(t)=\frac{1}{T}\int_{0}^{T}a(s)\,ds +\tilde{a}(t),$$
it is obvious that studying the potentials $a(t)$ for which the solutions of the equation (\ref{e-hill-intro}) oscillate in $[0,T]$, is equivalent to study the values of $\lambda\in \mathbb{R}$ for which the equation
\begin{equation} \label{e-hill-intro2}\tag{3}
u''+[a(t)+\lambda]\,u=0, \quad t\in[0,T],
\end{equation}
with $a\in L^{\alpha}[0,T]$ fixed, $\alpha\ge 1$, has no trivial solution.

If we consider the equation (\ref{e-hill-intro2}) coupled with suitable boundary value conditions, we have a spectral problem.

First studies about the Hill's equation are focused on the homogeneous case, from the point of view of the classical oscillation theory of Sturm-Liouville (\cite{simmons,magnus}). In particular, from the study of the equation (\ref{e-hill-intro2}) under periodic boundary conditions, important results related to the stability of solutions were obtained.

Afterwards the non homogeneous periodic problem,
$$u''(t)+[a(t)+\lambda]\,u(t)=\sigma(t), \; t\in[0,T], \quad
u(0)=u(T), \quad u'(0)=u'(T),$$
was studied, with $a\in L^{\alpha}[0,T]$, $\alpha\ge 1$ and $\sigma\in L^1[0,T]$. In this case it results specially interesting the study of constant sign solutions when $\sigma$ does not change sign. This situation could be interpreted in a physical way by considering $\sigma$ as an external force acting over the system; then constant sign solutions would mean that a positive perturbation maintains oscillations above or below the equilibrium point.

The study of constant sign solutions carried to consider comparison principles (that is, maximum and antimaximum principles) which, later, were related to the constant sign of the Green's function.

The non homogeneous periodic problem has ben widely considered in the literature, see \cite{cacid, cacid1, cacidtv, torres1, zhangMN05, zhang, zhanli} and references therein.

Nevertheless, it is also interesting the study of different boundary conditions that frequently appear in the field of differential equations as, for instance, Neumann, Dirichlet, Mixed and anti-periodic.

The main purpose of this paper, given in Section 3,  consists on obtaining the explicit expression of the Green's function related to Neumann, Dirichlet, Mixed and anti-periodic boundary conditions, as a linear combination of the Green's function related to the periodic problem. As a flavor of our results see for instance formulas \eqref{e-Green-NP} and \eqref{e-Green-DP}. The importance of these results resides in the fact that the Green's function of a non homogeneous problem completely characterizes its solutions. In particular, this paper proves that the study of all boundary value problems for Hill's equation could be reduced to the study of the Green's function of the periodic one, which, as we have noted above, has been widely treated in the recent literature. It is important to mention that at the beginning of that section we prove a general result satisfied by the Green's function of a general self-adjoint operator.

Once we have such expressions, in order to assure the constant sign of the Green's function related to the different types of boundary value problems considered in Section 3,  we apply in Section 4 previous results given in the literature for the periodic problem. This way, we obtain conditions to warrant the existence of constant sign solutions for Neumann, Dirichlet and Mixed problems without doing a direct study of such problems. Moreover, we are able to compare their constant sign. As consequence, we deduce direct relations between the Green's functions. These results allow us to obtain comparison principles which warrant that, for certain intervals of the parameter $\lambda$, the solution of the non homogeneous Hill's equation under some conditions is bigger at every point than the solution of the same equation under another type of boundary conditions.

We also obtain a decomposition of the spectrum of some problems as a combination of the other ones; this allows us to deduce a certain order of appearance for the eigenvalues of each problem. Moreover, we include some numerical examples that hint an order of eigenvalues even more precise than the one theoretically proved.

In order to do the paper self-contained we start, in next section, with a summary of the main known results related to the periodic problem.

\section{Preliminaries}
We introduce now the notation and definitions we will use all along the paper.

\vspace*{0.1cm}
Let $L[a]$ be the Hill's operator associated to potential $a$
\begin{equation}
\label{e-L[a]}
L[a]\,u(t)\equiv u^{\prime\prime}(t)+a(t)\,u(t), \quad t \in [0,T],
\end{equation}
with
$a: I \to \R, \ a\in L^\alpha(I),\; \alpha\geq 1,$
where $I=[0,T]$. We will denote by $J=[0,2 \,T]$. 

\vspace*{0.1cm}
We will work both with the positive part ($a_+(t)=\max\{a(t),\,0\}, \  t\in I$) and with the negative one ($a_-(t)=-\min\{a(t),\,0\}, \ t\in I$) of potential $a$.

\vspace*{0.1cm}
On the other hand, given $1\le \alpha \le \infty$ we denote by $\alpha^*$ its conjugate, that is, the number satisfying the relation $\displaystyle \frac{1}{\alpha}+\frac{1}{\alpha^*}=1$  (with $\alpha=1$ and $\alpha^*=\infty$ and vice-versa). 

\vspace*{0.1cm}
Moreover, $h\succ 0$ means a function $h\in L^{\alpha}(c,d)$ such that $h(t)\ge 0$ for a.\,e. $t\in [c,d]$ and $h\not\equiv 0$ on $[c,d]$, where a.\,e. means in every point except for a set with measure zero. 

\vspace*{0.1cm}
We will work with the space $W^{2,1}(c,d)$, defined as the set of functions $u\in C^1[c,d]$ such that $u'$ is absolutely continuous on $[c,d]$ (in the sequel $AC[c,d]$), with $c<d$ fixed in each case.

\vspace*{0.1cm}
Let then $X\subset W^{2,1}(I)$ be a Banach space such that the homogeneous equation 
\begin{equation*}
L[a]\,u(t)=0\quad \mbox{ a. e. }\; t\in I,\qquad u\in X,
\end{equation*}
has only the trivial solution.

Previous condition is known as {\it operator $L[a]$ is nonresonant in $X$}. It is very well known that if $\sigma \in L^1(I)$ and this condition is satisfied, we have that problem
$$L[a]\,u(t)=\sigma(t)\quad \mbox{ a. e.  }\;t\in I,\qquad u\in X,$$
has a unique solution given by
$$u(t)=\int_0^T G[a,T](t,s)\,\sigma(s)\,ds,\qquad \forall\; t\in I.$$
$G[a,T](t,s)$ is the so-called Green's function related to operator $L[a]$ in $X$ and it is uniquely determined. See \cite{Cab} for details.
\vspace*{0.1cm}

We say that $L[a]$ admits the maximum principle (MP) in $X$ if and only if
$$u\in X, \quad L[a]\, u \ge 0 \ \mbox{on $I$}\Longrightarrow u \le  0 \ \mbox{on $I$},$$
and $L[a]$ admits the antimaximum principle (AMP) in $X$ if and only if
$$u\in X, \quad L[a]\, u \ge 0 \ \mbox{on $I$}\Longrightarrow u\ \ge 0 \ \mbox{on $I$}.$$

It is immediate to verify that if $L[a]$ satisfies MP or AMP in $X$ then it is nonresonant in $X$.

It is very well known (\cite{Cab,Cop,magnus}) that the operator $L[a]$ is non resonant and self-adjoint on $X$ if and only if the related Green's function exists and is symmetrical with respect to the diagonal of its square of definition, that is,
\[
G[a,T](t,s)=G[a,T](s,t),\qquad \forall \;(t,s)\in I \times I.
\]

In the sequel we compile several known results related to the Green's function of the periodic boundary value problem
\[
\label{e-P} L[a]\,u(t)=0\quad \mbox{ a. e.  }\; t\in I,\quad
u(0)=u(T),\; u'(0)=u'(T),
\]
which we will denote as $G_P[a,T](t,s)$.

\begin{lemma}\label{l-per}
\cite[Lemma 2.2]{cacid}
Suppose that the Green's function $G_P[a,T]$ does not change sign on $I\times I$ and vanishes at some point $(t_0,s_0)\in I\times I$, then $t_0=s_0$, $(t_0,s_0)=(0,T)$ or $(t_0,s_0)=(T,0)$.
\end{lemma}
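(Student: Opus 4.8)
The plan is to argue by contradiction, fixing one variable and treating the Green's function as a solution of the homogeneous equation carrying a single jump in its derivative. Without loss of generality I would assume $G_P[a,T]\ge 0$ on $I\times I$ (the case $G_P[a,T]\le 0$ is entirely analogous, with ``minimum'' replaced by ``maximum'' throughout). Suppose $G_P[a,T](t_0,s_0)=0$ with $t_0\ne s_0$ and $(t_0,s_0)\notin\{(0,T),(T,0)\}$; the goal is to reach a contradiction. Set $g(t):=G_P[a,T](t,s_0)$ and recall the defining properties of the periodic Green's function: $g$ solves $L[a]\,g=0$ on each of $(0,s_0)$ and $(s_0,T)$, it is continuous on $I$ with jump $g'(s_0^+)-g'(s_0^-)=1$, and it satisfies the periodic conditions $g(0)=g(T)$ and $g'(0^+)=g'(T^-)$. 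Since $g\ge 0$ and $g(t_0)=0$, the point $t_0$ is a global minimum of $g$.

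The engine of the proof is to show that such a zero forces $g$ to vanish identically on a whole subinterval, which then clashes with the jump relation. Suppose first $t_0\in(0,T)\setminus\{s_0\}$. Then $g$ solves the ODE near $t_0$, hence is $C^1$ there, so the interior minimum gives $g'(t_0)=0$; together with $g(t_0)=0$, uniqueness for the initial value problem of $L[a]\,g=0$ (valid for $L^1$ potentials) yields $g\equiv 0$ on the component of $(0,s_0)\cup(s_0,T)$ containing $t_0$. Say $t_0\in(0,s_0)$, so $g\equiv 0$ on $[0,s_0]$ (the case $t_0\in(s_0,T)$ is symmetric). Then $g(0)=g'(0^+)=0$, the periodic conditions give $g(T)=g'(T^-)=0$, and backward uniqueness on $[s_0,T]$ forces $g\equiv 0$ there as well; hence $g'(s_0^+)=g'(s_0^-)=0$, contradicting $g'(s_0^+)-g'(s_0^-)=1$.

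It remains to treat the boundary configurations. If $t_0\in\{0,T\}$ and $s_0\in(0,T)$, periodicity gives $g(0)=g(T)=0$, so both endpoints are global minima of the nonnegative function $g$; the one-sided derivatives then satisfy $g'(0^+)\ge 0$ and $g'(T^-)\le 0$, and the periodic condition $g'(0^+)=g'(T^-)$ forces both to vanish. Thus $g(0)=g'(0^+)=0$, and forward uniqueness on $[0,s_0]$ reduces this to the configuration of the previous paragraph, giving the same contradiction. The last case $t_0\in(0,T)$, $s_0\in\{0,T\}$ follows from the symmetry $G_P[a,T](t_0,s_0)=G_P[a,T](s_0,t_0)$ (which holds because the periodic problem is self-adjoint): applying the boundary argument to $t\mapsto G_P[a,T](t,t_0)$ at the boundary point $s_0$ yields the contradiction. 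The only remaining possibility is $t_0,s_0\in\{0,T\}$, which is covered by either $t_0=s_0$ (the corners $(0,0)$, $(T,T)$) or $(t_0,s_0)\in\{(0,T),(T,0)\}$, so the claim holds. I expect the main obstacle to be precisely this boundary bookkeeping: off the corners one cannot read a vanishing derivative directly from the minimum, and it is the periodic coupling of the two endpoints (together with the symmetry) that supplies it.
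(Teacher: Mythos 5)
Your proof is correct, and it actually does more than this paper does for the statement: Lemma \ref{l-per} is quoted from \cite{cacid} without proof, and the only argument given in the text is the one for its generalization, Proposition \ref{p-Gpos}, valid for arbitrary self-adjoint boundary conditions. That proof shares your core engine (a zero of a signed Green's function at an interior off-diagonal point forces vanishing Cauchy data at an interior extremum, hence, by uniqueness for the Carath\'eodory initial value problem, vanishing of whole slices, contradicting the unit jump of $\partial G/\partial t$ across the diagonal), but it propagates the zero set two-dimensionally through the symmetry $G(t,s)=G(s,t)$ and reaches the contradiction along a range of diagonal points, and it never invokes the boundary conditions themselves. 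Consequently it can only conclude that zeros lie on the diagonal or on the boundary of $I\times I$ --- which, as the Dirichlet example following Proposition \ref{p-Gpos} shows, is optimal at that level of generality. Your argument instead works on the single slice $g=G_P[a,T](\cdot,s_0)$ and uses the periodic coupling $g(0)=g(T)$, $g'(0^+)=g'(T^-)$ in two essential ways: to transport zero Cauchy data from one component of $(0,T)\setminus\{s_0\}$ to the other (interior case), and to force the one-sided derivatives at endpoint minima to vanish (boundary case), with the symmetry of $G_P$ used only to swap the roles of $t_0$ and $s_0$. This is precisely what is needed to exclude non-corner boundary zeros and so to recover the sharper, periodic-specific conclusion $t_0=s_0$ or $(t_0,s_0)\in\{(0,T),(T,0)\}$. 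In short, the paper's in-text technique buys generality at the price of a weaker conclusion, while your proof exploits periodicity to obtain the full strength of the cited lemma; the boundary bookkeeping you identified as the main obstacle is handled correctly.
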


\begin{lemma}
\label{l-Green-comparison} 
\cite[Lemma 2.3]{cacid}
The following claims are equivalent:
\begin{itemize}

\item[(1)] $G_P[a,T](t,s) \ge 0$ $(\le 0)$ on $I \times I$.

\item[(2)] If $u \in W^{2,1}(I)$, $u(0)=u(T),\; u'(0)=u'(T)$, and $L[a] \, u \succ 0$ on $I$ then $u>0$ $(<0)$ on $I$.

\end{itemize}
\end{lemma}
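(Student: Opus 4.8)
The plan is to exploit the integral representation of solutions furnished by the Green's function together with the sign hypothesis, upgrading non-strict inequalities to strict ones by means of Lemma \ref{l-per}. I treat the $\ge 0$ / $>0$ version; the parenthetical $\le 0$ / $<0$ statement follows from the identical argument applied to $-u$, $-\sigma$ and $-G_P[a,T]$. Note that since $G_P[a,T]$ exists, the operator $L[a]$ is nonresonant under the periodic conditions, so every $u$ satisfying the conditions of claim (2) is the unique solution and admits the representation $u(t)=\int_0^T G_P[a,T](t,s)\,\sigma(s)\,ds$ with $\sigma=L[a]u$.

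For the implication $(1)\Rightarrow(2)$, I would take $u$ periodic with $\sigma:=L[a]u\succ 0$ and write the representation above. Since $G_P[a,T]\ge 0$ and $\sigma\ge 0$, the integrand is nonnegative, whence $u\ge 0$ on $I$ at once. The substantive step is strictness: fixing $t_0\in I$ and assuming $u(t_0)=0$ forces the nonnegative integrand $G_P[a,T](t_0,s)\,\sigma(s)$ to vanish for a.e.\ $s$; as $\sigma\succ 0$ the set $\{s:\sigma(s)>0\}$ has positive measure, so $s\mapsto G_P[a,T](t_0,s)$ would have to vanish on a set of positive measure. But by Lemma \ref{l-per} this function can only vanish at $s=t_0$, and at $s=T$ when $t_0=0$ or $s=0$ when $t_0=T$ — that is, at finitely many points — a contradiction. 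Hence $u(t_0)>0$ for every $t_0$, i.e.\ $u>0$ on $I$.

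For $(2)\Rightarrow(1)$ I would argue by contraposition. Suppose $G_P[a,T](t_1,s_1)<0$ at some $(t_1,s_1)\in I\times I$. Using continuity of the Green's function on $I\times I$, there is a relatively open neighbourhood $V\subset I$ of $s_1$ on which $G_P[a,T](t_1,\cdot)<0$. Choosing $\sigma\succ 0$ supported in $V$ (for instance the characteristic function of a small subinterval, which lies in $L^\alpha(I)$), the associated solution satisfies $u(t_1)=\int_0^T G_P[a,T](t_1,s)\,\sigma(s)\,ds<0$, because the integrand is $\le 0$ and strictly negative on the positive-measure support of $\sigma$. This contradicts the conclusion $u>0$ of (2), so no such point can exist and $G_P[a,T]\ge 0$ on $I\times I$.

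The delicate point I expect to be the main obstacle is the passage from $u\ge 0$ to the strict inequality $u>0$ in $(1)\Rightarrow(2)$: this is precisely where Lemma \ref{l-per} is indispensable, since it certifies that the zero set of $s\mapsto G_P[a,T](t_0,s)$ is negligible and therefore cannot absorb the positive mass of $\sigma$. By contrast, the converse direction is comparatively routine, relying only on the continuity of $G_P[a,T]$ to localize a forcing term $\sigma$ precisely where the Green's function is negative.
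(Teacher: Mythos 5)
Your proof is correct: the representation $u(t)=\int_0^T G_P[a,T](t,s)\,\sigma(s)\,ds$ plus Lemma \ref{l-per} (which confines the zeros of $s\mapsto G_P[a,T](t_0,s)$ to at most two points, hence a null set that cannot absorb the positive mass of $\sigma$) gives $(1)\Rightarrow(2)$, and the contrapositive argument for $(2)\Rightarrow(1)$ via continuity of the Green's function and a forcing term localized where it is negative is also sound. Note that the paper itself states this lemma without proof, quoting it from \cite{cacid}; your argument is essentially the standard one used there, so there is nothing to contrast beyond the minor remark that the nonpositive case is handled by rerunning the argument with all inequalities reversed rather than by literally substituting $-u$, $-\sigma$, $-G_P[a,T]$ (negating $\sigma$ would break the hypothesis $\sigma\succ 0$).
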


The previous lemma establishes the equivalence between the constant sign of the Green's function and the strict maximum and antimaximum principles. 

\begin{lemma}
\cite[Lemma 2.4]{cacid}
\label{l-Green-neg}
If $G_P[a,T] \le 0$  on $I \times I$ then $G_P[a,T]< 0$ on $I \times I$.
\end{lemma}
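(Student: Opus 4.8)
The plan is to argue by contradiction: assuming $G_P[a,T]\le 0$, I would show that a zero of $G_P[a,T]$ cannot exist, so that $G_P[a,T]<0$ throughout $I\times I$. The entry point is Lemma~\ref{l-per}: since $G_P[a,T]$ does not change sign, every zero $(t_0,s_0)$ must satisfy either $t_0=s_0$ or $(t_0,s_0)\in\{(0,T),(T,0)\}$. It therefore suffices to rule out zeros on the diagonal and at the two corners.

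First I would reduce the corners to the diagonal using only the periodic conditions in the $t$ variable. Fixing $s=T$ and writing $g(t)=G_P[a,T](t,T)$, the condition $g(0)=g(T)$ reads $G_P[a,T](0,T)=G_P[a,T](T,T)$; fixing $s=0$ and arguing identically gives $G_P[a,T](T,0)=G_P[a,T](0,0)$. Hence a zero at a corner forces a zero at a diagonal point, and it is enough to prove that $G_P[a,T](t,t)<0$ for every $t\in I$.

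The core of the argument is the diagonal case, handled through the unit jump satisfied by the Green's function. For fixed $s_0$ the function $g(t)=G_P[a,T](t,s_0)$ is continuous, solves $L[a]\,g=0$ off $t=s_0$, obeys the periodic conditions, and carries the jump $g'(s_0^+)-g'(s_0^-)=1$ in its first derivative across the singularity (see \cite{Cab}). Suppose $g(s_0)=G_P[a,T](s_0,s_0)=0$. Since $g\le 0$ with $g(s_0)=0$, the point $s_0$ is a maximum of $g$, so inspecting the one-sided difference quotients gives $g'(s_0^-)\ge 0$ and $g'(s_0^+)\le 0$, whence $g'(s_0^+)-g'(s_0^-)\le 0$, contradicting the jump value $1$. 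For the endpoint cases $s_0\in\{0,T\}$ the singularity lies at the identified points $0\sim T$, where a limiting argument from interior values of $s$ transports the jump into the boundary form $g'(0)-g'(T)=1$; reading the maximum argument on the circle $\R/T\,\Z$ (using $g(0)=g(T)$, with $g'(0^+)\le 0$ and $g'(T^-)\ge 0$) produces the same contradiction.

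Concluding, no diagonal zero can occur; by the corner-to-diagonal reduction no corner zero occurs either, and by Lemma~\ref{l-per} these exhaust the candidates, so $G_P[a,T]<0$ on all of $I\times I$. I expect the only delicate point to be the endpoint diagonal case $s_0\in\{0,T\}$: there one must justify carefully that the unit jump is transported to the boundary through the periodic conditions and that the extremum argument should be read on the circle rather than on the interval. Everything else reduces to the standard sign bookkeeping for one-sided derivatives at a maximum together with the defining jump relation of a second-order Green's function.
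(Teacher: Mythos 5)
The paper does not actually prove this lemma: it is quoted directly from \cite[Lemma 2.4]{cacid}, so there is no internal proof to compare against. Judged on its own, your argument is correct and is essentially the standard one behind the cited result: Lemma~\ref{l-per} confines any zero to the diagonal or the corners $(0,T)$, $(T,0)$; the corners reduce to diagonal endpoint zeros via the boundary conditions $G_P[a,T](0,s)=G_P[a,T](T,s)$; and a diagonal zero of a nonpositive function forces $g'(s_0^-)\ge 0\ge g'(s_0^+)$, contradicting the unit jump $g'(s_0^+)-g'(s_0^-)=1$ --- precisely the mechanism this paper itself exploits in the proof of Proposition~\ref{p-Gpos}. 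The endpoint case $s_0\in\{0,T\}$ is, as you note, the only delicate point, and your resolution is sound: the wrapped jump relation $g'(0^+)-g'(T^-)=1$ follows from continuity of $G_P[a,T]$ and $\partial_t G_P[a,T]$ in $s$ off the diagonal (equivalently, from viewing the periodic Green's function on the circle $\R/T\,\Z$), and the maximum argument at the identified point $0\sim T$ then yields the same contradiction; your proof also makes transparent why the analogous strengthening fails for $G_P[a,T]\ge 0$, where a diagonal zero is a minimum and is compatible with a positive jump, as the example $a=(\pi/T)^2$ in Section 3 shows.
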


\begin{lemma}
\cite[Lemma 2.8]{cacid}
\label{l-Green-dec}
Let $a_1,\, a_2 \in L^\alpha(I)$ be such that the Green's functions of the corresponding problem, $G[a_1,T]$ and $G[a_2,T]$, have the same constant sign on $I \times I$. If $a_1 \succ a_2$ on $I$ then $G[a_1,T](t,s) < G[a_2,T](t,s)$ for all $(t,s) \in I \times I$.
\end{lemma}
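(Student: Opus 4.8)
The plan is to derive an integral (resolvent-type) identity for the difference $G[a_2,T]-G[a_1,T]$ and then read off its sign from the hypotheses. Fix $s\in I$ and abbreviate $v(\cdot)=G[a_2,T](\cdot,s)$ and $w(\cdot)=G[a_1,T](\cdot,s)$. Both functions belong to the space $X$ associated with the problem, both are continuous on $I$, and each satisfies $L[a_i]\,g=\delta_s$ in the distributional sense. Since the term $a_i\,g$ contributes no singular part, the jump of the first derivative at $t=s$ is the same normalization constant for $i=1,2$; hence $v'$ and $w'$ have identical jumps at $s$, and the difference $v-w$ is a genuine element of $W^{2,1}(I)\cap X$.

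First I would compute, away from the singularity and then distributionally,
\[
L[a_1]\,v = L[a_2]\,v + (a_1-a_2)\,v = \delta_s + (a_1-a_2)\,v, \qquad L[a_1]\,w=\delta_s .
\]
Subtracting, the singular parts cancel and $L[a_1](v-w)=(a_1-a_2)\,v\in L^1(I)$, because $v$ is bounded and $a_1-a_2\in L^{\alpha}(I)$. As $L[a_1]$ is nonresonant, the representation formula applied to $v-w$ yields the identity
\[
G[a_2,T](t,s)-G[a_1,T](t,s)=\int_0^T G[a_1,T](t,\tau)\,(a_1-a_2)(\tau)\,G[a_2,T](\tau,s)\,d\tau,
\]
valid for every $(t,s)\in I\times I$.

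Next I would determine the sign of the right-hand side. If the common sign is nonnegative, the three factors of the integrand are all nonnegative. If the common sign is nonpositive, Lemma~\ref{l-Green-neg} upgrades both Green's functions to strictly negative, so the product of the two Green's factors is positive and, multiplied by $(a_1-a_2)\ge 0$, the integrand is again nonnegative. In either case the integral is nonnegative, giving $G[a_1,T]\le G[a_2,T]$.

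Finally, for strictness, observe that since $a_1\succ a_2$ the set $E=\{\,\tau\in I:\ a_1(\tau)>a_2(\tau)\,\}$ has positive measure. For fixed $t$ and $s$, Lemma~\ref{l-per} guarantees that the maps $\tau\mapsto G[a_1,T](t,\tau)$ and $\tau\mapsto G[a_2,T](\tau,s)$ can vanish only at the diagonal or corner points, that is, on a finite, hence null, set of values of $\tau$. Therefore the integrand is strictly positive on $E$ except on a null set, the integral is strictly positive, and $G[a_1,T](t,s)<G[a_2,T](t,s)$. I expect the main obstacle to be the careful justification of the first two steps: showing that the distributional singularities of $v$ and $w$ cancel so that $v-w\in W^{2,1}(I)$, and that the forcing $(a_1-a_2)\,v$ is regular enough for the representation formula to apply. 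The delicate point in the last step is precisely that the positivity region of the Green's functions must meet $E$ in positive measure, which is exactly what Lemmas~\ref{l-per} and~\ref{l-Green-neg} ensure.
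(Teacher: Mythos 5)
Your proof is correct. A caveat on the comparison: this paper does not prove the lemma at all, it quotes it from the reference \cite[Lemma 2.8]{cacid}, so the relevant benchmark is the proof there. That proof performs the same core computation as yours --- for fixed $s$ the difference $v-w$ of the two Green's functions has cancelling derivative jumps, lies in $W^{2,1}(I)$ with periodic boundary conditions, and satisfies $L[a_1](v-w)=(a_1-a_2)\,v$ --- but then it concludes by feeding this differential identity into the strict maximum/antimaximum principle of Lemma \ref{l-Green-comparison}: once one checks that $(a_1-a_2)\,G[a_2,T](\cdot,s)\succ 0$ (resp.\ $\prec 0$), that lemma immediately yields the strict pointwise inequality. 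You instead integrate the identity into an explicit resolvent formula
\[
G[a_2,T](t,s)-G[a_1,T](t,s)=\int_0^T G[a_1,T](t,\tau)\,(a_1-a_2)(\tau)\,G[a_2,T](\tau,s)\,d\tau,
\]
and prove strict positivity of the right-hand side by hand, via the zero-set description of Lemma \ref{l-per} and the strict negativity upgrade of Lemma \ref{l-Green-neg}. The ingredients are the same (both routes need Lemma \ref{l-per} or Lemma \ref{l-Green-neg} to rule out degenerate vanishing), but the division of labour differs: the cited proof is shorter because Lemma \ref{l-Green-comparison} already encapsulates the strictness, whereas your version is more self-contained and has the side benefit of producing a quantitative identity for the difference of the two Green's functions rather than only its sign.
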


\begin{lemma}
\label{l-Green-int}
\cite[Corollary 3.1]{cacid}
Let $a_1,\, a_2 \in L^\alpha(I)$ be such that $a_1 \succ a_2$ on $I$ and assume that the Green's functions $G[a_1,T]$ and $G[a_2,T]$ have the same constant sign on $I \times I$. Then, for all $a \in L^\alpha(I)$ with $a(t) \in [a_2(t),a_1(t)]$ for a.\,e. $t \in I$, $G[a,T]$ exists and it has the same constant sign as $G[a_1,T]$ and $G[a_2,T]$.
\end{lemma}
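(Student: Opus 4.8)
The plan is to reduce everything to a monotone homotopy between the two extremal potentials and then to run a connectedness argument on the homotopy parameter, with Lemma~\ref{l-Green-dec} supplying the crucial monotonicity. Since $a_2 \le a \le a_1$ a.e., I would first build a single path joining $a_2$ to $a_1$ that passes through $a$ and is increasing with respect to the a.e.\ order: set $\gamma(s)=a_2+s\,(a-a_2)$ for $s\in[0,1]$ and $\gamma(s)=a+(s-1)(a_1-a)$ for $s\in[1,2]$, so that $\gamma(0)=a_2$, $\gamma(1)=a$, $\gamma(2)=a_1$, every $\gamma(s)$ lies in $[a_2,a_1]$, and $\gamma$ is strictly increasing in $s$ as soon as the increments $a-a_2$ and $a_1-a$ are $\succ 0$ (the degenerate cases $a\equiv a_2$ or $a\equiv a_1$ being trivial). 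Treating the two possible common signs separately, I assume first that $G[a_1,T]\ge 0$ and $G[a_2,T]\ge 0$; the nonpositive case is handled symmetrically.

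I would then introduce $S=\{s\in[0,2]: L[\gamma(s)] \text{ is nonresonant and } G[\gamma(s),T]\ge 0\}$, which contains $0$ and $2$, and aim at $S=[0,2]$, since this yields the statement at $s=1$. The strategy is to show that $S$ is both closed and relatively open in $[0,2]$. For closedness along an increasing sequence $s_n\uparrow s_*$ with $s_n\in S$, I would invoke Lemma~\ref{l-Green-dec}: the family $s\mapsto G[\gamma(s),T]$ is strictly decreasing while it remains nonnegative, so it stays squeezed in $0\le G[\gamma(s),T]\le G[a_2,T]$; a resonance at $s_*$ would force the resolvent norm to blow up, contradicting this uniform bound, so $L[\gamma(s_*)]$ is nonresonant, and by continuity of the Green's function on the nonresonant set $G[\gamma(s_*),T]=\lim_n G[\gamma(s_n),T]\ge 0$.

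The openness step is the one I expect to be the main obstacle. Continuity already propagates nonresonance to a neighborhood, so the only danger is that $G[\gamma(s),T]$ develops negative values as $s$ leaves a point of $S$. Here I would lean on the rigidity provided by Lemmas~\ref{l-per} and~\ref{l-Green-neg}: at a point where $G[\gamma(s),T]\ge 0$ its zeros can occur only on the diagonal or at the corners $(0,T),(T,0)$, so a newly born negative dip would have to originate at one of these points. Combining this with the two-sided bound $0\le G[a_1,T]\le G[\gamma(s),T]\le G[a_2,T]$, again coming from Lemma~\ref{l-Green-dec}, I would argue that the path cannot leave the nonnegative regime without crossing a resonance, which the closedness argument has already excluded.

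Connectedness of $[0,2]$ then forces $S=[0,2]$, and in particular $G[a,T]=G[\gamma(1),T]$ exists and shares the nonnegative sign of $G[a_1,T]$ and $G[a_2,T]$. For the nonpositive case I would run the identical scheme after upgrading $\le 0$ to the strict inequality $<0$ by Lemma~\ref{l-Green-neg} and reversing the direction of monotonicity. The delicate point throughout is that Lemma~\ref{l-Green-dec} presupposes that the two compared Green's functions already have the same constant sign, so the monotonicity cannot be applied blindly to $G[a,T]$ before its sign is known; the homotopy is precisely what lets me feed it only between points already in $S$, and the two endpoint anchors are what prevent an interior resonance or sign change.
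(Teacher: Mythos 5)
Your overall architecture (monotone homotopy, closedness via uniform bounds, openness, connectedness) can be made to work, but the openness step as you wrote it rests on a false principle, so there is a genuine gap. You claim that ``the path cannot leave the nonnegative regime without crossing a resonance.'' For the periodic Hill operator this is simply not true: by Lemma \ref{l-zhang-eigen-2}, $G_P[a+\lambda,T]\ge 0$ holds exactly for $\lambda_P(a,T)<\lambda\le\lambda_A(a,T)$, and by the Oscillation Theorem \ref{t-oscilacion} the first anti-periodic eigenvalue lies strictly below the next periodic one; hence along the path $\lambda\mapsto a+\lambda$ the Green's function continues to exist but loses nonnegativity the moment $\lambda$ crosses $\lambda_A(a,T)$, with no periodic resonance occurring there. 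This is precisely the scenario your openness step must exclude, and neither Lemma \ref{l-per} (zeros confined to the diagonal and corners) nor your closedness argument excludes it: a negative dip \emph{can} be born at a diagonal zero of a nonnegative periodic Green's function. The nonpositive case is indeed unproblematic, since Lemma \ref{l-Green-neg} upgrades $\le 0$ to strict negativity and openness follows from continuity; but in the nonnegative case your sketch does not close.

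The gap is fixable inside your own framework, and the fix is simpler than what you attempt: use the anchor at $s=2$ \emph{quantitatively}. If $s_0\in S$ and $s_0<2$, then $\gamma(2)=a_1\succ\gamma(s_0)$ and both Green's functions are nonnegative, so Lemma \ref{l-Green-dec} gives the strict inequality $G[\gamma(s_0),T](t,s)>G[a_1,T](t,s)\ge 0$ at every point of the compact square; thus $G[\gamma(s_0),T]$ is bounded below by a positive constant, and continuity of the Green's function in the potential yields openness at $s_0$ immediately, with no appeal to Lemma \ref{l-per}. You should also make your blow-up claim in the closedness step rigorous, which is easy: if $s_n\to s_*$ with $\sup_n\|G[\gamma(s_n),T]\|_\infty<\infty$ and $v$ were a nontrivial periodic solution at $s_*$, then $v(t)=\int_0^T G[\gamma(s_n),T](t,s)\,\bigl(\gamma(s_n)(s)-\gamma(s_*)(s)\bigr)\,v(s)\,ds$, whose right-hand side tends to $0$ uniformly, a contradiction. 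Finally, note that the paper itself does not prove this lemma; it quotes \cite[Corollary 3.1]{cacid}, where the argument is entirely different and much shorter: the spectral characterization of Lemma \ref{l-zhang-eigen} together with the monotone dependence of $\lambda_P$ and $\lambda_A$ on the potential gives $\lambda_P(a,T)\le\lambda_P(a_2,T)<0\le\lambda_A(a_1,T)\le\lambda_A(a,T)$ (and, in the nonpositive case, $\lambda_P(a,T)\ge\lambda_P(a_1,T)>0$) directly, with no homotopy at all.
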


Let ${\lambda}_P(a,T)$ be the smallest eigenvalue of the periodic equation
$$
u''(t)+(a(t) + \lambda)\, u(t)=0,\quad \mbox{ a. e.  }\;t\in I, \qquad u(0)=u(T),\; u'(0)=u'(T),
$$
and let ${\lambda}_A(a,T)$ be the smallest eigenvalue of the anti-periodic equation
$$
u''(t)+(a(t) + \lambda)\, u(t)=0,\quad \mbox{ a. e.  }\;t\in I,
\qquad u(0)=-u(T),\; u'(0)=-u'(T).
$$

\begin{lemma} \cite[Theorem 1.1]{zhang}
\label{l-zhang-eigen}
Suppose that $a \in L^1(I)$, then:
\begin{enumerate}
\item $G_P[a,T](t,s) \le 0$ on $I \times I$ if and only if ${\lambda}_P(a,T)>0$.
\item $G_P[a,T](t,s) \ge 0$ on $I \times I$ if and only if ${\lambda}_P(a,T)< 0\le {\lambda}_A(a,T)$.
\end{enumerate}
\end{lemma}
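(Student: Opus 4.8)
The plan is to translate the two sign conditions into statements about the position of $0$ relative to the spectrum of the self-adjoint operator $-L[a]$ under periodic and anti-periodic conditions, and then to slide the potential through the one-parameter family $a+\mu$, $\mu\in\R$, controlling the sign of $G_P[a+\mu,T]$ with the comparison tools already available. First I would record the classical oscillation/Floquet ordering of the eigenvalues of $u''+(a+\lambda)u=0$, namely $\lambda_P(a,T)<\lambda_A(a,T)\le\cdots$, together with the facts that the first periodic eigenvalue is simple with an eigenfunction $\varphi_0\succ 0$, that the first anti-periodic eigenfunction changes sign, and that $G_P[a+\mu,T]$ exists exactly when $\mu$ avoids the periodic eigenvalues. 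Since the $n$-th periodic eigenvalue of $L[a+\mu]$ equals $\lambda_n^P(a)-\mu$, the smallest periodic eigenvalue of $L[a+\mu]$ is positive precisely for $\mu<\lambda_P(a,T)$, which is the regime governing part (1).

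For part (1) I would prove both implications directly. For $(\Leftarrow)$, assuming $\lambda_P(a,T)>0$, I set $S=\{\mu<\lambda_P(a,T):G_P[a+\mu,T]\le 0\}$. This set is nonempty because for $\mu$ sufficiently negative the first eigenvalue $\lambda_P(a,T)-\mu$ is large and a direct maximum-principle/disconjugacy argument (the base case of deeply negative potentials) gives $G_P[a+\mu,T]\le0$ through Lemma~\ref{l-Green-comparison}. On the connected interval $(-\infty,\lambda_P(a,T))$ no periodic eigenvalue is crossed, so $G_P[a+\mu,T]$ exists and varies continuously; $S$ is closed by continuity and open because Lemma~\ref{l-Green-neg} upgrades $G_P\le0$ to a strict inequality off the diagonal and corners, a condition stable under small perturbations. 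Hence $S$ fills the whole interval and, in particular, $G_P[a,T]\le0$. For $(\Rightarrow)$, if $G_P[a,T]\le0$ then $0$ is not a periodic eigenvalue; were $\lambda_P(a,T)<0$, the positive eigenfunction $\varphi_0$ would satisfy $L[a]\varphi_0=-\lambda_P(a,T)\,\varphi_0\succ0$ with $\varphi_0$ periodic, forcing $\varphi_0<0$ by Lemma~\ref{l-Green-comparison}, a contradiction; thus $\lambda_P(a,T)>0$.

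For part (2), the implication $\lambda_P(a,T)<0$ in $(\Rightarrow)$ is immediate: $G_P[a,T]\ge0$ and $G_P[a,T]\not\equiv0$ rule out $\lambda_P(a,T)>0$ by part (1), and $\lambda_P(a,T)=0$ is excluded by existence of the Green's function. The positivity direction $(\Leftarrow)$ would again be a continuation argument, now on $(\lambda_P(a,T),\lambda_A(a,T)]$, where I expect $G_P[a+\mu,T]\ge0$, seeded by the sign flip occurring as $\mu$ crosses the simple eigenvalue $\lambda_P(a,T)$ (the resolvent there picks up the dominant rank-one positive term $\varphi_0\otimes\varphi_0$). The delicate point, and the genuine obstacle, is the upper threshold $\lambda_A(a,T)$: it is an anti-periodic, not a periodic, eigenvalue, so $G_P[a+\mu,T]$ does \emph{not} blow up there, and yet positivity must be exactly lost at $\mu=\lambda_A(a,T)$; consequently the naive ``sign changes only at eigenvalues'' heuristic fails and a new mechanism is needed.

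To handle this threshold I would exploit the doubled interval $J=[0,2T]$ already introduced in the notation. Extending $a$ to a $T$-periodic potential $\bar a$ on $J$, the periodic spectrum on $J$ is the union of the periodic and anti-periodic spectra on $I$, so $\lambda_A(a,T)$ becomes a genuine periodic eigenvalue on $J$, detected by the $2T$-periodic eigenfunction obtained by extending the sign-changing first anti-periodic eigenfunction $\psi$ on $I$ via $\tilde\psi(t)=-\psi(t-T)$ on $[T,2T]$. This sign-changing solution is the obstruction: combined with Lemma~\ref{l-Green-comparison} applied to the doubled problem it shows that $G_P$ cannot stay of one sign once $\mu$ passes $\lambda_A(a,T)$, while for $\mu\le\lambda_A(a,T)$ the comparison and intermediate-value Lemmas~\ref{l-Green-dec} and~\ref{l-Green-int} propagate positivity from just above $\lambda_P(a,T)$. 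The hardest step will be making this doubling reduction rigorous — relating the sign of $G_P[a,T]$ on $I\times I$ to that of $G_P[\bar a,2T]$ on $J\times J$ and verifying that the anti-periodic eigenfunction genuinely destroys positivity — after which part (2) collapses to the part (1) analysis carried out on $J$.
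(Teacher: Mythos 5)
The paper offers no proof to compare against here: Lemma \ref{l-zhang-eigen} is quoted verbatim from Zhang's paper, so your proposal has to be judged as a reconstruction of that theorem. Your part (1) is essentially sound: the continuation argument works because Lemma \ref{l-Green-neg} upgrades $G_P\le 0$ to strict negativity on the compact square (this is what gives openness), closedness follows from continuity of $\mu\mapsto G_P[a+\mu,T]$ on $(-\infty,\lambda_P(a,T))$, and the converse, testing the maximum principle of Lemma \ref{l-Green-comparison} against the positive first periodic eigenfunction, is the standard argument. (One minor repair: for $a\in L^1(I)$ unbounded above you cannot choose $\mu$ with $a+\mu\le 0$, so the base case should instead use that $\|(a+\mu)_+\|_{L^1}\to 0$ as $\mu\to-\infty$ together with a Lyapunov-type criterion for the periodic maximum principle.) In part (2), however, your forward implication only yields $\lambda_P(a,T)<0$; the inequality $0\le\lambda_A(a,T)$, and the entire reverse implication, hinge on the claim that nonnegativity of $G_P[a+\mu,T]$ persists exactly up to $\mu=\lambda_A(a,T)$, and neither of your two mechanisms for this works.

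First, propagating nonnegativity across $(\lambda_P(a,T),\lambda_A(a,T)]$ with Lemmas \ref{l-Green-dec} and \ref{l-Green-int} is circular: both lemmas require the constant sign to be known at \emph{both} ends of the potential range, and the sign at $\lambda_A(a,T)$ is precisely what must be proven; nor can an open--closed argument substitute for them, since in the nonnegative case there is no analogue of Lemma \ref{l-Green-neg} (by Lemma \ref{l-per} nonnegative periodic Green's functions may vanish), so the set $\{\mu:G_P[a+\mu,T]\ge 0\}$ need not be open. Second, and more seriously, the doubling reduction is not just unproven but false. The correct identity for the $T$-periodic extension $\bar a$ of $a$ to $J$ is $G_P[a,T](t,s)=G_P[\bar a,2\,T](t,s)+G_P[\bar a,2\,T](t,s+T)$, which transfers sign information only from $J$ to $I$. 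The converse transfer cannot hold: since $\Lambda_P[\bar a,2\,T]=\Lambda_P[a,T]\cup\Lambda_A[a,T]$ (your own observation), the value $\mu=\lambda_A(a,T)$ is a \emph{periodic eigenvalue of the doubled problem}, so $G_P[\bar a+\mu,2\,T]$ does not even exist there, while the lemma asserts $G_P[a+\lambda_A,T]\ge 0$. Moreover, nonnegativity on $J$ is already lost at the first anti-periodic eigenvalue of the doubled problem, i.e.\ at the first root of $\Delta(\lambda)=0$ (where $\Delta$ is the discriminant of Theorem \ref{t-oscilacion}), which lies strictly inside $(\lambda_P(a,T),\lambda_A(a,T))$. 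The constant case of Example \ref{ej-cte} makes this concrete: $G_P[m^2,T]\ge 0$ for $0<m\le\pi/T$, whereas $G_P[m^2,2\,T]$ changes sign as soon as $m>\pi/(2\,T)$. Hence part (2) on $I$ cannot ``collapse to the part (1) analysis carried out on $J$'' (part (1) concerns nonpositivity anyway, which the sum identity transports in the wrong direction for your purpose), and Lemma \ref{l-Green-comparison} cannot be applied to the extended eigenfunction $\tilde\psi$, because $L[\bar a+\mu]\tilde\psi=(\mu-\lambda_A)\tilde\psi$ changes sign while the lemma requires one-signed forcing. The missing ingredient is the genuinely different mechanism at the threshold: the zeros of the first anti-periodic eigenfunction are spaced exactly $T$ apart, so for $\mu>\lambda_A(a,T)$ Sturm comparison forces every nontrivial solution of $u''+(a+\mu)u=0$ to vanish in every window of length $T$, and combining such a zero with the structure result of Proposition \ref{p-Gpos}/Lemma \ref{l-per} destroys nonnegativity. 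That step, the core of Zhang's proof, is absent from your proposal.
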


By introducing the parametrized potentials $a+\lambda$, with $\lambda \in \R$, the previous result could be rewritten as follows
\begin{lemma} \cite[Theorem 1.2]{zhang}
\label{l-zhang-eigen-2}
Suppose that $a \in L^1(I)$, then:
\begin{enumerate}
\item $G_P[a+\lambda,T](t,s) \le 0$ on $I \times I$ if and only if $\lambda < {\lambda}_P(a,T)$.
\item $G_P[a+\lambda,T](t,s) \ge 0$ on $I \times I$ if and only if ${\lambda}_P(a,T)< \lambda \le {\lambda}_A(a,T)$.
\end{enumerate}
\end{lemma}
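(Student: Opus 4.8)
The plan is to treat this statement for what the surrounding text already advertises it to be: a direct reformulation of Lemma~\ref{l-zhang-eigen} obtained by the substitution $a \mapsto a+\lambda$. The only genuine content to supply is a translation (shift) identity for the principal eigenvalues, after which both equivalences follow by unwinding the corresponding equivalences in Lemma~\ref{l-zhang-eigen}.

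First I would establish the shift identity for the smallest periodic eigenvalue. By definition, $\lambda_P(a+\lambda,T)$ is the smallest $\mu \in \R$ for which
\[
u''(t)+\bigl(a(t)+\lambda+\mu\bigr)\,u(t)=0, \quad \mbox{a. e. } t\in I, \qquad u(0)=u(T),\; u'(0)=u'(T),
\]
admits a nontrivial solution. Since this is precisely the periodic eigenvalue problem for the potential $a$ evaluated at parameter $\lambda+\mu$, the minimal such $\mu$ is the one making $\lambda+\mu=\lambda_P(a,T)$. Hence
\[
\lambda_P(a+\lambda,T)=\lambda_P(a,T)-\lambda,
\]
and the identical argument under anti-periodic boundary conditions gives
\[
\lambda_A(a+\lambda,T)=\lambda_A(a,T)-\lambda.
\]
Note that $a+\lambda\in L^1(I)$ whenever $a\in L^1(I)$, so Lemma~\ref{l-zhang-eigen} is applicable to the shifted potential.

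Then I would simply apply Lemma~\ref{l-zhang-eigen} with $a$ replaced by $a+\lambda$. For claim (1), $G_P[a+\lambda,T]\le 0$ on $I\times I$ is equivalent to $\lambda_P(a+\lambda,T)>0$, which by the shift identity reads $\lambda_P(a,T)-\lambda>0$, i.e. $\lambda<\lambda_P(a,T)$. For claim (2), $G_P[a+\lambda,T]\ge 0$ is equivalent to $\lambda_P(a+\lambda,T)<0\le\lambda_A(a+\lambda,T)$, which becomes $\lambda_P(a,T)-\lambda<0\le\lambda_A(a,T)-\lambda$, that is $\lambda_P(a,T)<\lambda\le\lambda_A(a,T)$.

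I do not expect any substantial obstacle here: the statement is a corollary rather than an independent result. The single point deserving a careful word is the rigorous justification of the translation identity, namely that adding a constant $\lambda$ to the potential merely translates the periodic (resp. anti-periodic) spectrum by $-\lambda$; this is immediate from the fact that the boundary conditions are unaffected by the constant shift and that the eigenvalue equation for $a+\lambda$ at parameter $\mu$ coincides verbatim with that for $a$ at parameter $\lambda+\mu$. Everything else is bookkeeping on the inequalities inherited from Lemma~\ref{l-zhang-eigen}.
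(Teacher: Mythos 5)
Your proposal is correct and coincides with the paper's own (implicit) justification: the paper presents Lemma~\ref{l-zhang-eigen-2} as a mere rewriting of Lemma~\ref{l-zhang-eigen} via the parametrized potentials $a+\lambda$, which is exactly your argument once the translation identities $\lambda_P(a+\lambda,T)=\lambda_P(a,T)-\lambda$ and $\lambda_A(a+\lambda,T)=\lambda_A(a,T)-\lambda$ are observed. Your making the spectral shift explicit is the only content needed, and you supply it correctly.
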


On the other hand, the following Oscillation Theorem establishes a certain relation of order between the eigenvalues of the equation
\begin{equation}\label{e-hill}
u''(t)+(a(t)+\lambda)\,u(t)=0
\end{equation}
associated to periodic and anti-periodic problems.

\begin{theorem}[Oscillation] \cite[Chapter 2]{magnus} \label{t-oscilacion} Denote
$$\lambda_0, \,\lambda_1, \, \lambda_2 \dots \quad \text{and} \quad \lambda'_1,\, \lambda'_2,\, \lambda'_3 \dots$$
as the eigenvalues of \eqref{e-hill-intro} associated to periodic and anti-periodic boundary conditions, respectively. Then
$$\lambda_0<\lambda'_1\le \lambda'_2<\lambda_1 \le \lambda_2<\lambda'_3\le \lambda'_4<\lambda_3\le \lambda_4 \dots$$

Moreover, the eigenvalues $\lambda_n$ are characterized as the infinite roots of the equation $\Delta(\lambda)=2$ and the eigenvalues $\lambda'_n$ as the roots of $\Delta(\lambda)=-2$, where
$$\Delta(\lambda)=y_1(T,\lambda)+y'_2(T,\lambda)$$
and $y_1$, $y_2$ are the solutions of \eqref{e-hill} satisfying the initial conditions
$$y_1(0,\lambda)=1, \ y'_1(0,\lambda)=0,$$
$$y_2(0,\lambda)=0, \ y'_2(0,\lambda)=1.$$

The trivial solution of \eqref{e-hill-intro} is stable if and only if $\lambda$ belongs to the intervals
$$(\lambda_0, \lambda'_1),\, (\lambda'_2, \lambda_1),\, (\lambda_2, \lambda'_3),\, (\lambda'_4, \lambda_3), \dots$$
\end{theorem}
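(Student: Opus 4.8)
The plan is to ground the entire argument in Floquet theory for the periodic equation \eqref{e-hill} together with a careful study of the discriminant $\Delta(\lambda)$ as an entire function of the spectral parameter. First I would introduce the monodromy (transfer) matrix
$$M(\lambda)=\begin{pmatrix} y_1(T,\lambda) & y_2(T,\lambda)\\ y_1'(T,\lambda) & y_2'(T,\lambda)\end{pmatrix}.$$
Since \eqref{e-hill} has no first order term, the Wronskian of $y_1,y_2$ is constant and equal to $1$, so $\det M(\lambda)=1$ for every $\lambda$. A solution $u$ with $(u(T),u'(T))=\rho\,(u(0),u'(0))$ is then an eigenvector of $M(\lambda)$ with Floquet multiplier $\rho$, and the multipliers solve $\rho^2-\Delta(\lambda)\,\rho+1=0$ with $\Delta(\lambda)=\operatorname{tr}M(\lambda)=y_1(T,\lambda)+y_2'(T,\lambda)$. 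In particular the periodic conditions $u(0)=u(T)$, $u'(0)=u'(T)$ force $\rho=1$, hence $\Delta(\lambda)=2$, and the anti-periodic conditions force $\rho=-1$, hence $\Delta(\lambda)=-2$; this identifies the $\lambda_n$ and $\lambda_n'$ as the roots of $\Delta=2$ and $\Delta=-2$ respectively. Boundedness of all solutions, i.e.\ stability, is equivalent to both multipliers lying on the unit circle, that is, to $|\Delta(\lambda)|\le 2$, with strict stability precisely when $|\Delta(\lambda)|<2$.

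Next I would record the analytic properties of $\Delta$. Continuous and analytic dependence of the solutions of a linear ODE on a parameter shows that $\lambda\mapsto y_1(T,\lambda)$ and $\lambda\mapsto y_2'(T,\lambda)$ are entire, hence so is $\Delta$, and it is real for real $\lambda$. For the asymptotics as $\lambda\to-\infty$ the equation is dominated by $u''\approx|\lambda|\,u$, so the solutions grow hyperbolically and $\Delta(\lambda)\sim 2\cosh(\sqrt{|\lambda|}\,T)\to+\infty$; as $\lambda\to+\infty$ the solutions oscillate like $\cos(\sqrt{\lambda}\,t)$ and $\Delta(\lambda)\sim 2\cos(\sqrt{\lambda}\,T)$, so $\Delta$ crosses the lines $\pm2$ infinitely often. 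Thus $\Delta-2$ and $\Delta+2$ each possess infinitely many real zeros, and $\Delta$ starts above $2$ and subsequently oscillates into and out of the strip $|\Delta|\le 2$.

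The core of the proof is the interlacing $\lambda_0<\lambda_1'\le\lambda_2'<\lambda_1\le\lambda_2<\lambda_3'\le\cdots$ together with the correct multiplicities. For this I would differentiate the solutions in $\lambda$: writing $\dot y=\partial y/\partial\lambda$ and using $\dot y''+(a+\lambda)\,\dot y=-y$, a variation of parameters computation yields an integral representation for $\Delta'(\lambda)$ in terms of $y_1,y_2$. The crucial consequence is a sign rule at the eigenvalues: when $\Delta(\lambda^*)=\pm2$ and the equation at $\lambda^*$ has a one dimensional space of periodic (resp.\ anti-periodic) solutions, $\Delta'(\lambda^*)\neq 0$ and $\Delta$ crosses $\pm2$ transversally, whereas $\Delta$ can be tangent to $+2$ or $-2$ only when $\rho=\pm1$ is a double multiplier, which is exactly the coexistence case producing a pair of equal eigenvalues and accounts for the $\le$ signs in the chain. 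Combining this local information with the global shape from the previous paragraph forces the alternating pattern: $\Delta$ descends through $2$ at $\lambda_0$, crosses the strip down to $-2$ at $\lambda_1'$, makes a single excursion below $-2$ before returning at $\lambda_2'$, rises through the strip to $2$ at $\lambda_1$, excurses above $2$ and returns at $\lambda_2$, and so on. The main obstacle is precisely this step, namely controlling the critical points of $\Delta$ so as to prove that each stability band is crossed exactly once and each gap contains exactly one excursion beyond $\pm2$; I expect to close it by supplementing the sign analysis of $\Delta'$ with the fact that $\Delta\mp2$ are entire of order $1/2$, so that a Hadamard type count of their zeros matches the oscillation count dictated by the large $\lambda$ asymptotics and no spurious re-entries into the strip are possible.

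Finally, the stability statement follows by reading off the sign of $\Delta^2-4$: the boundedness intervals are exactly the open intervals on which $|\Delta(\lambda)|<2$, namely $(\lambda_0,\lambda_1')$, $(\lambda_2',\lambda_1)$, $(\lambda_2,\lambda_3')$, $(\lambda_4',\lambda_3),\dots$, while on the complementary closed gaps one multiplier has modulus greater than $1$ and the general solution is unbounded. This recovers the stated list of stability intervals and completes the theorem.
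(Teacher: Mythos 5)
First, a point of comparison: the paper contains no proof of this theorem at all --- it is quoted as a classical result from Magnus and Winkler \cite[Chapter 2]{magnus} --- so your proposal is really an attempt to reconstruct the proof of the cited source, and your framework is indeed the same one: the monodromy matrix $M(\lambda)$ with $\det M=1$, multipliers solving $\rho^2-\Delta(\lambda)\rho+1=0$, the identification of periodic and anti-periodic eigenvalues with the roots of $\Delta=2$ and $\Delta=-2$, and the entirety plus the $2\cosh(\sqrt{|\lambda|}\,T)$ / $2\cos(\sqrt{\lambda}\,T)$ asymptotics of $\Delta$. Those parts of your argument are correct.

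The genuine gap is the interlacing chain $\lambda_0<\lambda'_1\le\lambda'_2<\lambda_1\le\lambda_2<\cdots$, which is the actual content of the theorem, and which you yourself leave open (``I expect to close it''). The concrete missing ingredient is the following lemma: differentiating the fundamental solutions in $\lambda$ and using variation of parameters, one gets
$$\Delta'(\lambda)=-\int_0^T\Bigl\{y_2(T)\,y_1^2(s)+\bigl[y_2'(T)-y_1(T)\bigr]\,y_1(s)\,y_2(s)-y_1'(T)\,y_2^2(s)\Bigr\}\,ds,$$
and the discriminant of the quadratic form in $\bigl(y_1(s),y_2(s)\bigr)$ under the integral equals $\Delta(\lambda)^2-4$ (here $\det M=1$ is used). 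Hence the form is definite, and $\Delta'(\lambda)\neq 0$, whenever $|\Delta(\lambda)|<2$: $\Delta$ is strictly monotone across each stability band, which is exactly what excludes the ``spurious re-entries'' you worry about. Moreover, if $\Delta(\lambda^*)=\pm2$ and $\Delta'(\lambda^*)=0$, the semidefiniteness forces $y_2(T)=y_1'(T)=0$, i.e.\ $M=\pm I$ (coexistence, a double eigenvalue), and then $\Delta''(\lambda^*)\neq0$ with the sign that makes $\Delta$ turn back rather than cross; this is what produces the $\le$ signs. Your proposed substitute --- a Hadamard-type zero count for the order-$1/2$ entire functions $\Delta\mp2$ --- is not carried out and cannot by itself attach the periodic/anti-periodic labels in the stated alternating order; you would still need the monotonicity and tangency analysis above. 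Two smaller inaccuracies: boundedness of all solutions is \emph{not} equivalent to $|\Delta|\le2$ (when $|\Delta|=2$ and $M\neq\pm I$ a Jordan block yields linearly growing solutions), and at the endpoints of the instability gaps it is false that ``one multiplier has modulus greater than $1$''; both points matter for the ``only if'' direction of the stability statement.
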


Graphically, the function $\Delta(\lambda)$ has the following appearance
\begin{center}
\begin{tikzpicture}
\draw[->] (-2,0) -- (7.3,0) node[right] {$\lambda$};
\draw[->] (0,-1.7) -- (0,2.4) node[left] {$\Delta(\lambda)$};
\draw[red] (-2,1) -- (7,1) node[right] {$\Delta(\lambda)=2$};
\draw[red] (-2,-1) -- (7,-1) node[right] {$\Delta(\lambda)=-2$};
\draw [blue, thick] (-1.8,2.3) to [out=-60,in=170] (0,-1.2) to [out=0,in=180] (1.5,1.2) to [out=10,in=180] (3,-1.1) to [out=0,in=180] (4.5,1.2) to [out=0,in=180] (6.1,-1.2) to [out=0,in=240] (6.9,0.6);
\draw[fill] (-1.36,1) circle (0.06cm) node[above] {$\lambda_0$};
\draw[fill] (1.05,1) circle (0.06cm) node[above] {$\lambda_1$};
\draw[fill] (1.96,1) circle (0.06cm) node[above] {$\lambda_2$};
\draw[fill] (4.06,1) circle (0.06cm) node[above] {$\lambda_3$};
\draw[fill] (4.95,1) circle (0.06cm) node[above] {$\lambda_4$};
\draw[fill] (-0.54,-1) circle (0.06cm) node[below] {$\lambda'_1$};
\draw[fill] (0.447,-1) circle (0.06cm) node[below] {$\lambda'_2$};
\draw[fill] (2.66,-1) circle (0.06cm) node[below] {$\lambda'_3$};
\draw[fill] (3.306,-1) circle (0.06cm) node[below] {$\lambda'_4$};
\draw[fill] (5.65,-1) circle (0.06cm) node[below] {$\lambda'_5$};
\draw[fill] (6.49,-1) circle (0.06cm) node[below] {$\lambda'_6$};
\end{tikzpicture}
\end{center}

On the other hand, denoting ${\lambda}_D(a,T)$ as the smallest eigenvalue of the Dirichlet problem
$$
u''(t)+(a(t) + \lambda)\, u(t)=0,\quad \mbox{ a. e. }\;t\in I,
\quad u(0)=u(T)=0,
$$
and $G_D[a+\lambda,T]$ as the corresponding Green's function, we have the following very well known result

\begin{lemma}\label{l-coppel} 
Suppose that $a \in L^1(I)$, then:

 $G_D[a+\lambda,T](t,s)<0$ for all $(t,s) \in (0,T) \times (0,T)$ if and only if $\lambda< \lambda_D(a,T)$.
\end{lemma}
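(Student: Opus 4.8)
The plan is to prove both implications by combining the classical Sturm oscillation theory (which characterises the first Dirichlet eigenvalue through disconjugacy) with the explicit representation of the Dirichlet Green's function in terms of two distinguished solutions of the homogeneous equation. Throughout, $a\in L^1(I)$ guarantees that the initial value problems for \eqref{e-hill} have unique solutions in $W^{2,1}(I)$ depending continuously on $\lambda$, so that the solutions $y_1(\cdot,\lambda)$, $y_2(\cdot,\lambda)$ of Theorem \ref{t-oscilacion} and their zeros behave as in the regular Sturm--Liouville case.

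For the sufficiency, assume $\lambda<\lambda_D(a,T)$. Set $\phi(t)=y_2(t,\lambda)$, which satisfies $\phi(0)=0$, $\phi'(0)=1$, and let $\psi(\cdot,\lambda)$ be a nontrivial solution of \eqref{e-hill} with $\psi(T,\lambda)=0$. Since $\lambda_D(a,T)$ is the smallest value of the parameter for which \eqref{e-hill} admits a nontrivial solution vanishing at both endpoints, Sturm's comparison theorem shows that for $\lambda<\lambda_D(a,T)$ the first zero of $\phi$ after $0$ lies strictly beyond $T$; hence \eqref{e-hill} is disconjugate on $I$, $\phi>0$ on $(0,T]$ and, symmetrically, $\psi>0$ on $[0,T)$. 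In particular $\phi$ and $\psi$ are linearly independent (as $\phi(T)>0=\psi(T)$), so $\lambda$ is not a Dirichlet eigenvalue and $G_D[a+\lambda,T]$ exists, with the standard expression
\[
G_D[a+\lambda,T](t,s)=\frac{1}{W}\,\phi(\min\{t,s\})\,\psi(\max\{t,s\}),
\]
where $W=\phi(t)\,\psi'(t)-\phi'(t)\,\psi(t)$ is constant in $t$. Evaluating $W$ at $t=T$ gives $W=\phi(T)\,\psi'(T)<0$, while $\phi(\min\{t,s\})>0$ and $\psi(\max\{t,s\})>0$ for $(t,s)\in(0,T)\times(0,T)$; therefore $G_D[a+\lambda,T]<0$ on $(0,T)\times(0,T)$.

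For the necessity I argue by contraposition and suppose $\lambda\ge\lambda_D(a,T)$. If $\lambda=\lambda_D(a,T)$, then $\lambda$ is a Dirichlet eigenvalue, the operator is resonant and $G_D[a+\lambda,T]$ does not exist. If $\lambda>\lambda_D(a,T)$ and $G_D[a+\lambda,T]$ exists, let $\varphi\succ0$ be the principal Dirichlet eigenfunction at $\lambda_D(a,T)$, so that $L[a+\lambda_D(a,T)]\varphi=0$. Assuming, for contradiction, that $G_D[a+\lambda,T]<0$ on $(0,T)\times(0,T)$, the function $u(t)=\int_0^T G_D[a+\lambda,T](t,s)\,\varphi(s)\,ds$ is the solution of $L[a+\lambda]u=\varphi$ with $u(0)=u(T)=0$, and since $\varphi\succ0$ it satisfies $u<0$ on $(0,T)$. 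Multiplying $L[a+\lambda]u=\varphi$ by $\varphi$, integrating by parts twice (all boundary terms vanish), and using $L[a+\lambda_D(a,T)]\varphi=0$, one obtains $(\lambda-\lambda_D(a,T))\int_0^T u\,\varphi=\int_0^T\varphi^2>0$. This forces $\int_0^T u\,\varphi>0$, contradicting $u<0$ and $\varphi>0$ on $(0,T)$. Hence $G_D[a+\lambda,T]$ cannot be negative on the whole square, which proves the contrapositive.

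The crux of the argument is the Sturm comparison step in the sufficiency part: namely that $\lambda<\lambda_D(a,T)$ is exactly equivalent to the disconjugacy of \eqref{e-hill} on $I$, together with the strict positivity of $\phi$ up to $t=T$ and of $\psi$ down to $t=0$ (the behaviour at the endpoints is what upgrades $G_D\le0$ to the strict inequality on the open square). These are classical facts of one-dimensional oscillation theory and may be invoked from Coppel's treatment of disconjugacy; the remaining computations (the Wronskian sign and the integration by parts) are routine. An alternative, entirely oscillation-theoretic, proof of necessity notes that for $\lambda>\lambda_D(a,T)$ the solution $\phi=y_2(\cdot,\lambda)$ acquires a zero inside $(0,T)$, so that $\phi(\min\{t,s\})$ changes sign and $G_D[a+\lambda,T]$ fails to be of constant sign.
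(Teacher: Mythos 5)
Your proof is correct, but there is nothing in the paper to compare it against: the paper does not prove Lemma~\ref{l-coppel} at all, stating it as a ``very well known result'' whose label and bibliography point to classical disconjugacy theory (Coppel \cite{Cop}). What you have done is reconstruct that classical argument and make it self-contained. Your sufficiency half is the standard route: $\lambda<\lambda_D(a,T)$ forces disconjugacy of $u''+(a(t)+\lambda)u=0$ on $I$ --- the Sturm comparison step is legitimate, though it silently uses that the principal eigenfunction at $\lambda_D(a,T)$ has no interior zeros --- and then the kernel representation $G_D[a+\lambda,T](t,s)=\phi(\min\{t,s\})\,\psi(\max\{t,s\})/W$ with $W=\phi(T)\,\psi'(T)<0$ gives strict negativity on the open square. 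The necessity half is where you add genuine value over the usual oscillation argument: testing $L[a+\lambda]u=\varphi$ against the principal eigenfunction $\varphi$ and integrating by parts yields $(\lambda-\lambda_D(a,T))\int_0^T u\,\varphi=\int_0^T\varphi^2>0$, incompatible with $u<0$ on $(0,T)$; this Fredholm-type identity is clean, works verbatim in the $L^1$ Carath\'eodory setting of the paper, and avoids any delicacy about zeros of solutions, whereas the purely oscillatory alternative you sketch at the end (a zero $t_0\in(0,T)$ of $y_2(\cdot,\lambda)$ forces $G_D(t_0,s)=0$ for $s>t_0$) is also valid but terser. Two points of hygiene, both of which you essentially handle: fix the sign normalization of $\psi$ explicitly (choose $\psi>0$ on $[0,T)$, which disconjugacy permits), and phrase the case $\lambda=\lambda_D(a,T)$ as failure of existence of the Green's function rather than failure of its sign. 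In short: your argument is a correct, self-contained proof of a statement the paper only cites.
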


We finalize this preliminary section by showing two particular cases of some more general spectral results given in \cite[Lemmas 1.8.25 and 1.8.33]{Cab}.

\begin{lemma}\label{l-M-NT}
Suppose that operator $L[a]$ is nonresonant in the Banach space $X$, its related Green's function $G[a,T]$ is nonpositive on $I \times I$, and satisfies condition
\begin{enumerate}
\item[$(N_g)$] There is a continuous function $\phi(t) >0$ for all $t \in (0,T)$ and $k_1, \; k_2 \in L^1(I)$, such that $k_1(s)<k_2(s)<0$ for a.e. $s \in I$, satisfying
$$ \phi(t)\, k_1(s) \le  G[a,T](t,s) \le \phi(t)\, k_2(s), \quad \mbox{for a.e. } (t,s)  \in I \times I.$$
\end{enumerate}

Then $G[a+\lambda,T]$ is nonpositive on $I \times I$ if and only if $\lambda\in (-\infty, \lambda_1(a,T))$ or  $\lambda\in [-\bar{\mu}(a,T), \lambda_1(a,T))$, with $\lambda_1(a,T) >0$ the first eigenvalue of operator $L[a]$ in $X$ and ${\bar{\mu}(a,T) \ge 0}$ such that $L[a -\bar{\mu}]$ is nonresonant in $X$ and the related nonpositive Green's function $G[a -\bar{\mu},T]$ vanishes at some point of the square $I\times I$.
\end{lemma}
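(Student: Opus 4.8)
The plan is to reduce the statement to a monotonicity-plus-spectral analysis of the one-parameter family $G_\lambda:=G[a+\lambda,T]$, exploiting that condition $(N_g)$ forces the Green operator to be strongly positive, so that the Krein--Rutman machinery underlying the general results \cite[Lemmas 1.8.25 and 1.8.33]{Cab} applies. First I would pass to the integral operator $\mathcal{G}\,\sigma(t)=\int_0^T G[a,T](t,s)\,\sigma(s)\,ds$ and set $K=-\mathcal{G}$. Since $G[a,T]\le 0$, the operator $K$ is positive and compact, and the two-sided bound $\phi(t)\,k_1(s)\le G[a,T](t,s)\le \phi(t)\,k_2(s)<0$ of $(N_g)$ says precisely that $K$ is $u_0$-positive with $u_0=\phi$. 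Krein--Rutman then gives that the spectral radius $r(K)>0$ is a simple eigenvalue whose eigenfunction has the sign of $\phi$. Because a value $\lambda$ is resonant for $L[a+\lambda]$ exactly when $-1/\lambda$ is an eigenvalue of $\mathcal{G}$, equivalently $1/\lambda$ an eigenvalue of $K$, the largest eigenvalue $r(K)=1/\lambda_1(a,T)$ encodes the first eigenvalue $\lambda_1(a,T)>0$ of $L[a]$ in $X$.

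Next I would identify the set $\Lambda=\{\lambda\in\R:\;G_\lambda \text{ exists and } G_\lambda\le 0 \text{ on } I\times I\}$ as an interval with right endpoint $\lambda_1(a,T)$. Convexity of $\Lambda$ is immediate from the tools already at hand: if $\lambda'<\lambda''$ both lie in $\Lambda$, then $a+\lambda\in[a+\lambda',a+\lambda'']$ for each intermediate $\lambda$, so Lemma \ref{l-Green-int} guarantees that $G_\lambda$ exists and keeps the same nonpositive sign, whence $[\lambda',\lambda'']\subseteq\Lambda$. For the right endpoint I would use the strict monotonicity of Lemma \ref{l-Green-dec}, namely $G_{\lambda''}<G_{\lambda'}\le 0$ whenever $\lambda'<\lambda''$ remain in the nonpositive range: starting at $\lambda=0$ (where $G[a,T]\le 0$ by hypothesis) the family only becomes more negative as $\lambda$ grows and hence cannot touch zero, so it stays nonpositive up to the value at which $\mathcal{G}$ fails to be invertible, that is $\lambda=\lambda_1(a,T)$, where $L[a+\lambda]$ is resonant and $G_\lambda$ ceases to exist. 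This yields $[0,\lambda_1(a,T))\subseteq\Lambda$ and $\lambda_1(a,T)=\sup\Lambda$, the value not being attained.

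It remains to describe the left endpoint, which is exactly where the two alternatives of the statement appear. Letting $\lambda$ decrease below $0$, strict monotonicity now makes $G_\lambda$ less negative, so there are precisely two possibilities. Either $G_\lambda$ stays nonpositive for all $\lambda\to-\infty$, in which case $\Lambda=(-\infty,\lambda_1(a,T))$; or there is a first finite value $\lambda=-\bar{\mu}(a,T)$ at which the still-existing, nonpositive Green's function $G[a-\bar{\mu},T]$ vanishes at some point of $I\times I$, below which $G_\lambda$ acquires positive values and leaves $\Lambda$. By the very definition of $\bar{\mu}(a,T)$ the operator $L[a-\bar{\mu}]$ is nonresonant and $G[a-\bar{\mu},T]\le 0$, so this threshold belongs to $\Lambda$ and $\Lambda=[-\bar{\mu}(a,T),\lambda_1(a,T))$. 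Collecting the two cases gives the claimed characterization.

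I expect the main obstacle to be the left-endpoint analysis: showing that once $G_\lambda$ first touches zero, at a point that $(N_g)$ forces to be interior rather than a corner, it genuinely changes sign for smaller $\lambda$, so that $-\bar{\mu}(a,T)$ is a true boundary of $\Lambda$ and not an isolated degeneracy. The difficulty is that Lemma \ref{l-Green-dec} applies only while the sign is constant, so the transversality of the vanishing in the parameter $\lambda$ must be argued separately; this is precisely the delicate point handled in \cite[Lemma 1.8.25]{Cab}, and the role of the factorized bound $\phi(t)\,k_i(s)$ is to supply the $u_0$-positivity that renders the sign transition monotone and irreversible. Verifying that our hypotheses match those of that general result is therefore the crux, after which the statement follows by specialization.
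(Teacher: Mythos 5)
You should first be aware that the paper contains no proof of this lemma at all: it is introduced by the sentence ``We finalize this preliminary section by showing two particular cases of some more general spectral results given in \cite[Lemmas 1.8.25 and 1.8.33]{Cab}'', i.e.\ it is imported as a known result. So your closing move --- check that $(N_g)$ places us in the hypotheses of \cite[Lemma 1.8.25]{Cab} and specialize --- coincides with the paper's entire treatment of the statement, and to that extent there is nothing to compare.

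Judged as a self-contained argument, however, your middle section has a genuine circularity. Lemmas \ref{l-Green-dec} and \ref{l-Green-int} both \emph{presuppose} that the Green's functions at the two potentials being compared already have the same constant sign, so they cannot be used to propagate nonpositivity forward in $\lambda$: your sentence ``the family only becomes more negative as $\lambda$ grows and hence cannot touch zero'' assumes exactly what has to be proved, namely that $G[a+\lambda,T]$ still has constant sign at the larger parameter. A priori the sign could be lost at some $\lambda^*<\lambda_1(a,T)$ with no resonance occurring, since continuity in $\lambda$ does not exclude small positive values appearing near points where $G[a+\lambda^*,T]$ vanishes --- for instance near $t\in\{0,T\}$, where $\phi$ may vanish and $(N_g)$ gives no uniform negativity. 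The correct mechanism for the right endpoint is the operator-theoretic one you set up and then abandoned: with $K=-\mathcal{G}$, the solution operator of $L[a+\lambda]$ in $X$ is $-(\mathrm{Id}-\lambda K)^{-1}K$, so nonpositivity of $G[a+\lambda,T]$ amounts to positivity of $(\mathrm{Id}-\lambda K)^{-1}K$, and for $0\le\lambda<1/r(K)=\lambda_1(a,T)$ this follows from the Neumann series $\sum_{n\ge 0}\lambda^n K^{n+1}$, all of whose terms are positive operators; for $\lambda<0$ that series alternates in sign, which is precisely why the left endpoint is delicate, needs the $u_0$-positivity encoded in $(N_g)$, and produces $\bar\mu(a,T)$. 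Your treatment of that left endpoint (either nonpositivity persists for all $\lambda<0$, or there is a first touching value $-\bar\mu(a,T)$ that still belongs to $\Lambda$ and below which the sign is irrevocably lost) is asserted rather than proved, as you yourself concede by deferring it to \cite{Cab}. A smaller final point: Lemmas \ref{l-Green-dec} and \ref{l-Green-int} are compiled in this paper as results on the \emph{periodic} problem (taken from \cite{cacid}), so invoking them for $L[a]$ in an abstract Banach space $X$ requires their analogues in that setting, e.g.\ via the resolvent-type identity expressing the difference of two Green's functions as an integral involving both of them and the difference of the potentials.
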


\begin{lemma}
\label{l-M-PT}
Suppose that operator $L[a]$ is nonresonant in the Banach space $X$, its related Green's function $G[a,T]$ is nonnegative on $I \times I$, and satisfies condition
\begin{enumerate}
\item[$(P_g)$] There is a continuous function $\phi(t) >0$ for all $t \in (0,T)$ and $k_1, \; k_2 \in L^1(I)$, such that $0<k_1(s)<k_2(s)$ for a.e. $s \in I$, satisfying
$$ \phi(t)\, k_1(s) \le  G[a,T](t,s) \le \phi(t)\, k_2(s), \quad \mbox{for a.e. } (t,s)  \in I \times I.$$
\end{enumerate}

Then $G[a+\lambda,T]$ is nonnegative on $I \times I$ if and only if $\lambda\in (\lambda_1(a,T),\infty)$ or  $\lambda\in (\lambda_1(a,T),\\ \bar{\mu}(a,T)]$, with $\lambda_1(a,T) <0$ the first eigenvalue of operator $L[a]$ in $X$ and $\bar{\mu}(a,T) \ge 0$ such that $L[a +\bar{\mu}]$ is nonresonant in $X$ and the related nonnegative Green's function $G[a+\bar{\mu},T]$ vanishes at some point of the square $I\times I$.
\end{lemma}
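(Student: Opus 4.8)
The plan is to prove Lemma~\ref{l-M-PT} as the positive-cone mirror of Lemma~\ref{l-M-NT}, by reinterpreting the statement in terms of the integral operator associated with the nonnegative kernel $G[a,T]$ and exploiting the factorized bound $(P_g)$ as a strong positivity (i.e.\ $u_0$-positivity, with $u_0=\phi$) hypothesis. Concretely, I would set $\mathcal{G}u(t)=\int_0^T G[a,T](t,s)\,u(s)\,ds$, so that $\mathcal{G}=L[a]^{-1}$ on $X$, $\mathcal{G}$ is compact (its range embeds in $C^1(I)$), and the resonance relation $L[a+\lambda]u=0$ is equivalent to $\mathcal{G}u=-\tfrac1\lambda u$; hence the eigenvalues $\lambda$ of $L[a]$ in $X$ are exactly the numbers $-1/\mu$ for $\mu$ an eigenvalue of $\mathcal{G}$. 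First I would record that, since $k_1>0$, the bound $G[a,T](t,s)\ge\phi(t)\,k_1(s)>0$ forces $G[a,T]>0$ for a.e.\ $(t,s)$, and that for every $u\succ 0$ one has $c_1\,\phi(t)\le\mathcal{G}u(t)\le c_2\,\phi(t)$ with $0<c_1\le c_2$; this is precisely $u_0$-positivity of $\mathcal{G}$ on the cone of nonnegative functions.

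The next step is to invoke the Krein--Rutman/Krasnoselskii theory for $u_0$-positive compact operators: $\mathcal{G}$ has a simple eigenvalue equal to its spectral radius $r(\mathcal{G})>0$, carried by a strictly positive eigenfunction $\varphi$ and dominating in modulus every other eigenvalue. Translating back yields the principal eigenvalue $\lambda_1(a,T)=-1/r(\mathcal{G})<0$ of $L[a]$ in $X$, which is the negativity of $\lambda_1$ asserted in the statement. I would then study the set $\Lambda=\{\lambda:G[a+\lambda,T]\text{ exists and is nonnegative on }I\times I\}$ and show it is the claimed interval. Convexity of $\Lambda$ is immediate from Lemma~\ref{l-Green-int}: if $\lambda<\lambda'$ both lie in $\Lambda$, then $a+\lambda'\succ a+\lambda$ with Green's functions of the same nonnegative sign, so every intermediate potential $a+\tilde\lambda$, $\tilde\lambda\in(\lambda,\lambda')$, has a nonnegative Green's function; the monotonicity of Lemma~\ref{l-Green-dec} (raising $\lambda$ lowers $G[a+\lambda,T]$) fixes the direction.

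For the lower endpoint I would analyze the resolvent near $\lambda_1$: because $\lambda_1$ corresponds to a simple eigenvalue, $(L[a]+\lambda)^{-1}$ has a simple pole there with a rank-one residue built from the positive eigenfunctions of $\mathcal{G}$ and $\mathcal{G}^*$, so its kernel behaves like $C\,\varphi(t)\psi(s)/(\lambda-\lambda_1)$ with $C>0$ and $\varphi,\psi>0$. Thus $G[a+\lambda,T]>0$ for $\lambda$ just above $\lambda_1$, and together with $0\in\Lambda$ and convexity this gives $(\lambda_1,0]\subset\Lambda$; since $\lambda_1\notin\Lambda$ (resonance), convexity forces $\lambda<\lambda_1\Rightarrow\lambda\notin\Lambda$, so $\inf\Lambda=\lambda_1$ and is not attained.

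Finally, putting $\bar\mu=\sup\Lambda$, if $\bar\mu=\infty$ then $\Lambda=(\lambda_1,\infty)$; otherwise I would pass to the limit along $\lambda_n\uparrow\bar\mu$, so that the decreasing family $G[a+\lambda_n,T]\ge0$ converges to $G[a+\bar\mu,T]\ge0$. One checks $\bar\mu$ lies strictly below the second eigenvalue, so $L[a+\bar\mu]$ is nonresonant and $\bar\mu\in\Lambda$; moreover $G[a+\bar\mu,T]$ must vanish at some point of $I\times I$, for otherwise strict positivity plus continuity in $\lambda$ would keep $G[a+\lambda,T]\ge0$ for $\lambda$ slightly larger, contradicting $\bar\mu=\sup\Lambda$. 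This identifies $\bar\mu$ with the threshold in the statement and gives $\Lambda=(\lambda_1,\bar\mu]$. The main obstacle I anticipate is precisely the behavior at this upper endpoint: rigorously excluding nonnegativity beyond $\bar\mu$ needs continuity of the Green's function in the parameter together with a uniform-positivity/compactness argument ensuring the first zero genuinely produces a sign change, and confirming that $\bar\mu$ sits strictly below the second eigenvalue so that the endpoint is included.
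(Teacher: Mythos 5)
The paper never proves Lemma~\ref{l-M-PT}: it is stated in the Preliminaries as a quoted special case of \cite[Lemmas 1.8.25 and 1.8.33]{Cab}, so there is no in-paper argument to compare yours against; the relevant benchmark is the proof in that reference, which is precisely the route you sketch --- pass to the integral operator $\mathcal{G}u(t)=\int_0^T G[a,T](t,s)u(s)\,ds$, read $(P_g)$ as $u_0$-positivity with $u_0=\phi$, apply Krein--Rutman/Krasnoselskii to get the principal eigenvalue $\lambda_1(a,T)=-1/r(\mathcal{G})<0$, and show that the set $\Lambda=\{\lambda : G[a+\lambda,T] \mbox{ exists and is nonnegative}\}$ is an interval whose infimum is $\lambda_1$ (not attained) and whose supremum is either $+\infty$ or an attained value $\bar\mu$ at which the Green's function vanishes somewhere. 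Your use of Lemma~\ref{l-Green-int} to get the interval structure, and of resonance at $\lambda_1$ to exclude $(-\infty,\lambda_1]$, is correct. (One caveat: Lemmas~\ref{l-Green-dec} and \ref{l-Green-int} are quoted by the paper from \cite{cacid}, where they are proved for the \emph{periodic} problem; to invoke them for a general space $X$ you need their general versions, also available in \cite{Cab}.)

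The genuine gaps are exactly the two endpoint steps that you flag but defer, and as written neither follows. First, at the lower endpoint: the expansion $G[a+\lambda,T]\approx C\,\varphi(t)\psi(s)/(\lambda-\lambda_1)$ is an operator-norm statement about the resolvent; it does not give \emph{pointwise} nonnegativity of the kernel near points where $\varphi(t)\psi(s)$ vanishes (e.g. near the boundary of $I\times I$ for Dirichlet-type spaces $X$), so the conclusion ``$G[a+\lambda,T]>0$ just above $\lambda_1$'' needs a quantitative use of $(P_g)$ to dominate the holomorphic remainder, which you do not supply. A complete and cleaner argument avoids the residue altogether: for $\lambda\in(\lambda_1,0]$ one has $|\lambda|\,r(\mathcal{G})<1$, so the Neumann series $G[a+\lambda,T]=\sum_{n\ge 0}(-\lambda)^n\,\mathcal{G}^{n+1}$ converges, and every term has a nonnegative kernel (iterated integrals of the nonnegative kernel $G[a,T]$, multiplied by $(-\lambda)^n\ge 0$); this yields $(\lambda_1,0]\subset\Lambda$ directly. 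Second, at the upper endpoint: ``one checks $\bar\mu$ lies strictly below the second eigenvalue'' is the crux of the lemma, not a check. You must rule out that $\sup\Lambda$ coincides with the smallest positive eigenvalue $\lambda^{+}$ of $L[a]$ in $X$, i.e. that $\Lambda=(\lambda_1,\lambda^{+})$ open at the top. The needed argument is: if $G[a+\lambda,T]\ge 0$ as $\lambda\uparrow\lambda^{+}$, the blow-up of the resolvent at $\lambda^{+}$ forces the (suitably rescaled) kernels to converge to the rank-one residue kernel, which must then be nonnegative, forcing a constant-sign eigenfunction at $\lambda^{+}$; by Krasnoselskii's theorem a $u_0$-positive compact operator has a constant-sign eigenfunction only at its principal eigenvalue, i.e. only at $\lambda_1$, a contradiction. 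Only after this is established do nonresonance at $\bar\mu$, continuity of $\lambda\mapsto G[a+\lambda,T]$, and your compactness argument give $\bar\mu\in\Lambda$ and the existence of a zero of $G[a+\bar\mu,T]$. Without these two pieces the proposal establishes only the easy inclusions, not the stated equivalence.
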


It is obvious that if the Green's function is strictly positive (resp. strictly negative) on $I \times I$ then condition $(P_g)$ (resp. $(N_g)$) is trivially fulfilled.
	
\section{Main results: how to decompose Green's functions as combination of periodic ones}	

We will study now different separated boundary conditions, and look for the connection between them and the periodic problem. In \cite{bacaig} some comparison principles are developed for these kind of boundary conditions. There it is proved that the validity of MP or AMP for one boundary condition is deduced from the validity of another one, considering for that some more restrictive hypothesis over the coefficients of the equation. 

Before studying each problem separately, we prove a necessary condition that must be satisfied by the Green's function of a self-adjoint operator. This result generalizes the one obtained for the periodic case in \cite[Lema 2.2]{cacid} and it is valid for periodic, Neumann and Dirichlet problems.

\begin{proposition}\label{p-Gpos}
Assume that operator $L[a]$ is nonresonant and self-adjoint on $X$. If the Green's function $G[a,T]$ does not change sign on $I\times I$ and $G[a,T]$ vanishes at some point $(t_0,s_0)\in I\times I$, then either $(t_0,s_0)$ belongs to the diagonal of the square $I\times I$ or $(t_0,s_0)$ is in the boundary of $I\times I$, that is, at least one of the three following properties hold:
\begin{enumerate}
\item $t_0=s_0\in I$.
\item $t_0=0$ or $t_0=T$.
\item $s_0=0$ or $s_0=T$.
\end{enumerate}
\end{proposition}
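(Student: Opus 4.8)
The plan is to argue by contradiction, exploiting the symmetry of $G[a,T]$ (which holds because $L[a]$ is self-adjoint) together with the standard analytic structure of a second-order Green's function: for each fixed $s$, the map $t\mapsto G[a,T](t,s)$ is continuous on $I$, solves $L[a]\,u=0$ on $I\setminus\{s\}$, and its $t$-derivative has a jump of unit size across $t=s$; moreover $G[a,T]$ is jointly continuous on $I\times I$, and it is symmetric. Without loss of generality I assume $G[a,T]\ge 0$ (the case $G[a,T]\le 0$ is identical after replacing minima by maxima), and suppose for contradiction that $G[a,T](t_0,s_0)=0$ with $t_0,s_0\in(0,T)$ and $t_0\ne s_0$; by symmetry I may assume $t_0<s_0$.

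First I would fix the second variable and set $g(t):=G[a,T](t,s_0)$. Since $g\ge 0$, $g(t_0)=0$, and $t_0\in(0,s_0)$ lies strictly to the left of the singularity $s_0$, the function $g$ is $C^1$ near $t_0$ (being a solution of the equation there) and attains an interior minimum at $t_0$, whence $g'(t_0)=0$. As $g$ solves $g''+a\,g=0$ on $[0,s_0)$ with $a\in L^\alpha(I)\subset L^1(I)$, Carath\'eodory uniqueness for the data $g(t_0)=g'(t_0)=0$ forces $g\equiv 0$ on $[0,s_0)$; in particular $G[a,T](t,s_0)=0$ for every $t\in[0,s_0)$.

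Next I would propagate this vanishing to the right of $s_0$. For any fixed $t_1\in(0,s_0)$ put $q(s):=G[a,T](t_1,s)$; the previous step gives $q(s_0)=G[a,T](t_1,s_0)=0$, and since $s_0>t_1$ lies to the right of the singularity $t_1$, the nonnegative function $q$ has an interior minimum at $s_0$, so $q'(s_0)=0$ and uniqueness yields $q\equiv 0$ on $(t_1,T]$. Letting $t_1\to s_0^-$ and using joint continuity of $G[a,T]$ gives $G[a,T](s_0,s)=0$ for all $s\in(s_0,T]$, and by symmetry $G[a,T](s,s_0)=0$ there as well. Combined with the first step and continuity at $s_0$, the function $t\mapsto G[a,T](t,s_0)$ then vanishes identically on $I$, which is impossible because its derivative must jump by one across $t=s_0$. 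This contradiction shows that $(t_0,s_0)$ cannot be simultaneously interior and off-diagonal, which is exactly the claim.

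The main obstacle is the second step. The first step alone only kills $G[a,T](\cdot,s_0)$ on $[0,s_0)$, and a naive attempt to repeat the minimum argument with the \emph{first} variable fixed at $s_0$ fails, since the jump at $s_0$ leaves a nontrivial solution to the right. Propagating the vanishing across the singularity therefore genuinely requires the two-variable argument together with symmetry and a limiting step. The remaining points to verify carefully are the $C^1$ regularity away from the diagonal (which licenses $g'(t_0)=0$ at an interior extremum), the applicability of uniqueness with an $L^\alpha$ potential, and the nonvanishing of the unit jump that produces the final contradiction.
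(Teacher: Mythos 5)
Your proposal is correct and takes essentially the same route as the paper: a zero of the nonnegative $G[a,T]$ at an interior off-diagonal point is an interior minimum of a Carath\'eodory solution, hence has vanishing derivative, so uniqueness for the IVP propagates the zero along rows/columns, symmetry transfers it between rows and columns, and the accumulated vanishing contradicts the unit jump of $\partial G/\partial t$ across the diagonal. The only cosmetic differences are your orientation $t_0<s_0$ (the paper takes $t_0>s_0$) and your extra limiting step $t_1\to s_0^-$, where the paper instead invokes symmetry directly and derives the contradiction on a whole diagonal segment rather than at the single point $(s_0,s_0)$.
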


\begin{proof}
Suppose, on the contrary, that $G[a,T](t_0,s_0)=0$ for some $(t_0,s_0)\in (0,T)\times (0,T)$ such that $t_0\neq s_0$. Since $G[a,T](t_0,s_0)=G[a,T](s_0,t_0)$, we may assume $t_0>s_0$. 

By definition of the Green's function, we know that
$$x(t)\equiv G[a,T](t,s_0), \qquad t \in I,$$
solves the equation
$$x^{\prime\prime}(t)+a(t)\,x(t)=0,\quad \mbox{ a. e.  }\;t\in (s_0,T],\quad x(t_0)=x^\prime(t_0)=0.$$

Then, $G[a,T](t,s_0)=0$ for all $t\in(s_0,T]$ and, in consequence, from the symmetric property, $G[a,T](s_0,s)=0$ for all $s\in (s_0,T]$.

Now, fix $s\in (s_0,T]$, since $G[a,T]$ is nonnegative on $I \times I$, we have that function
$$y(t)\equiv G[a,T](t,s), \quad t \in I,$$
is a solution of
$$y^{\prime\prime}(t)+a(t)\,y(t)=0,\quad \mbox{ a. e.  }\;t\in [0,s),\quad y(s_0)=y^\prime(s_0)=0.$$

Once again, $G[a,T](t,s)=0$ for all $s\in(s_0,T]$ and all $t\in[0,s)$.

From symmetry, we deduce $G[a,T](t,s)=0 \mbox{ for all } t\in(s_0,T] \mbox{ and } s\in[0,t).$
This contradicts the fact that
$$\frac{\partial\,G[a,T]}{\partial\,t}\,(t^+,t)-\frac{\partial\,G[a,T]}{\partial\,t}\,(t^-,t)=1$$
for all $t\in(s_0,T)$ and so we deduce the result.
\end{proof}

If we consider the periodic case with $a(t)=\left(\frac{\pi}{T}\right)^2$, using \cite{CCiM} we obtain the following expression for the Green's function
$$G[a,T](t,s)=\frac{T}{2\,\pi}\left\{\begin{array}{ll}
\sin\left(\frac{\pi\,(t-s)}{T}\right), & \quad 0\le s\le t\le T, \\
\vspace*{-0.5cm}\\
\sin\left(\frac{\pi\,(t-s+T)}{T}\right), & \quad 0\le t< s\le T, 
\end{array} \right.$$
so the Green's function is strictly positive on $I \times I$ except for the diagonal and the points $(0,T)$ and $(T,0)$.

On the other hand, when $a(t)=k^2<\left(\frac{\pi}{T}\right)^2$ and the Dirichlet boundary conditions are studied, we have that the Green's function is
$$G[a,T](t,s)=\frac{1}{k\,\sin{(k\,T)}}\left\{\begin{array}{ll}
\sin{(k\,s)}\,\sin{(k\,(t-T))}, & \quad 0\le s\le t\le T, \\
\vspace*{-0.5cm}\\
\sin{(k\,t)}\,\sin{(k\,(s-T))}, & \quad 0\le t< s\le T. 
\end{array} \right.$$
We observe that $G[a,T]$ is strictly negative on $(0,T)\times (0,T)$ and vanishes on the boundary of its square of definition.

In consequence, the previous result cannot be improved for general self-adjoint Hill's operators.

\subsection{Neumann Problem}
In this section we will obtain the expression of the Green's function of Neumann problem both as sum of Green's functions of periodic problems and as sum of Green's functions of Neumann problem defined in a different interval.

Assume now that the Neumann boundary value problem
\begin{equation}\label{e-N}\tag{$N,\,T$}
L[a]\,u(t)=\sigma(t),\quad \mbox{ a. e.  }\;t\in I,\quad u^\prime(0)=u^\prime(T)=0
\end{equation}
has a unique solution $u \in W^{2,1}(I)$ for all $\sigma\in L^1(I)$.

Suppose, in addition, that the periodic boundary value problem
\begin{equation}
\label{e-P-2T}\tag{$P,\,2\,T$} L[\tilde a]\,u(t)=\tilde \sigma(t),\quad \mbox{ a. e.   }\; t\in J,\quad
u(0)=u(2\,T),\ u'(0)=u'(2\,T),
\end{equation}
with $\tilde{a}$ the even extension of $a$ to the interval $[0,2\,T]$, has a unique solution $v \in W^{2,1}(J)$ for all $\tilde \sigma \in L^1(J)$.

Let $u$ be the unique solution of problem \eqref{e-N}. It is clear that, by defining $v$ as the even extension of $u$, we have that $v\in W^{2,1}(J)$ is a solution of \eqref{e-P-2T} for the particular case of  $\tilde{\sigma}$ defined as the even extension of $\sigma$.

So, denoting by $G_N[a,T]$ the Green's function related to problem (\ref{e-N}), we have that for all $t\in I$ the following property holds:
\begin{eqnarray*}
\int_0^TG_N[a,T](t,s)\,\sigma(s)\,ds&=&u(t)=v(t)=\int_0^{2\,T}G_P[\tilde a,2\,T](t,s)\,\tilde{\sigma}(s)\,ds \\
&& \vspace*{-0.5cm}\\
&=&\int_0^T G_P[\tilde a,2\,T](t,s)\,\sigma(s)\,ds+ \int_T^{2\,T} G_P[\tilde a,2\,T](t,s)\,\sigma(2\,T-s)\,ds\\
&& \vspace*{-0.5cm}\\
&=&\int_0^T{\left(G_P[\tilde a,2\,T](t,s)+G_P[\tilde a,2\,T](t,2\,T-s)\right)\,\sigma(s)\,ds}.
\end{eqnarray*}

Since $\sigma\in L^1(I)$ is arbitrarily chosen and due to the uniqueness of the Green's function of both problems, we arrive at the following connecting expression between Neumann and periodic Green's functions
\begin{equation}
\label{e-Green-NP}
G_N[a,T](t,s)=G_P[\tilde a,2\,T](t,s)+G_P[\tilde a,2\,T](t,2\,T-s) \qquad \forall \;(t,s) \in I \times I.
\end{equation}

Note that the previous equality gives us the exact expression of the Green's function for the Neumann problem by means of the value of the periodic one.

On the other hand, let $\tilde \sigma\in L^1(J)$ be arbitrarily chosen. Since the periodic problem (\ref{e-P-2T}) has a unique solution $v$, we have, due to the fact that $\tilde a(t)=\tilde a(2\,T-t)$, that $w(t)=v(2\,T-t)$ is the unique solution of the periodic problem
$$ L[\tilde{a}]\,w(t)=\tilde \sigma(2\,T-t), \quad \mbox{a. e. }\;t \in J,\quad w(0)=w(2\,T),\; w^\prime(0)=w^\prime(2\,T).$$

Since
$$
v(2\,T-t)=\int_0^{2\,T}G_P[\tilde a,2\,T](2\,T-t,s)\,\tilde \sigma(s)\,ds
$$
and
$$w(t)=\int_0^{2\,T}G_P[\tilde a,2\,T](t,s)\,\tilde \sigma(2\,T-s)\,ds=\int_0^{2\,T}G_P[\tilde a,2\,T](t,2\,T-s)\,\tilde \sigma(s)\,ds,
$$
we arrive at
$$G_P[\tilde a,2\,T](2\,T-t,s)=G_P[\tilde a,2\,T](t,2\,T-s)\qquad \forall \;(t,s)\in J\times J$$
or, which is the same,
\begin{equation*}
G_P[\tilde a,2\,T](t,s)=G_P[\tilde a,2\,T](2\,T-t,2\,T-s)\qquad \forall \;(t,s)\in J \times J.
\end{equation*}

In particular, we have that equation (\ref{e-Green-NP}) can be rewritten as
\begin{equation}\label{e-Green-NP-2}
G_N[a,T](t,s)=G_P[\tilde a,2\,T](t,s)+G_P[\tilde a,2\,T](2\,T-t,s) \qquad \forall \; (t,s)\in I \times I.
\end{equation}

Moreover, we observe that $v$ also satisfies Neumann boundary conditions on $[0,2\,T]$ so it is the unique solution of the problem
\begin{equation}\label{e-N-2T}\tag{$N,\,2\,T$}
u^{\prime\prime}(t)+\tilde{a}(t)\,u(t)=\tilde{\sigma}(t),\quad \mbox{ a. e.   }\;t\in J,\quad u^{\prime}(0)=u^{\prime}(2\,T)=0.
\end{equation}
An analogous reasoning lets us conclude that
\begin{equation}\label{e-Green-N-N}
G_N[a,T](t,s)=G_N[\tilde{a},2\,T](t,s)+G_N[\tilde{a},2\,T](2\,T-t,s) \qquad \forall \; (t,s)\in I \times I
\end{equation}
or, using (\ref{e-Green-NP-2}),
\begin{eqnarray*}
\nonumber
G_N[a,T](t,s)&=& G_P[\tilde{\tilde{a}},4\,T](t,s)+G_P[\tilde{\tilde{a}},4\,T](4\,T-t,s) +G_P[\tilde{\tilde{a}},4\,T](2\,T-t,s)\\
\label{e-Green-N-P-4T}
\\
\nonumber
&&+G_P[\tilde{\tilde{a}},4\,T](2\,T+t,s) \qquad \forall \; (t,s)\in I\times I,
\end{eqnarray*}
with $\tilde{\tilde{a}}$ the even extension to the interval $[0,4\,T]$ of potential $\tilde{a}$. Note that $\tilde{\tilde{a}}$ is a $2\,T$-periodic function.

Observe that both $a$ and $\tilde{a}$ or $\tilde{\tilde{a}}$ are not necessarily continuous functions.

\subsection{Dirichlet Problem}
In the same way, we will obtain the Green's function for the Dirichlet problem both as a sum of Green's functions for the periodic one and as a sum of Green's functions for another Dirichlet problem.

Let $G_D[a,T](t,s)$ be the Green's function related to the Dirichlet boundary value problem
\begin{equation}\label{e-D}\tag{$D,\,T$}
u^{\prime\prime}+a(t)\,u(t)=\sigma(t),\quad \mbox{ a. e.  }\;t\in I,\quad u(0)=u(T)=0.
\end{equation}

Making an odd extension $v$ of function $u$ to the interval $J$, we deduce that $v$ is a solution of \eqref{e-P-2T} for the particular choice of $\tilde{\sigma}$ as the odd extension of function $\sigma$ to the interval $J$.

Reasoning as in the previous case, we conclude that
\begin{equation}\label{e-Green-DP}
G_D[a,T](t,s)=G_P[\tilde a,2\,T](t,s)-G_P[\tilde a,2\,T](2\,T-t,s) \qquad \forall \; (t,s)\in I\times I.
\end{equation}

On the other hand, $v$ also satisfies Dirichlet boundary conditions on $[0,2\,T]$ so it is a solution of the problem
\begin{equation}\label{e-D-2T}\tag{$D,\,2\,T$}
u^{\prime\prime}(t)+\tilde{a}(t)\,u(t)=\tilde{\sigma}(t),\quad \mbox{ a. e.   }\;t\in J,\quad u(0)=u(2\,T)=0
\end{equation}
and we arrive at
\begin{equation}\label{e-Green-D-D}
G_D[a,T](t,s)=G_D[\tilde{a},2\,T](t,s)-G_D[\tilde{a},2\,T](2\,T-t,s) \qquad \forall \; (t,s)\in I \times I
\end{equation}
or, using (\ref{e-Green-DP}),
\begin{eqnarray*}
\nonumber
G_D[a,T](t,s)&=& G_P[\tilde{\tilde{a}},4\,T](t,s)-G_P[\tilde{\tilde{a}},4\,T](4\,T-t,s) -G_P[\tilde{\tilde{a}},4\,T](2\,T-t,s)\\
\label{e-Green-D-P-4T}
\\
\nonumber
&&+G_P[\tilde{\tilde{a}},4\,T](2\,T+t,s) \qquad \forall \; (t,s)\in I\times I.
\end{eqnarray*}

\subsection{Relation between Neumann and Dirichlet problems}
From the expressions previously obtained for $G_N[a,T]$ and $G_D[a,T]$, we are going to connect the existence and uniqueness of solution, and consequently the spectrum, for Neumann, Dirichlet and periodic problems. 

As an immediate consequence of (\ref{e-Green-NP-2}) and (\ref{e-Green-DP}) we have
\begin{equation}
\label{e-Green-DN-P}
G_N[a,T](t,s)+G_D[a,T](t,s)=2\,G_P[\tilde a,2\,T](t,s) \qquad \forall \; (t,s)\in I\times I
\end{equation}
and
\begin{equation}\label{e-N-D-P}
G_N[a,T](t,s)-G_D[a,T](t,s)=2\,G_P[\tilde a,2\,T](2\,T-t,s) \qquad \forall \; (t,s)\in I\times I.
\end{equation}

\begin{remark}
\label{r-per->Neu}
From equality \eqref{e-Green-NP} we have that if problem \eqref{e-P-2T} has a unique solution, then problem \eqref{e-N} has a solution given by
$$u(t)=\int_0^T{\left(G_P[\tilde a,2\,T](t,s)+G_P[\tilde a,2\,T](2\,T-t,s)\right)\,\sigma(s)\, ds}.$$

The uniqueness follows from the fact that the Neumann boundary conditions are linearly independent (see \cite[Lemma 1.2.21]{Cab}).

Consequently we observe that if problem \eqref{e-P-2T} is nonresonant, the same holds for \eqref{e-N}. In other words, the sequence of eigenvalues of problem \eqref{e-N} is contained into the sequence of eigenvalues of \eqref{e-P-2T}.

The same argument is valid, by means of equality \eqref{e-Green-DP}, to ensure that if problem \eqref{e-P-2T} has a unique solution, then problem \eqref{e-D} has a unique solution too. 

As consequence, denoting by $\Lambda_N[a,T]$, $\Lambda_D[a,T]$ and $\Lambda_P[\tilde{a},2\,T]$ the corresponding set of eigenvalues of problems \eqref{e-N}, \eqref{e-D} and \eqref{e-P-2T}, we deduce that
$$\Lambda_N[a,T] \cup \Lambda_D[a,T] \subset \Lambda_P[\tilde{a},2\,T].$$

On the other hand, equations \eqref{e-Green-DN-P} and \eqref{e-N-D-P} imply that the uniqueness of solution of problems \eqref{e-N} and \eqref{e-D} warrants the uniqueness of solution of \eqref{e-P-2T}. 

Thus, we conclude that
$$\Lambda_N[a,T] \cup \Lambda_D[a,T] = \Lambda_P[\tilde{a},2\,T].$$

The same reasoning lets us also deduce the following facts:
\begin{itemize}
\item If problem (\ref{e-N-2T}) has a unique solution then (\ref{e-N}) has a unique solution too.
\item If problem (\ref{e-D-2T}) has a unique solution then (\ref{e-D}) has a unique solution too.
\item The existence and uniqueness of solution of problem
\begin{equation}\label{e-P-4T}\tag{$P,\,4\,T$}
u^{\prime\prime}(t)+\tilde{\tilde{a}}(t)\,u(t)=\sigma(t),\ \mbox{ a. e. }\;t\in [0,4\,T],\quad u(0)=u(4\,T),\ u^{\prime}(0)=u^{\prime}(4\,T)
\end{equation}
is equivalent to the existence and uniqueness of solution of (\ref{e-N-2T}) and (\ref{e-D-2T}) and, consequently, implies the existence and uniqueness of solution of (\ref{e-N}) and (\ref{e-D}).
\end{itemize}

As consequence, denoting by $\Delta_N[\tilde{a},2\,T]$,\, $\Delta_D[\tilde{a},2\,T]$ and $\Delta_P[\tilde{\tilde{a}},4\,T]$ the set of eigenvalues of (\ref{e-N-2T}), (\ref{e-D-2T}) and (\ref{e-P-4T}), respectively, we have that
$$\Delta_N[a,T] \subset \Delta_N[\tilde{a},2\,T],$$
$$\Delta_D[a,T] \subset \Delta_D[\tilde{a},2\,T]$$
and
$$\Delta_N[a,T] \cup \Delta_D[a,T] \subset \Delta_N[\tilde{a},2\,T] \cup \Delta_D[\tilde{a},2\,T] = \Delta_P[\tilde{\tilde{a}},4\,T].$$
\end{remark}

\subsection{Mixed Problems and their relation with Neumann and Dirichlet}
\label{s-mixed}
Consider now the Mixed boundary value problem
\begin{equation}\label{e-M-1}\tag{$M_1,\,T$}
u^{\prime\prime}(t)+a(t)\,u(t)=\sigma(t),\ \mbox{ a. e.  }\;t\in I,\quad u'(0)=u(T)=0,
\end{equation}
and denote by $G_{M_1}[a,T](t,s)$ its related Green's function. 

Making an odd extension $v$ of function $u$ to the interval $J$, we deduce that $v$ is a solution of the anti-periodic problem
\begin{equation}\label{e-A-2T}\tag{$A,\,2\,T$}
u^{\prime\prime}(t)+\tilde{a}(t)\,u(t)=\tilde{\sigma}(t),\  \mbox{ a. e.  }\;t\in J,\quad u(0)=-u(2\,T),\quad u^{\prime}(0)=-u^{\prime}(2\,T),
\end{equation}
with $\tilde{\sigma}$ the odd extension of $\sigma$ to the interval $J$.

Thus we conclude that
\begin{equation}\label{e-Green-M1-A}
G_{M_1}[a,T](t,s)= G_A[\tilde{a},2\,T](t,s)- G_A[\tilde{a},2\,T](2\,T-t,s) \qquad \forall \; (t,s)\in I\times I.
\end{equation}

But $v$ also satisfies Neumann conditions on $[0,2\,T]$ so it is a solution of (\ref{e-N-2T}) and
\begin{equation}\label{e-Green-M1-N}
G_{M_1}[a,T](t,s)= G_N[\tilde{a},2\,T](t,s)- G_N[\tilde{a},2\,T](2\,T-t,s) \qquad \forall \; (t,s)\in I\times I,
\end{equation}
or, equivalently, using equation (\ref{e-Green-NP-2}) 
\begin{eqnarray*}
\nonumber
G_{M_1}[a,T](t,s)&=& G_P[\tilde{\tilde{a}},4\,T](t,s)+G_P[\tilde{\tilde{a}},4\,T](4\,T-t,s)-G_P[\tilde{\tilde{a}},4\,T](2\,T-t,s)\\
\label{e-Green-M1-P}
\vspace*{-1cm}\\
\nonumber
&&-G_P[\tilde{\tilde{a}},4\,T](2\,T+t,s) \qquad \forall \; (t,s)\in I\times I.
\end{eqnarray*}

\vspace*{0.2cm}

We can study now the corresponding Mixed boundary value problem
\begin{equation}\label{e-M-2}\tag{$M_2,\,T$}
u^{\prime\prime}(t)+a(t)\,u(t)=\sigma(t),\quad \mbox{ a. e.  }\;t\in I,\quad u(0)=u'(T)=0,
\end{equation}
denoting by $G_{M_2}[a,T](t,s)$ its related Green's function.

Considering in this case $v$ as the even extension of function $u$ to the interval $J$, we conclude that $v$ is a solution of problem (\ref{e-A-2T}) with $\tilde{\sigma}$ the even extension of $\sigma$. Reasoning as in previous cases, we deduce that
\begin{equation}\label{e-Green-M2-A}
G_{M_2}[a,T](t,s)=G_A[\tilde{a},2\,T](t,s)+G_A[\tilde{a},2\,T](2\,T-t,s) \qquad \forall \; (t,s)\in I\times I.
\end{equation}

In this case, $v$ satisfies Dirichlet conditions on the interval $[0,2\,T]$ so it is also a solution of (\ref{e-D-2T}) and we have that
\begin{equation}\label{e-Green-M2-D}
G_{M_2}[a,T](t,s)=G_D[\tilde{a},2\,T](t,s)+G_D[\tilde{a},2\,T](2\,T-t,s) \qquad \forall \; (t,s)\in I\times I,
\end{equation}
or, which is the same, using equation (\ref{e-Green-DP})
\begin{eqnarray*}
\nonumber
G_{M_2}[a,T](t,s)&=&G_P[\tilde{\tilde{a}},4\,T](t,s)-G_P[\tilde{\tilde{a}},4\,T](4\,T-t,s) +G_P[\tilde{\tilde{a}},4\,T](2\,T-t,s)\\
\label{e-Green-M2-P}\\
\nonumber
&&-G_P[\tilde{\tilde{a}},4\,T](2\,T+t,s) \qquad \forall \; (t,s)\in I\times I.
\end{eqnarray*}

Consequently, from (\ref{e-Green-M1-A}) and (\ref{e-Green-M2-A}) we deduce the following properties
$$G_{M_2}[a,T](t,s)+G_{M_1}[a,T](t,s)=2 \, G_A[\tilde{a},2\,T](t,s)\qquad \forall \; (t,s)\in I\times I$$
and
$$G_{M_2}[a,T](t,s)-G_{M_1}[a,T](t,s)=2\,G_A[\tilde{a},2\,T](2\,T-t,s)\qquad \forall \; (t,s)\in I\times I.$$

Analogously, from (\ref{e-Green-N-N}) and (\ref{e-Green-M1-N}) we obtain that
\begin{equation}\label{e-Green-NM1-N}
G_{N}[a,T](t,s)+G_{M_1}[a,T](t,s)=2\, G_N[\tilde{a},2\,T](t,s)\qquad \forall \; (t,s)\in I\times I
\end{equation}
and
\begin{equation}\label{e-N-M1-N}
G_{N}[a,T](t,s)-G_{M_1}[a,T](t,s)=2\, G_N[\tilde{a},2\,T](2\,T-t,s)\qquad \forall \; (t,s)\in I\times I
\end{equation}
and, from (\ref{e-Green-D-D}) and (\ref{e-Green-M2-D}) we deduce
\begin{equation}\label{e-Green-M2D-D}
G_{M_2}[a,T](t,s)+G_{D}[a,T](t,s)=2\, G_D[\tilde{a},2\,T](t,s)\qquad \forall \; (t,s)\in I\times I
\end{equation}
and
\begin{equation}\label{e-M2-D-D}
G_{M_2}[a,T](t,s)-G_{D}[a,T](t,s)=2\, G_D[\tilde{a},2\,T](2\,T-t,s)\qquad \forall \; (t,s)\in I\times I.
\end{equation}

Compiling all the previous equations we arrive at
$$G_{N}[a,T](t,s)+G_D[a,T](t,s)+G_{M_1}[a,T](t,s)+G_{M_2}[a,T](t,s)=4\,G_P[\tilde{\tilde{a}},4\,T](t,s)\quad \forall \, (t,s)\in I\times I.$$

\begin{remark} \label{r-Ant->Mixed}
Reasoning analogously to Remark \ref{r-per->Neu} and taking into account the fact that all the considered boundary conditions are linearly independent, using \cite[Lemma 1.2.21]{Cab} we are able to deduce the following facts:
\begin{itemize}
\item The existence and uniqueness of solution of problems (\ref{e-M-1}) and (\ref{e-M-2}) is equivalent to the existence and uniqueness of solution of (\ref{e-A-2T}).
\item If problem (\ref{e-P-4T}) has a unique solution, then (\ref{e-M-1}) and (\ref{e-M-2}) also have a unique solution and, consequently, problem (\ref{e-A-2T}) has a unique solution too.
\item The existence and uniqueness of solution of problems (\ref{e-N}) and (\ref{e-M-1}) is equivalent to the existence and uniqueness of solution of (\ref{e-N-2T}).
\item The existence and uniqueness of solution of problems (\ref{e-D}) and (\ref{e-M-2}) is equivalent to the existence and uniqueness of solution of (\ref{e-D-2T}).
\item From Remark \ref{r-per->Neu}, we know that the existence and uniqueness of solution of (\ref{e-N-2T}) and (\ref{e-D-2T}) is equivalent to the existence and uniqueness of solution of problem (\ref{e-P-4T}). We deduce then that the periodic problem (\ref{e-P-4T}) has a unique solution if and only if problems (\ref{e-N}), (\ref{e-D}), (\ref{e-M-1}) and (\ref{e-M-2}) have a unique solution.
\end{itemize}

As consequence, denoting by $\Lambda_{M_1}[a,T]$, $\Lambda_{M_2}[a,T]$ and $\Lambda_A[\tilde{a},2\,T]$ the set of eigenvalues of problems (\ref{e-M-1}), (\ref{e-M-2}) and (\ref{e-A-2T}), respectively, we conclude that
$$\Lambda_{M_1}[a,T]\cup \Lambda_{M_2}[a,T]= \Lambda_A[\tilde{a},2\,T] \subset \Lambda_P[\tilde{\tilde{a}},4\,T],$$
$$\Lambda_N[a,T] \cup \Lambda_{M_1}[a,T]= \Lambda_N[\tilde{a},2\,T],$$
$$\Lambda_D[a,T] \cup \Lambda_{M_2}[a,T]= \Lambda_D[\tilde{a},2\,T]$$
and
$$\Lambda_N[a,T]\cup \Lambda_D[a,T]\cup \Lambda_{M_1}[a,T]\cup \Lambda_{M_2}[a,T]= \Lambda_P[\tilde{\tilde{a}},4\,T].$$

Moreover, taking into account the previous inclusions and the ones obtained in Remark \ref{r-per->Neu} it is clear that
$$\Lambda_P[\tilde{a},2\,T]=\Lambda_N[a,T]\cup \Lambda_D[a,T] \subset \Lambda_P[\tilde{\tilde{a}},4\,T]$$
and, consequently, we deduce
$$\Lambda_P[\tilde{a},2\,T] \cup \Lambda_A[\tilde{a},2\,T]= \Lambda_P[\tilde{\tilde{a}},4\,T].$$
\end{remark}

\vspace*{0.3cm}

On the other hand, it is possible to obtain a direct relation between the Green's functions of the two mixed problems

\begin{lemma}
Let $a\in L^1(I)$ and define $b(t)=a(T-t)$ for all $t\in I$. Then
$$G_{M_1}[a,T](T-t,T-s)=G_{M_2}[b,T](t,s) \qquad \forall \; (t,s)\in I\times I.$$
\end{lemma}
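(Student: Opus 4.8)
The plan is to exploit the defining property of Green's functions together with a change of variables that maps the problem $(M_1,T)$ to the problem $(M_2,T)$ with the reflected potential $b(t)=a(T-t)$. The key observation is that the substitution $t\mapsto T-t$ interchanges the two endpoints of the interval $I$, and therefore interchanges the two mixed boundary conditions: the Neumann-type condition $u'(0)=0$ at the left endpoint becomes a condition at the right endpoint, and the Dirichlet-type condition $u(T)=0$ at the right endpoint becomes a condition at the left endpoint. So reflecting a solution of an $M_1$ problem should produce a solution of an $M_2$ problem.

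First I would take an arbitrary $\sigma\in L^1(I)$ and let $u\in W^{2,1}(I)$ be the unique solution of $(M_1,T)$ with potential $a$ and right-hand side $\sigma$, namely
\begin{equation*}
u(t)=\int_0^T G_{M_1}[a,T](t,s)\,\sigma(s)\,ds,\qquad u''+a\,u=\sigma \text{ a.e.}, \quad u'(0)=u(T)=0.
\end{equation*}
Then I would define $w(t)=u(T-t)$ and verify that $w$ solves the corresponding $M_2$ problem with potential $b$. A direct computation gives $w''(t)=u''(T-t)=\sigma(T-t)-a(T-t)\,u(T-t)=\sigma(T-t)-b(t)\,w(t)$, so $w''+b\,w=\tau$ a.e. where $\tau(t)=\sigma(T-t)$. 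For the boundary conditions, $w(0)=u(T)=0$ and $w'(t)=-u'(T-t)$, hence $w'(T)=-u'(0)=0$; thus $w$ satisfies exactly the $M_2$ conditions $w(0)=w'(T)=0$. By uniqueness of the Green's function this forces
\begin{equation*}
w(t)=\int_0^T G_{M_2}[b,T](t,s)\,\tau(s)\,ds=\int_0^T G_{M_2}[b,T](t,s)\,\sigma(T-s)\,ds.
\end{equation*}

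The final step is to equate the two expressions for $w$. On one hand $w(t)=u(T-t)=\int_0^T G_{M_1}[a,T](T-t,s)\,\sigma(s)\,ds$; on the other hand the display above, after the substitution $s\mapsto T-s$ in the integral, gives $w(t)=\int_0^T G_{M_2}[b,T](t,T-s)\,\sigma(s)\,ds$. Since $\sigma\in L^1(I)$ is arbitrary, the two kernels must agree, yielding $G_{M_1}[a,T](T-t,s)=G_{M_2}[b,T](t,T-s)$ for all $(t,s)$; renaming $s\mapsto T-s$ produces the stated identity $G_{M_1}[a,T](T-t,T-s)=G_{M_2}[b,T](t,s)$. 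The main obstacle, though a mild one, is the careful bookkeeping of the two reflections (one in $t$ coming from $w(t)=u(T-t)$ and one in $s$ coming from the change of variable inside the integral) so that the final indices land correctly; the regularity claim $w\in W^{2,1}(I)$ and the existence and uniqueness of the $M_2$ problem with potential $b$ (which follows from that of $(M_1,T)$ by the same reflection, so that $G_{M_2}[b,T]$ is well defined) should be noted but are routine.
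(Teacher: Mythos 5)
Your proposal is correct and follows essentially the same argument as the paper: reflect the unique solution of \eqref{e-M-1} via $t\mapsto T-t$, observe that it solves the $M_2$ problem with potential $b$ and right-hand side $\sigma(T-\cdot)$, compare the two integral representations, and conclude from the arbitrariness of $\sigma$ and the uniqueness of the Green's function. The extra details you supply (the explicit computation of $w''$, the boundary conditions, and the remark that well-posedness of the reflected problem follows by the same reflection) are exactly the routine verifications the paper leaves implicit.
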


\begin{proof}
Let $u$ be the unique solution for the mixed problem \eqref{e-M-1}, given explicitly by 
$$u(t)=\int_{0}^{T}G_{M_1}[a,T](t,s)\,\sigma(s)\,ds.$$
Clearly, $v(t)=u(T-t)$ is the unique solution of the mixed problem
$$v''(t)+b(t)\,v(t)=\sigma(T-t), \quad v(0)=v'(T)=0.$$
Therefore, we know that
$$u(T-t)=\int_{0}^{T}G_{M_1}[a,T](T-t,s)\,\sigma(s)\,ds,$$
meanwhile, on the other hand, we have that
$$v(t)=\int_{0}^{T}G_{M_2}[b,T](t,s)\,\sigma(T-s)\,ds=\int_{0}^{T}G_{M_2}[b,T](t,T-s)\,\sigma(s)\,ds.$$
As the previous equalities are valid for all $\sigma \in L^1(I)$ we deduce that
$$G_{M_1}[a,T](T-t,s)=G_{M_2}[b,T](t,T-s) \qquad \forall \; (t,s)\in I\times I$$
or, equivalently,
$$G_{M_1}[a,T](T-t,T-s)=G_{M_2}[b,T](t,s) \qquad \forall \; (t,s)\in I\times I.$$
\end{proof}

As an immediate consequence we have that

\begin{corollary}
\label{l-a(T-t)}
Let $a \in L^1(I)$ and define $b(t)=a(T-t)$ for all $t \in I$. Then
$$\Lambda_{M_1}[a,T]=\Lambda_{M_2}[b,T].$$
In particular, if $a(t)=a(T-t)$ then the eigenvalues of mixed problems are the same.
\end{corollary}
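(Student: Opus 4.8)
The plan is to characterise each eigenvalue set as the set of parameters for which the corresponding Green's function fails to exist, and then to transport the identity of the preceding Lemma to the shifted potentials $a+\lambda$.

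First I would recall the nonresonance characterisation used throughout the paper: $\lambda \notin \Lambda_{M_1}[a,T]$ precisely when the operator $L[a+\lambda]$ is nonresonant under the boundary conditions of \eqref{e-M-1}, i.e. when the Green's function $G_{M_1}[a+\lambda,T]$ exists; and likewise $\lambda \notin \Lambda_{M_2}[b,T]$ precisely when $G_{M_2}[b+\lambda,T]$ exists. Thus it suffices to show that $G_{M_1}[a+\lambda,T]$ exists if and only if $G_{M_2}[b+\lambda,T]$ exists, for every $\lambda \in \R$.

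Next I would apply the preceding Lemma to the potential $a+\lambda$ in place of $a$. The key point is that the reflected potential associated with $a+\lambda$ is $(a+\lambda)(T-t)=a(T-t)+\lambda=b(t)+\lambda$, so the Lemma yields
$$G_{M_1}[a+\lambda,T](T-t,T-s)=G_{M_2}[b+\lambda,T](t,s),\qquad (t,s)\in I\times I.$$
Since the map $(t,s)\mapsto(T-t,T-s)$ is a bijection of $I\times I$ onto itself, this identity shows that one Green's function is well defined exactly when the other is, and combined with the previous paragraph it gives $\Lambda_{M_1}[a,T]=\Lambda_{M_2}[b,T]$. The final assertion is then immediate: if $a(t)=a(T-t)$ then $b\equiv a$, whence $\Lambda_{M_1}[a,T]=\Lambda_{M_2}[a,T]$.

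The only delicate point I anticipate is that the preceding Lemma is phrased under the standing assumption that the relevant Green's functions exist, so literally it gives the displayed identity only on the set of $\lambda$ for which both operators are nonresonant. To close the equivalence of existence cleanly I would instead argue directly with eigenfunctions: if $u$ is a nontrivial solution of $u''+(a+\lambda)\,u=0$ with $u'(0)=u(T)=0$, then $v(t)=u(T-t)$ satisfies $v''(t)+(b(t)+\lambda)\,v(t)=0$ together with $v(0)=u(T)=0$ and $v'(T)=-u'(0)=0$, so that $v$ solves the $M_2$ problem for $b+\lambda$; this correspondence is a linear bijection between the kernels of the two problems. Hence $\lambda$ is an eigenvalue of \eqref{e-M-1} for $a$ if and only if it is an eigenvalue of \eqref{e-M-2} for $b$, which is exactly $\Lambda_{M_1}[a,T]=\Lambda_{M_2}[b,T]$ and sidesteps any existence caveat.
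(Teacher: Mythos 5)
Your proof is correct, and it turns on the same change of variables $t\mapsto T-t$ that underlies the paper's argument; the paper simply presents the corollary as an immediate consequence of the preceding lemma, i.e.\ of the identity $G_{M_1}[a,T](T-t,T-s)=G_{M_2}[b,T](t,s)$ applied with $a+\lambda$ in place of $a$ (using, as you do, that the reflected potential of $a+\lambda$ is $b+\lambda$). Where you go beyond the paper is in flagging, and then repairing, the logical caveat: that identity is only available at those $\lambda$ for which both operators are already nonresonant, so on its own it shows only that the Green's functions agree where both exist, not that resonance of one problem forces resonance of the other. Your closing argument --- the linear bijection $u\mapsto u(T-\cdot)$ between the kernel of $u''+(a+\lambda)\,u=0$, $u'(0)=u(T)=0$, and the kernel of $v''+(b+\lambda)\,v=0$, $v(0)=v'(T)=0$ --- closes exactly that gap and is in fact the cleanest self-contained proof of $\Lambda_{M_1}[a,T]=\Lambda_{M_2}[b,T]$; it is the homogeneous analogue of the solution correspondence $v(t)=u(T-t)$ that the paper itself uses inside the lemma's proof for the nonhomogeneous problem. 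So both routes work: the Green's-function identity yields the result modulo the existence caveat you identified, while your eigenfunction correspondence yields it unconditionally.
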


\section{Some applications}

\subsection{Constant sign Green's function} \label{sec-signo-cte}

We start with the adaptation of Proposition \ref{p-Gpos} for the Green's function of the Neumann problem. Next we prove the existence of a certain order of appearance for the first eigenvalues of each boundary value problem and we relate the constant sign of the diverse Green's functions considered in previous sections. As a consequence we will deduce some comparison results between the Green's function related to different boundary value problems. Moreover some order between the different spectrums is obtained. We finalize with several examples that illustrate the obtained results.

Let ${\lambda}_N(a,T)$ be the smallest eigenvalue of the
Neumann problem
$$
u''(t)+(a(t) + \lambda)\, u(t)=0,\quad \mbox{ a. e.  }\;t\in I, \quad u'(0)=u'(T)=0.
$$

Let ${\lambda}_{M_1}(a,T)$ be the smallest eigenvalue of the Mixed problem
$$
u''(t)+(a(t) + \lambda)\, u(t)=0,\quad \mbox{ a. e.  }\;t\in I, \quad u'(0)=u(T)=0.
$$

Let ${\lambda}_{M_2}(a,T)$  be the smallest eigenvalue of the Mixed problem
$$
u''(t)+(a(t) + \lambda)\, u(t)=0,\quad \mbox{ a. e.  }\;t\in I, \quad u(0)=u'(T)=0.
$$

\begin{lemma}\label{l-Green-NP}
Suppose that the Green's function $G_N[a,T]$ is nonnegative on $I \times I$ and there is some $(t_0,s_0) \in I \times I$ for which $G_N[a,T](t_0,s_0)=0$, then either $t_0=s_0=0$ or $t_0=s_0=T$.
\end{lemma}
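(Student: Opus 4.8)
The plan is to exploit two facts the paper gives us for free: that the Neumann problem is self-adjoint, and that Proposition~\ref{p-Gpos} already constrains the zeros of a sign-definite self-adjoint Green's function. Since $L[a]$ with domain $\{u\in W^{2,1}(I):u'(0)=u'(T)=0\}$ is self-adjoint, $G_N[a,T]$ is symmetric and Proposition~\ref{p-Gpos} applies directly: every zero $(t_0,s_0)$ of the nonnegative $G_N[a,T]$ lies on the diagonal ($t_0=s_0$) or on the boundary ($t_0\in\{0,T\}$ or $s_0\in\{0,T\}$). The whole task is then to upgrade this to $(t_0,s_0)\in\{(0,0),(T,T)\}$, i.e. to discard every non-corner boundary point and every interior diagonal point. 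The engine throughout is that, for fixed $s$, the column $x(t)=G_N[a,T](t,s)$ solves $x''+a\,x=0$ on $(0,s)$ and on $(s,T)$, satisfies the Neumann conditions $x'(0)=x'(T)=0$, is nonnegative, and has the unit jump $x'(s^+)-x'(s^-)=1$; together with uniqueness for the linear initial value problem, this is all the machinery needed.

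For the non-corner boundary points I would argue as in Proposition~\ref{p-Gpos}, now feeding in the Neumann data. Suppose $t_0=0$ and $s_0\in(0,T]$. Then $x(t)=G_N[a,T](t,s_0)$ satisfies $x(0)=x'(0)=0$, so uniqueness gives $x\equiv0$ on $[0,s_0]$, and by symmetry $G_N[a,T](s_0,s)=0$ for $s\in[0,s_0]$. Fixing any $s\in(0,s_0)$, the column $y(t)=G_N[a,T](t,s)$ then vanishes at $t=s_0$ together with its derivative — via the interior-minimum condition of the nonnegative $y$ when $s_0<T$, and via the Neumann condition $y'(T)=0$ when $s_0=T$ — so $y\equiv0$ on $[s,T]$. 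Combined with $G_N[a,T](t,s)=G_N[a,T](s,t)$, this forces the entire column $G_N[a,T](\cdot,s)$ to vanish identically, contradicting its unit jump. The reflection $t\mapsto T-t$ (which uses $x'(T)=0$) and the symmetry in $(t,s)$ dispatch the cases $t_0=T$, $s_0=0$ and $s_0=T$ identically, eliminating all boundary points except $(0,0)$ and $(T,T)$ — in particular the corners $(0,T)$ and $(T,0)$.

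The main obstacle is the interior diagonal case $t_0=s_0=\xi\in(0,T)$, which Proposition~\ref{p-Gpos} explicitly permits because its jump argument degenerates precisely at the corner of the diagonal; ruling it out genuinely requires the separated structure of the Neumann conditions. For this I would invoke the product representation $G_N[a,T](t,s)=W^{-1}\,u_0(\min\{t,s\})\,u_1(\max\{t,s\})$, valid by linearity and uniqueness, where $u_0,u_1$ solve $u''+a\,u=0$ nontrivially with $u_0'(0)=0$, $u_1'(T)=0$ and $W\neq0$ is their constant Wronskian. For $t<s$ one has $G_N[a,T](t,s)=W^{-1}u_1(s)\,u_0(t)$, so nonnegativity forces $u_0$ to keep a constant sign on $[0,T)$ (take $s=T$, noting $u_1(T)\neq0$ since $u_1'(T)=0$); as $u_0(0)\neq0$ and an interior zero would be a double zero by the minimum condition, $u_0$ has no zero on $[0,T)$, and after normalising $u_0>0$ on $[0,T)$ and, symmetrically, $W^{-1}u_1>0$ on $(0,T]$. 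Hence $G_N[a,T](\xi,\xi)=0$ forces $u_0(\xi)=0$ or $u_1(\xi)=0$, each of which makes a half-column vanish and produces a non-corner boundary zero ($G_N[a,T](T,\xi)=0$ or $G_N[a,T](0,\xi)=0$) already excluded above. In fact this same representation gives everything at once: a zero of $G_N[a,T]$ can occur only where $\min\{t_0,s_0\}=T$ or $\max\{t_0,s_0\}=0$, that is only at $(T,T)$ or $(0,0)$, which closes the proof.
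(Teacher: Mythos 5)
Your proof is correct, and for the crucial part of the lemma it takes a genuinely different route from the paper. Both proofs start identically: self-adjointness plus Proposition~\ref{p-Gpos} localizes the zero to the boundary or the diagonal, and your elimination of non-corner boundary zeros (IVP uniqueness fed by the Neumann data, the interior-minimum observation, and symmetry, ending in a contradiction with the unit jump) is essentially the paper's own argument. The difference is the interior diagonal case $t_0=s_0\in(0,T)$: the paper handles it by even reflection of the column $G_N[a,T](\cdot,t_0)$ to $[0,2T]$, which produces a solution of $y''+\tilde a\,y=0$ vanishing at $t_0$ and $2T-t_0$; Sturm comparison against the even extension of the first eigenfunction of the mixed problem then forces $\lambda_{M_2}(a,T)<0$, and a second comparison makes the column $G_N[a,T](\cdot,0)$ change sign, contradicting nonnegativity. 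You instead invoke the classical factorization $G_N[a,T](t,s)=W^{-1}u_0(\min\{t,s\})\,u_1(\max\{t,s\})$ available for separated boundary conditions, and show that nonnegativity forces $u_0>0$ on $[0,T)$ and $W^{-1}u_1>0$ on $(0,T]$, so zeros can only sit at $(0,0)$ or $(T,T)$. Your route is more elementary and self-contained (only IVP uniqueness and Fermat's interior-minimum condition, no oscillation theory or spectral input), and, as you note, it makes Proposition~\ref{p-Gpos} and the whole boundary-case analysis redundant, since the factorization proves the lemma in one stroke. What the paper's longer route buys is the explicit link between diagonal zeros of $G_N$ and the mixed eigenvalues: the observation that a zero at $(t_0,t_0)$ interacts with $\lambda_{M_2}(a,T)$ (and, at the corners, with the problems \eqref{e-M2-h} and \eqref{e-M1-h}) is precisely what is reused immediately afterwards in Corollary~\ref{c-Green-N} and in items five and six of Theorem~\ref{t-lambdaP=lambdaN}, where the corner values $G_N[a+\lambda,T](0,0)$ and $G_N[a+\lambda,T](T,T)$ characterize $\lambda_{M_1}(a,T)$ and $\lambda_{M_2}(a,T)$; your $u_0$, $u_1$ would also yield this (the corner values are $u_0(0)^2W^{-1}$-- and $u_1(T)^2W^{-1}$--type quantities), but you would have to develop that connection separately.
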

\begin{proof}
Suppose that $G_N[a,T](t_0,s_0)=0$ for some $(t_0,s_0)\in I\times I$. Since $G_N[a,T]\ge 0$ on $I\times I$, as operator $L[a]$ is self-adjoint, Proposition \ref{p-Gpos} lets us conclude that either $(t_0,s_0)$ belongs to the boundary of the square of definition or to its diagonal.

In the first case, suppose that $t_0 \in (0, T)$ and $s_0=0$. Then we have that $x_0(t)\equiv G_N[a,T](t,0)$ satisfies the equation 
$$x''_0(t)+a(t)\,x_0(t)=0, \ t\in(0,T], \quad x_0(t_0)=x'_0(t_0)=0,$$ 
which means that $G_N[a,T](t,0)\equiv 0$ on $(0,T]$.

From the symmetry of $G_N[a,T]$, we have that $G_N[a,T](0,s)\equiv 0$ for all $s \in (0,T]$.

As consequence, $x_s(t)\equiv G_N[a,T](t,s)$ satisfies the equation
$$x_s''(t)+a(t)\, x_s(t)=0, \ t \in [0,s), \quad x_s(0)=x_s'(0)=0,$$
which implies that $G_N[a,T](t,s)\equiv 0$ for all $t<s$. 
Using again the symmetry of $G_N[a,T]$ we have that it is identically zero on $I \times I$ and we reach a contradiction.

The argument is valid for all $(t_0,s_0)$ in the boundary of $I \times I$ except for $(0,0)$ and $(T,T)$.

Assume now that $G_N[a,T](t_0,t_0)=0$ for some $t_0\in (0,T)$. In this case, defining $x_{t_0}(t)$ as the even extension to $J$ of $G_N[a,T](t,t_0)$, we have that it satisfies the equation
$$x_{t_0}''(t)+\tilde a(t)\, x_{t_0}(t)=0, \; t \in (t_0,2\, T-t_0), \quad x_{t_0}(t_0)=x_{t_0}(2\, T -t_0)=0.$$

From classical Sturm-Liouville theory (\cite{simmons}), we have that for any $\lambda\ge 0$ every nontrivial solution of the equation
\begin{equation}\label{e-a+lambda}
y''(t)+(\tilde a(t)+\lambda)\, y(t)=0, \ t \in \mathbb{R},
\end{equation}
has as least one zero on $[t_0,2\, T-t_0]$.

Then, as the even extension to $J$ of the positive eigenfunction on $(0,T]$ associated with $\lambda_{M_2}(a,T)$ solves (\ref{e-a+lambda}), we deduce that $\lambda_{M_2}(a,T)<0$. 

As consequence, for any $\lambda \in ({\lambda}_{M_2}(a,T),0]$ we have that $y_0$, the even extension to $J$ of $G_N[a+\lambda,T](t,0)$, has at least one zero on $(0,2\,T)$. Moreover, all the zeros of $y_0$ are simple because otherwise $G_N[a+\lambda,T](t,0)\equiv 0$ on $(0,T]$, which cannot happen. Then necessarily $y_0$ changes its sign on $(0,2\,T)$ and, as it is an even function, $G_N[a+\lambda,T](t,0)$ changes its sign on $(0,T)$. This contradicts the hypothesis that $G_N[a,T]$ is nonnegative on $I \times I$.
\end{proof}

Note that if $G_N[a,T](0,0)=0$ we have that $x(t)\equiv G_N[a,T](t,0)$ is a solution of
\begin{equation}\label{e-M2-h}
x^{\prime\prime}(t)+a(t)\,x(t)=0,\ t\in I,\quad x(0)=x^\prime(T)=0.
\end{equation}

Moreover, when $G_N[a,T](T,T)=0$, $y(t)=G_N[a,T](t,T)$ is a solution of
\begin{equation}\label{e-M1-h}
y^{\prime\prime}(t)+a(t)\,y(t)=0,\ t\in I,\quad y^\prime(0)=y(T)=0.
\end{equation}

As consequence, we deduce the following result
\begin{corollary}\label{c-Green-N}
If $G_P[\tilde a,2\,T]$ has constant sign on $J\times J$ then $G_N[a,T]$ has the same sign as $G_P[\tilde a,2\,T]$ on $I \times I$ and it is different from zero for all $(t,s)\in (I\times I)\setminus\{(0,0)\cup(T,T)\}$. 

Moreover, $G_N[a,T](T,T)=0$ if and only only if equation (\ref{e-M2-h}) has a non zero and constant sign solution on $[0,T)$, which means that $\lambda_{M_2}(a,T)=0$.

$G_N[a,T](0,0)=0$ if and only if equation (\ref{e-M1-h}) has a non zero and constant sign solution on $(0,T]$, which means that $\lambda_{M_1}(a,T)=0$.
\end{corollary}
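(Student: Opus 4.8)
The plan is to treat the three assertions in turn. First I would read off the sign of $G_N[a,T]$ directly from \eqref{e-Green-NP}: for $(t,s)\in I\times I$ the two summands $G_P[\tilde a,2\,T](t,s)$ and $G_P[\tilde a,2\,T](t,2\,T-s)$ are evaluated at points of $J\times J$ (note that $2\,T-s\in[T,2\,T]$), so each carries the common constant sign of $G_P[\tilde a,2\,T]$ and hence so does their sum. Since $G_N[a,T]$ is a genuine, nontrivial Green's function, it has exactly that sign. To confine its zeros I would invoke Lemma \ref{l-Green-NP}: using that $L[a]$ is self-adjoint, hence $G_N[a,T]$ symmetric, any zero of a nonnegative $G_N[a,T]$ must be $(0,0)$ or $(T,T)$. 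The nonpositive case is handled by the same proof with all inequalities reversed, the only sign-sensitive step being the concluding sign-change contradiction, which is symmetric; this already gives that $G_N[a,T]$ does not vanish off the two corners.

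For the two equivalences I would combine the elementary observations recorded immediately before the corollary with a one-dimensionality argument. In the forward direction, if $G_N[a,T]$ vanishes at $(0,0)$ (resp. $(T,T)$), those observations show that the slice $x(t)\equiv G_N[a,T](t,0)$ (resp. $y(t)\equiv G_N[a,T](t,T)$) solves the associated homogeneous mixed equation (\ref{e-M2-h}) (resp. (\ref{e-M1-h})). Because $G_N[a,T]$ has constant sign by the first part, this slice is automatically a \emph{nonzero constant-sign} solution on the relevant half-open subinterval, which is precisely the required conclusion.

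For the converse I would start from a nonzero constant-sign solution $\phi$ of the mixed problem. Both $\phi$ and the corresponding boundary slice satisfy $u''+a\,u=0$ together with the single Neumann condition that $G_N[a,T]$ inherits at the endpoint opposite to the Dirichlet-type node. Since the space of solutions of the equation subject to one endpoint condition is one dimensional, and the slice is not identically zero (otherwise symmetry forces $G_N[a,T]\equiv 0$, exactly as in Lemma \ref{l-Green-NP}), the slice must be a nonzero scalar multiple of $\phi$; evaluating at the node then yields that $G_N[a,T]$ vanishes at the corner. Finally, to pass from ``nonzero constant-sign solution'' to ``$\lambda_{M_2}(a,T)=0$'' (resp. $\lambda_{M_1}(a,T)=0$) I would use the classical Sturm-Liouville characterization of the ground state of a separated problem: a nodeless eigenfunction can belong only to the smallest eigenvalue, so the existence of such a solution at $\lambda=0$ is equivalent to $0$ being the first mixed eigenvalue.

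The step I expect to be the main obstacle is the combination of the converse direction with the eigenvalue identification. One must verify carefully that the boundary slice genuinely inherits the Neumann condition at the correct (opposite) endpoint, that it is nontrivial, and that ``constant sign'' is enough to single out the \emph{first} eigenvalue rather than merely some eigenvalue of the mixed problem. In the same vein, keeping straight which corner corresponds to which mixed problem, (\ref{e-M2-h}) or (\ref{e-M1-h}), and on which half-open interval $[0,T)$ or $(0,T]$ the solution keeps its sign, requires care, since the Dirichlet-type node sits precisely at the endpoint shared by the vanishing corner.
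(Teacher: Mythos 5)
Your proposal is correct and follows essentially the same route as the paper's (largely implicit) proof: the sign of $G_N[a,T]$ is read off from the decomposition \eqref{e-Green-NP}, the localization of zeros at the two corners comes from Lemma \ref{l-Green-NP} (the paper handles the negative case via Lemma \ref{l-Green-neg}, which gives strict negativity of $G_P[\tilde a,2\,T]$ and hence of $G_N[a,T]$ everywhere, but your reversed-inequalities variant also works), and the two equivalences come from the slice observations recorded just before the corollary together with the one-dimensionality of the solution space under a single endpoint condition and the Sturm--Liouville fact that a constant-sign eigenfunction must correspond to the smallest eigenvalue.

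One point deserves emphasis: your pairing of corners with mixed problems --- $(0,0)$ with (\ref{e-M2-h}) and $\lambda_{M_2}(a,T)$, and $(T,T)$ with (\ref{e-M1-h}) and $\lambda_{M_1}(a,T)$ --- is the reverse of what the printed statement says, and yours is the correct one. Indeed, the slice $x(t)\equiv G_N[a,T](t,0)$ inherits the Neumann condition at the opposite endpoint, $x'(T)=0$, so its vanishing at $t=0$ produces a nontrivial solution of $x(0)=x'(T)=0$, which is precisely (\ref{e-M2-h}); this agrees with the ``Note'' preceding the corollary, and it is also forced by the intervals appearing in the statement itself (a solution of (\ref{e-M2-h}) vanishes at $t=0$, so it cannot be nonzero with constant sign on $[0,T)$; that interval is only consistent with solutions of (\ref{e-M1-h})). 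The printed corollary (and likewise assertions five and six of Theorem \ref{t-lambdaP=lambdaN}) thus has $M_1$ and $M_2$ interchanged; what you prove is the corrected version, which is the statement the paper actually uses and intends.
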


In the sequel we prove some relations between the first eigenvalues of different problems.

\begin{theorem}
\label{t-lambdaP=lambdaN}
The following equalities are fulfilled for any $a \in L^1(I)$.
\begin{enumerate}
\item ${\lambda}_N(a,T)={\lambda}_N(\tilde a,2\,T)={\lambda}_P(\tilde a,2\,T).$
\item $G_N[a+\lambda,T](t,s)<0$ for all $(t,s) \in I \times I$ if and only if $\lambda < {\lambda}_P(\tilde a,2\,T)$.
\item $G_N[a+\lambda,T](t,s)> 0$ on $I \times I$ if and only if  ${\lambda}_P(\tilde a,2\,T)< \lambda < \min{\{{\lambda}_{M_1}(a,T), {\lambda}_{M_2}(a,T)\}}$.
\item $G_N[a+\lambda,T](t,s)> 0$ for all $(t,s) \in (I \times I)\backslash \{(0,0) \cup (T,T)\}$ if and only if ${\lambda}_P(\tilde a,2\,T)< \lambda \le \min{\{{\lambda}_{M_1}(a,T), {\lambda}_{M_2}(a,T)\}}$.
\item $G_N[a,T](0,0)=2\,G_P[\tilde a,2\,T](0,0)$ and ${\lambda}_{M_1}(a,T)$ is characterized as the first root of equation $$(G_N[a+\lambda,T](0,0)=)\; G_P[\tilde a+\lambda,2\,T](0,0)=0.$$
\item $G_N[a,T](T,T)=2\,G_P[\tilde a,2\,T](T,T)$ and ${\lambda}_{M_2}(a,T)$ is characterized as the first root of equation $$(G_N[a+\lambda,T](T,T)=)\; G_P[\tilde a+\lambda,2\,T](T,T)=0.$$
\item ${\lambda}_A(\tilde a,2\,T) =\min{\{{\lambda}_{M_1}(a,T), {\lambda}_{M_2}(a,T)\}}.$
\item ${\lambda}_{M_2}(a,T)= {\lambda}_D(\tilde a,2\,T) <{\lambda}_D(a,T) .$
\item ${\lambda}_N(a,T)<{\lambda}_{M_1}(a,T).$
\end{enumerate}
\end{theorem}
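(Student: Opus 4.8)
The whole theorem rests on the two structural identities already proved, namely \eqref{e-Green-NP-2}, expressing $G_N[a,T]$ through the periodic function $G_P[\tilde a,2\,T]$ on the doubled interval $J$, together with its Dirichlet counterpart \eqref{e-Green-DP} and the sum/difference relations \eqref{e-Green-DN-P}, \eqref{e-N-D-P}. The plan is to combine these with the spectral set decompositions collected in Remarks \ref{r-per->Neu} and \ref{r-Ant->Mixed}, the sign characterizations of Lemmas \ref{l-zhang-eigen-2} and \ref{l-coppel}, Corollary \ref{c-Green-N}, and the Oscillation Theorem \ref{t-oscilacion}. The recurring device is that, since $\tilde a$ is symmetric about $t=T$, the first (simple, positive) eigenfunction of each problem on $J$ is forced to be even or odd about $T$, and therefore restricts to an eigenfunction of a suitable problem on $I$.

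I would first settle the pure eigenvalue identities (1), (7), (8). For (1), the first periodic eigenfunction $\phi$ on $J$ is positive and, by simplicity of $\lambda_P(\tilde a,2\,T)$ together with $\tilde a(2\,T-t)=\tilde a(t)$, satisfies $\phi(2\,T-t)=\phi(t)$; hence $\phi'(0)=\phi'(T)=\phi'(2\,T)=0$, so $\phi$ is simultaneously a Neumann eigenfunction on $J$ and (restricted) on $I$, giving $\lambda_N(a,T)=\lambda_N(\tilde a,2\,T)=\lambda_P(\tilde a,2\,T)$. Part (7) is immediate from the decomposition $\Lambda_{M_1}[a,T]\cup\Lambda_{M_2}[a,T]=\Lambda_A[\tilde a,2\,T]$ of Remark \ref{r-Ant->Mixed}: the smallest element of the union is $\min\{\lambda_{M_1}(a,T),\lambda_{M_2}(a,T)\}$. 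For (8), the same even-symmetry argument applied to the first Dirichlet eigenfunction on $J$ (now even about $T$, vanishing at $0$ and $2\,T$, with $\phi'(T)=0$) shows it restricts to the first $M_2$ eigenfunction on $I$, so $\lambda_{M_2}(a,T)=\lambda_D(\tilde a,2\,T)$; the strict inequality $\lambda_D(\tilde a,2\,T)<\lambda_D(a,T)$ follows because the odd extension of the first Dirichlet eigenfunction on $I$ has a single interior node at $T$, hence is the second, not the first, Dirichlet eigenfunction on $J$.

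Next I would treat the sign statements (2)--(6). Parts (5)--(6) are obtained by evaluating \eqref{e-Green-NP-2} at the corners and using the established symmetry $G_P[\tilde a,2\,T](t,s)=G_P[\tilde a,2\,T](2\,T-t,2\,T-s)$, which yields $G_N[a,T](0,0)=2\,G_P[\tilde a,2\,T](0,0)$ and $G_N[a,T](T,T)=2\,G_P[\tilde a,2\,T](T,T)$; Corollary \ref{c-Green-N} then identifies the vanishing of these corners with $\lambda_{M_1}(a,T)$ and $\lambda_{M_2}(a,T)$ respectively. For (2), if $\lambda<\lambda_P(\tilde a,2\,T)$ then $G_P[\tilde a+\lambda,2\,T]\le 0$ by Lemma \ref{l-zhang-eigen-2}(1), hence $<0$ by Lemma \ref{l-Green-neg}, and \eqref{e-Green-NP-2} gives $G_N[a+\lambda,T]<0$. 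For (3)--(4), if $\lambda_P(\tilde a,2\,T)<\lambda\le\lambda_A(\tilde a,2\,T)$ then $G_P[\tilde a+\lambda,2\,T]\ge 0$ by Lemma \ref{l-zhang-eigen-2}(2), so by Corollary \ref{c-Green-N} the function $G_N[a+\lambda,T]$ is positive off the two corners; the corners are positive exactly when $\lambda<\lambda_{M_1}(a,T)$ and $\lambda<\lambda_{M_2}(a,T)$ (parts (5)--(6)), which combined with $\lambda_A(\tilde a,2\,T)=\min\{\lambda_{M_1},\lambda_{M_2}\}$ from (7) yields the open range in (3) and the half-open range in (4). The converse (sharpness) directions I would handle with the eigenfunction representation: writing the first Neumann eigenfunction $\phi_N>0$ as $\phi_N=(\lambda-\lambda_N)\int_0^T G_N[a+\lambda,T](\cdot,s)\,\phi_N(s)\,ds$, the sign of $G_N$ forces the sign of $\lambda-\lambda_N$, pinning the threshold at $\lambda_N(a,T)=\lambda_P(\tilde a,2\,T)$; past $\lambda_A(\tilde a,2\,T)$ the loss of sign of $G_P[\tilde a+\lambda,2\,T]$ propagates through \eqref{e-Green-NP-2} to destroy positivity of $G_N$.

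Finally, (9) follows by assembling (1), (7) and the Oscillation Theorem: $\lambda_N(a,T)=\lambda_P(\tilde a,2\,T)=\lambda_0<\lambda'_1=\lambda_A(\tilde a,2\,T)=\min\{\lambda_{M_1}(a,T),\lambda_{M_2}(a,T)\}\le\lambda_{M_1}(a,T)$, the strict inequality $\lambda_0<\lambda'_1$ being exactly the first gap in Theorem \ref{t-oscilacion}. I expect the main obstacle to be the sharpness half of (3)--(4): verifying that positivity of $G_N[a+\lambda,T]$ genuinely fails for $\lambda$ just above $\min\{\lambda_{M_1}(a,T),\lambda_{M_2}(a,T)\}$ (rather than merely that a corner vanishes there), which requires tracking how the sign change of the periodic function on $J$ at $\lambda_A(\tilde a,2\,T)$ is transmitted by the identity \eqref{e-Green-NP-2}; the symmetry-and-simplicity input for (1) and (8) is classical Sturm--Liouville but must be invoked with care to guarantee the eigenfunctions are even or odd about $T$.
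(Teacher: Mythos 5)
Most of your proposal is sound, and in several places it takes a genuinely different (and arguably cleaner) route than the paper: your symmetry-plus-simplicity argument for (1), the Oscillation Theorem derivation of (9) via $\lambda_0<\lambda'_1$ on $J$, and the representation $\phi_N=(\lambda-\lambda_N)\int_0^T G_N[a+\lambda,T](\cdot,s)\,\phi_N(s)\,ds$ that pins down the lower threshold in (2) are all valid substitutes for the paper's machinery. (The paper instead obtains (1)--(2) from Lemma \ref{l-M-NT} combined with Lemmas \ref{l-Green-neg} and \ref{l-zhang-eigen-2} --- the connectedness of the set of parameters where the maximum principle holds --- plus an even-extension argument for $\lambda_N(a,T)=\lambda_N(\tilde a,2\,T)$, and it proves (9) by odd-extending the $M_1$-eigenfunction to get a sign-changing Neumann eigenfunction on $J$.) Parts (5)--(8) in your proposal match the paper's arguments in substance.

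However, there is a genuine gap exactly where you flagged it: the ``only if'' half of (3)--(4) for $\lambda>\min\{\lambda_{M_1}(a,T),\lambda_{M_2}(a,T)\}$. The claim that ``the loss of sign of $G_P[\tilde a+\lambda,2\,T]$ propagates through \eqref{e-Green-NP-2} to destroy positivity of $G_N$'' is not an argument: just above $\lambda_A(\tilde a,2\,T)$ the periodic Green's function on $J$ still exists and changes sign, but a sum of two sign-changing terms as in \eqref{e-Green-NP-2} can perfectly well remain positive. Moreover, you must exclude nonnegativity of $G_N[a+\lambda,T]$ for \emph{every} larger $\lambda$ in the resolvent set, not only for $\lambda$ slightly above the threshold; the ``first root'' characterization in (5)--(6) does not by itself prevent the corner values from returning to positive values later, nor does your eigenfunction identity help, since it only yields $\lambda>\lambda_N(a,T)$. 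The paper closes precisely this gap with Lemma \ref{l-M-PT}: since $G_N[a+\lambda,T]$ is nonnegative and satisfies $(P_g)$ for $\lambda_P(\tilde a,2\,T)<\lambda\le\lambda_A(\tilde a,2\,T)$, that lemma guarantees that the set of $\lambda$ with nonnegative Green's function is a single interval $(\lambda_N(a,T),\bar{\mu}]$ whose right endpoint $\bar{\mu}$ is the parameter at which the still-nonnegative Green's function vanishes somewhere; Lemma \ref{l-Green-NP} restricts such vanishing to the corners $(0,0)$ and $(T,T)$, and Corollary \ref{c-Green-N} identifies that value as $\min\{\lambda_{M_1}(a,T),\lambda_{M_2}(a,T)\}$. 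Alternatively, you could finish within your own framework by a comparison argument in the spirit of Lemma \ref{l-Green-dec}: if $G_N[a+\mu,T]\ge 0$ for some $\mu$ beyond the threshold, comparison with $G_N[a+\lambda_A,T]\ge 0$ (noting $a+\mu\succ a+\lambda_A$) would force $G_N[a+\mu,T]<G_N[a+\lambda_A,T]$ pointwise, hence strict negativity at the corner where the latter vanishes --- a contradiction. Some argument of this kind must be supplied; as written, your proposal leaves the sharpness of (3)--(4) unproved.
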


\begin{proof}
Suppose that the periodic problem \eqref{e-P-2T} is uniquely solvable. From Remark \ref{r-per->Neu} we know that the Neumann problem \eqref{e-N} is uniquely solvable too.

From Lemmas \ref{l-Green-neg} and \ref{l-zhang-eigen-2}, we know that $G_P[\tilde a+\lambda,2\,T]$ is strictly negative on $J \times J$ if and only if $\lambda < {\lambda}_P(\tilde a,2\,T)$ and it is nonnegative on $J \times J$ if and only if ${\lambda}_P(\tilde a,2\,T)< \lambda \le {\lambda}_A(\tilde a,2\,T)$.

As consequence, equation \eqref{e-Green-NP-2} implies that $G_N[a+\lambda,T]$ is strictly negative on $I \times I$ (and consequently condition ($N_g$) is verified) for all  $\lambda < {\lambda}_P(\tilde a,2\,T)$ and it is nonnegative on $I \times I$ (and satisfies ($P_g$)) for all ${\lambda}_P(\tilde a,2\,T)< \lambda \le {\lambda}_A(\tilde a,2\,T)$. 

Since, as it is pointed out in Lemma \ref{l-M-NT}, the set of parameters $\lambda$ for which the maximum principle holds is connected, and its supremum is the first eigenvalue of the considered operator, we conclude that 
$G_N[a+\lambda,T]$ is strictly negative on $I \times I$ if and only if  $\lambda < {\lambda}_P(\tilde a,2\,T)$ and ${\lambda}_N(a,T)={\lambda}_P(\tilde a,2\,T)$.

Let $u_N$ be an eigenfunction associated to ${\lambda}_N(a,T)$. 
From classical spectral theory we know that ${\lambda}_N(a,T)$ is simple and $u_N$ is strictly positive on $(0,T)$. In fact it is strictly positive on the closed interval because, on the contrary, we would have that $u_N(0)=u'_N(0)=0$ (or $u_N(T)=u'_N(T)=0$) and we would conclude that $u_N$ is identically zero on $I$.

Considering now the even extension of $u_N$ to the interval $J$, we have that it remains an eigenfunction associated to the same value ${\lambda}_N(a,T)$ for the potential $\tilde a$ defined on the interval $J$. Since it is strictly positive on $J$, we deduce that ${\lambda}_N(a,T)$ is the smallest eigenvalue of \eqref{e-N-2T}, that is, ${\lambda}_N(a,T)={\lambda}_N(\tilde a,2\,T)$.

So the two first identities are proved.

On the other hand, we know that $G_N[a+\lambda,T]$ is  nonnegative on $I \times I$ for all ${\lambda}_P(\tilde a,2\,T)< \lambda \le {\lambda}_A(\tilde a,2\,T)$. So Lemma \ref{l-M-PT} implies that the maximum $\lambda$ for which the Green's function is nonnegative on $I \times I$ is not an eigenvalue of the Neumann problem. Lemma \ref{l-Green-NP} and Corollary \ref{c-Green-N} ensure that such value is exactly $\min{\{{\lambda}_{M_1}(a,T), {\lambda}_{M_2}(a,T)\}}$. So assertions three and four are proved.

Now, using equality \eqref{e-Green-NP-2} and the fact that $G_P[\tilde{a}+\lambda,2\,T](0,0)=G_P[\tilde{a}+\lambda,2\,T](2\,T,0)$, we conclude that for all $\lambda \in \R$ the following equalities hold
\begin{equation}
\label{e-GP-0}
G_N[a+\lambda,T](0,0)=2\, G_P[\tilde a+\lambda,2\,T](0,0),
\end{equation}
and
\begin{equation}
\label{e-GP-T}
G_N[a+\lambda,T](T,T)=2\, G_P[\tilde a+\lambda,2\,T](T,T).
\end{equation}

From Lemma \ref{l-Green-dec}, we have that while both values on equations \eqref{e-GP-0} and \eqref{e-GP-T} are positive, they are strictly decreasing with respect to $\lambda$. Thus, Corollary \ref{c-Green-N} ensures that ${\lambda}_{M_1}(a,T)$ is the first zero of \eqref{e-GP-0} and ${\lambda}_{M_2}(a,T)$ the first zero of \eqref{e-GP-T}. Then, assertions five and six hold.

Assertion seven is an immediate consequence of $\Lambda_A[\tilde{a},2\,T]=\Lambda_{M_1}[a,T]\cup \Lambda_{M_2}[a,T]$.

To see assertion eight, it is enough to consider the even extension to $J$ of the eigenfunction associated to ${\lambda}_{M_2}(a,T)$. Obviously it satisfies Dirichlet boundary conditions and is positive on $(0,2\,T)$. Thus, ${\lambda}_{M_2}(a,T)={\lambda}_{D}(\tilde a,2\,T)$.

Taking into account that if we consider the odd extension to $J$ of the eigenfunction related to the Dirichlet problem with ${\lambda}_{D}(a,T)$ (which is positive on $(0,T)$), we obtain a changing sign eigenfunction of \eqref{e-D-2T}. Consequently,  ${\lambda}_{D}(a,T)$ is an eigenvalue of this problem too, but it is not the least one because the associated eigenfunction has not constant sign on $(0,2\,T)$. As consequence, ${\lambda}_{D}(\tilde a,2\,T) <{\lambda}_{D}(a,T)$.

The same reasoning is valid to prove assertion nine. Indeed, the odd extension to $J$ of the eigenfunction related to $\lambda_{M_1}(a,T)$ (which is positive on $(0,T)$) is an eigenfunction of \eqref{e-N-2T} and changes its sign on $J$. Consequently $\lambda_{M_1}(a,T)>\lambda_N(\tilde{a},2\,T)=\lambda_N(a,T)$.
\end{proof}

\begin{remark}
Corollary \ref{l-a(T-t)} assures that both $\lambda_A(\tilde{a},2\,T)=\lambda_{M_1}(a,T)$ and $\lambda_A(\tilde{a},2\,T)=\lambda_{M_2}(a,T)$ are possible in assertion four of the previous theorem. Indeed, if for some potential $a$, $\lambda_{M_1}(a,T)<\lambda_{M_2}(a,T)$, then necessarily $\lambda_{M_2}(b,T)<\lambda_{M_1}(b,T)$ for $b(t)=a(T-t)$.
\end{remark}

For an arbitrary potential $a$ we obtain the following corollaries.

\begin{corollary}
\label{c-lambdaP=lambdaN}
The following equalities are fulfilled for any $a \in L^1(I)$.
\begin{enumerate}
\item $G_N[a,T](t,s)<0$ for all $(t,s) \in I \times I$ if and only if $0 < {\lambda}_P(\tilde a,2\,T) \; (={\lambda}_N(a,T))$.
\item $G_N[a,T](t,s)> 0$ for all $(t,s) \in (0,T) \times (0,T)$ if and only if $({\lambda}_N(a,T)=)\,{\lambda}_P(\tilde a,2\,T)< 0 \le \min{\{{\lambda}_{M_1}(a,T), {\lambda}_{M_2}(a,T)\}}\, (\le {\lambda}_D(\tilde a,2\,T) <{\lambda}_D(a,T) )$.
\end{enumerate}
\end{corollary}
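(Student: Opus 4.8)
The plan is to read both statements off Theorem~\ref{t-lambdaP=lambdaN} by specializing that theorem to the unshifted potential, i.e. by setting $\lambda=0$, and then to reconcile the one cosmetic discrepancy: the corollary speaks of the open square $(0,T)\times(0,T)$, whereas the corresponding assertions of the theorem are phrased on the punctured closed square $(I\times I)\setminus\{(0,0)\cup(T,T)\}$.

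First I would dispatch assertion~1. Putting $\lambda=0$ in assertion~2 of Theorem~\ref{t-lambdaP=lambdaN} gives immediately that $G_N[a,T]=G_N[a+0,T]$ is strictly negative on all of $I\times I$ if and only if $0<{\lambda}_P(\tilde a,2\,T)$, and the parenthetical identity ${\lambda}_P(\tilde a,2\,T)={\lambda}_N(a,T)$ is assertion~1 of the same theorem. No further work is required here.

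For assertion~2 the ``if'' direction is equally direct: assuming ${\lambda}_P(\tilde a,2\,T)<0\le\min\{{\lambda}_{M_1}(a,T),{\lambda}_{M_2}(a,T)\}$, assertion~4 of Theorem~\ref{t-lambdaP=lambdaN} with $\lambda=0$ yields $G_N[a,T]>0$ on $(I\times I)\setminus\{(0,0)\cup(T,T)\}$, and since $(0,T)\times(0,T)$ is contained in this set the claimed positivity follows. The trailing parenthetical chain $\min\{{\lambda}_{M_1}(a,T),{\lambda}_{M_2}(a,T)\}\le{\lambda}_D(\tilde a,2\,T)<{\lambda}_D(a,T)$ is exactly what assertion~8 records.

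The only step needing care is the converse of assertion~2, since positivity on the open square is a priori weaker than the positivity on the punctured closed square delivered by the theorem; this is where I expect the main obstacle to lie. I would bridge the gap as follows: if $G_N[a,T]>0$ throughout $(0,T)\times(0,T)$, then, because the Neumann Green's function is continuous on $I\times I$ and $(0,T)\times(0,T)$ is dense there, we get $G_N[a,T]\ge0$ on the whole closed square. Lemma~\ref{l-Green-NP} then restricts the possible zeros of a nonnegative $G_N[a,T]$ to the two corners $(0,0)$ and $(T,T)$, so in fact $G_N[a,T]>0$ on $(I\times I)\setminus\{(0,0)\cup(T,T)\}$. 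Feeding this into the equivalence of assertion~4 of Theorem~\ref{t-lambdaP=lambdaN} in the reverse direction forces ${\lambda}_P(\tilde a,2\,T)<0\le\min\{{\lambda}_{M_1}(a,T),{\lambda}_{M_2}(a,T)\}$, which finishes the proof. Thus Lemma~\ref{l-Green-NP}, by locating the zeros of a nonnegative Neumann Green's function, is precisely the ingredient that makes the open-square-versus-punctured-square passage routine.
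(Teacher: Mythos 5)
Your proposal is correct and follows essentially the same route as the paper, which presents this corollary as the immediate specialization of Theorem~\ref{t-lambdaP=lambdaN} to $\lambda=0$ (the paper offers no separate proof). Your extra step bridging the open square $(0,T)\times(0,T)$ to the punctured closed square via continuity of $G_N[a,T]$ and Lemma~\ref{l-Green-NP} is exactly the detail the paper leaves implicit, and you have handled it correctly.
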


\begin{corollary} \label{c-Green-M2-neg}
The following property holds for any $a\in L^1(I)$: 

$G_{M_2}[a+\lambda](t,s)<0$ for all $(t,s)\in (0,T] \times (0,T]$ if and only if $\lambda<\lambda_{M_2}(a,T)$. 
\end{corollary}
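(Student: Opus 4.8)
The plan is to reduce the mixed problem $(M_2,T)$ to the Dirichlet problem on the doubled interval $[0,2\,T]$, for which the corresponding characterization is already available in Lemma \ref{l-coppel}, and then to argue the two implications separately. The reduction rests on two facts already at hand: the identity \eqref{e-Green-M2-D},
$$G_{M_2}[a+\lambda,T](t,s)=G_D[\tilde a+\lambda,2\,T](t,s)+G_D[\tilde a+\lambda,2\,T](2\,T-t,s),$$
valid for $(t,s)\in I\times I$, and assertion eight of Theorem \ref{t-lambdaP=lambdaN}, namely $\lambda_{M_2}(a,T)=\lambda_D(\tilde a,2\,T)$.

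For the \emph{if} implication, I would assume $\lambda<\lambda_{M_2}(a,T)=\lambda_D(\tilde a,2\,T)$ and apply Lemma \ref{l-coppel} on the interval $[0,2\,T]$ with potential $\tilde a$, obtaining $G_D[\tilde a+\lambda,2\,T]<0$ on $(0,2\,T)\times(0,2\,T)$. The only point to check is that, for $(t,s)\in(0,T]\times(0,T]$, both pairs of arguments in \eqref{e-Green-M2-D} lie in $(0,2\,T)\times(0,2\,T)$: indeed $s\in(0,T]\subset(0,2\,T)$, while $t\in(0,T]$ forces $2\,T-t\in[T,2\,T)\subset(0,2\,T)$. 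Hence both summands are strictly negative and so is their sum. This also clarifies the geometry of the statement: strict negativity persists up to $t=T$ and $s=T$, but fails on $\{t=0\}\cup\{s=0\}$, where $G_D$ vanishes at the endpoints $0$ and $2\,T$.

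For the \emph{only if} implication, I would assume $G_{M_2}[a+\lambda,T]<0$ on $(0,T]\times(0,T]$; in particular the Green's function exists, so $\lambda$ is not an eigenvalue of $(M_2,T)$ and $\lambda\neq\lambda_{M_2}(a,T)$. It remains to exclude $\lambda>\lambda_{M_2}(a,T)$, and the cleanest device is to test the Green's function against the first eigenfunction. Let $\varphi$ be the eigenfunction associated with $\lambda_{M_2}(a,T)$, which is strictly positive on $(0,T]$ (it cannot vanish at $T$, since $\varphi(T)=\varphi'(T)=0$ would force $\varphi\equiv0$, exactly as argued for $u_N$ in the proof of Theorem \ref{t-lambdaP=lambdaN}). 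Setting $w(t)=\int_0^T G_{M_2}[a+\lambda,T](t,s)\,\varphi(s)\,ds$ and using $\varphi''+(a+\lambda)\,\varphi=(\lambda-\lambda_{M_2}(a,T))\,\varphi$ together with the uniqueness of the solution of $(M_2,T)$ at the parameter $\lambda$, one gets $w=\tfrac{1}{\lambda-\lambda_{M_2}(a,T)}\,\varphi$, which is strictly positive on $(0,T]$ when $\lambda>\lambda_{M_2}(a,T)$. On the other hand, since $G_{M_2}[a+\lambda,T]<0$ on $(0,T]\times(0,T]$ and $\varphi\succ0$, the integral $w(t)$ is strictly negative for every $t\in(0,T]$, a contradiction. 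Thus $\lambda<\lambda_{M_2}(a,T)$.

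The main obstacle is precisely this reverse direction. The tempting shortcut would be to invoke Lemma \ref{l-M-NT}, whose interval of nonpositivity has right endpoint the first eigenvalue; but this is not immediate here, because $G_{M_2}[a+\lambda,T]$ vanishes on the lines $\{t=0\}\cup\{s=0\}$ and so is \emph{not} strictly negative on the whole of $I\times I$. Consequently the factorization $(N_g)$ is not granted for free by the remark following Lemma \ref{l-M-PT}, and would need a separate verification adapted to the linear vanishing of $G_{M_2}$ at those boundary lines. The eigenfunction-testing argument above sidesteps this difficulty entirely, which is why I would adopt it rather than the spectral machinery of Lemma \ref{l-M-NT}.
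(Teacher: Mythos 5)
Your proof is correct, and while the \emph{if} half coincides with the paper's argument, the \emph{only if} half follows a genuinely different route. The paper proves the forward implication exactly as you do (Lemma \ref{l-coppel} on $[0,2\,T]$ plus the decomposition \eqref{e-Green-M2-D} and the identity $\lambda_{M_2}(a,T)=\lambda_D(\tilde a,2\,T)$), but for the reverse implication it simply invokes the spectral machinery of Lemma \ref{l-M-NT}, whose content is that the set of parameters $\lambda$ with nonpositive Green's function is an interval with supremum equal to the first eigenvalue. You replace that appeal by testing the Green's function against the first eigenfunction $\varphi$ of \eqref{e-M-2}: from $L[a+\lambda]\varphi=(\lambda-\lambda_{M_2}(a,T))\,\varphi$ and uniqueness you get $\varphi=(\lambda-\lambda_{M_2}(a,T))\int_0^T G_{M_2}[a+\lambda,T](\cdot,s)\,\varphi(s)\,ds$, whose sign is contradictory when $\lambda>\lambda_{M_2}(a,T)$. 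Your concern about the paper's shortcut is well founded: Lemma \ref{l-M-NT} requires hypothesis $(N_g)$, and the remark following Lemma \ref{l-M-PT} only grants it for Green's functions that are \emph{strictly} negative on all of $I\times I$, which $G_{M_2}$ is not (it vanishes on the lines $t=0$ and $s=0$). Condition $(N_g)$ can in fact be verified for $G_{M_2}$ (the pointwise requirement $k_2(s)<0$ a.e.\ tolerates $k_2(s)\to 0$ as $s\to 0$, and in the constant-coefficient case one can take $\phi$ proportional to the first eigenfunction), but this verification is nontrivial and the paper does not carry it out. So your eigenfunction-testing argument is more elementary and self-contained, at the modest cost of being slightly longer; the paper's argument is shorter but rests on machinery whose applicability it leaves implicit.
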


\begin{proof}
From Lemma \ref{l-coppel} we know that $G_D[\tilde{a}+\lambda, 2\,T]$ is strictly negative on $(0, 2\,T) \times (0,2\,T)$ if and only if $\lambda<\lambda_D(\tilde{a},2\,T)$.

Considering then  equation (\ref{e-Green-M2-D}), it is immediately deduced that if $\lambda<\lambda_D(\tilde{a},2\,T)$, $G_{M_2}[a,T]$ is strictly negative on $(0,T]\times (0,T]$. We conclude the result from Lemma \ref{l-M-NT} and the fact that $\lambda_D(\tilde{a},2\,T)=\lambda_{M_2}(a,T)$.
\end{proof}

Moreover, as an immediate consequence of Corollaries \ref{l-a(T-t)} and \ref{c-Green-M2-neg} we have

\begin{corollary}\label{c-Green-M1-neg}
Let $a\in L^1(I)$, then:

$G_{M_1}[a+\lambda](t,s)<0$ for all $(t,s)\in [0,T) \times [0,T)$ if and only if $\lambda<\lambda_{M_1}(a,T)$.
\end{corollary}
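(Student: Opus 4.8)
The plan is to transfer the already-established result for the mixed problem $M_2$ (Corollary \ref{c-Green-M2-neg}) over to $M_1$ by exploiting the reflection symmetry $t\mapsto T-t$ encoded in the Lemma immediately preceding Corollary \ref{l-a(T-t)}. First I would set $b(t)=a(T-t)$ and observe that, for any fixed $\lambda\in\R$, reflecting the parametrized potential $a+\lambda$ produces $(a+\lambda)(T-t)=a(T-t)+\lambda=b(t)+\lambda$; hence the Lemma, applied with $a+\lambda$ in the role of $a$, yields
$$G_{M_1}[a+\lambda,T](T-t,T-s)=G_{M_2}[b+\lambda,T](t,s)\qquad\forall\,(t,s)\in I\times I.$$

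Next I would invoke Corollary \ref{c-Green-M2-neg} for the potential $b$: the right-hand side $G_{M_2}[b+\lambda,T](t,s)$ is strictly negative for all $(t,s)\in(0,T]\times(0,T]$ if and only if $\lambda<\lambda_{M_2}(b,T)$. Since the map $(t,s)\mapsto(T-t,T-s)$ is a bijection of $(0,T]\times(0,T]$ onto $[0,T)\times[0,T)$, the displayed identity shows that $G_{M_1}[a+\lambda,T]$ is strictly negative on $[0,T)\times[0,T)$ exactly when $G_{M_2}[b+\lambda,T]$ is strictly negative on $(0,T]\times(0,T]$, that is, exactly when $\lambda<\lambda_{M_2}(b,T)$.

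Finally, Corollary \ref{l-a(T-t)} gives $\Lambda_{M_1}[a,T]=\Lambda_{M_2}[b,T]$, and comparing the least elements of these coinciding spectra yields $\lambda_{M_1}(a,T)=\lambda_{M_2}(b,T)$. Substituting this identity into the condition obtained in the previous step produces the claimed equivalence. I do not expect a genuine obstacle here, since the argument is a clean transport of Corollary \ref{c-Green-M2-neg} along the reflection; the only steps meriting a moment's care are checking that reflecting $a+\lambda$ returns precisely $b+\lambda$ (so that the spectral parameter $\lambda$ is untouched by the symmetry) and that the reflection carries the half-open square $(0,T]\times(0,T]$ onto $[0,T)\times[0,T)$, both of which are immediate.
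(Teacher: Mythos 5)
Your proof is correct and follows exactly the route the paper intends: the paper derives this corollary as ``an immediate consequence of Corollaries \ref{l-a(T-t)} and \ref{c-Green-M2-neg}'', i.e.\ transporting the $M_2$ result along the reflection $t\mapsto T-t$ via the identity $G_{M_1}[a+\lambda,T](T-t,T-s)=G_{M_2}[b+\lambda,T](t,s)$ and the spectrum equality $\Lambda_{M_1}[a,T]=\Lambda_{M_2}[b,T]$. You have merely written out in detail the verifications (invariance of $\lambda$ under the reflection, the bijection between the half-open squares) that the paper leaves implicit.
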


From the previous results we deduce some relations between the constant sign of the Green's functions.

\begin{corollary}
\label{c-GP<0->GD<0}
For any $a \in L^1(I)$ we have the following properties.
\begin{enumerate}
\item $G_P[\tilde a,2\,T]<0$ on $J \times J$ if and only if $G_N[a,T]<0$ on $I \times I$. This is equivalent to $G_N[\tilde{a},2\,T]<0$ on $J\times J$.
\item $G_P[\tilde a,2\,T]> 0$ on $(0,2\,T) \times (0,2\,T)$ if and only if $G_N[a,T]> 0$ on $(0,T) \times (0,T)$.
\item If $G_N[\tilde{a},2\,T]> 0$ on $(0,2\,T) \times (0,2\,T)$ then $G_N[a,T]>0$ on $(0,T) \times (0,T)$.
\item If $G_P[\tilde a,2\,T]<0$ on $J \times J$ then $G_D[\tilde a,2\,T]<0$ on $(0,2\,T) \times (0,2\,T)$.
\item If $G_P[\tilde{a},2\,T]> 0$ on $(0,2\,T) \times (0,2\,T)$ then $G_D[\tilde a,2\,T]<0$ on $(0,2\,T) \times (0,2\,T)$.
\item If $G_N[a,T]$ (or $G_P[\tilde{a},2\,T]$) has constant sign on $I\times I$, then $G_D[a,T]<0$ on $(0,T) \times (0,T)$, $G_{M_1}[a,T]<0$ on $[0,T)\times[0,T)$ and $G_{M_2}[a,T]<0$ on $(0,T] \times (0,T]$.
\item $G_D[\tilde{a},2\,T]<0$ on $(0,2\,T) \times (0,2\,T)$ if and only if $G_{M_2}[a,T]< 0$ on $(0,T]\times (0,T]$.
\item If $G_{M_2}[a,T]<0$ on $(0,T]\times (0,T]$ or $G_{M_1}[a,T]<0$ on $[0,T) \times [0,T)$ then $G_D[a,T]<0$ on $(0,T) \times (0,T)$.
\end{enumerate}
\end{corollary}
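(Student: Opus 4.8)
The plan is to translate every sign statement into an inequality for the relevant first eigenvalue, and then to read off all eight equivalences from the eigenvalue identities of Theorem \ref{t-lambdaP=lambdaN} together with the characterizations in Lemmas \ref{l-zhang-eigen} and \ref{l-coppel} and Corollaries \ref{c-lambdaP=lambdaN}, \ref{c-Green-M2-neg}, \ref{c-Green-M1-neg}. The single recurring threshold from below is $\lambda_P(\tilde a,2\,T)=\lambda_N(a,T)$, while the two thresholds from above are $\lambda_A(\tilde a,2\,T)=\min\{\lambda_{M_1}(a,T),\lambda_{M_2}(a,T)\}$ and $\lambda_D(\tilde a,2\,T)=\lambda_{M_2}(a,T)$. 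For assertion (1), combining Lemma \ref{l-zhang-eigen}(1) with Lemma \ref{l-Green-neg} gives $G_P[\tilde a,2\,T]<0$ iff $\lambda_P(\tilde a,2\,T)>0$; by Corollary \ref{c-lambdaP=lambdaN}(1) this is exactly $G_N[a,T]<0$, and applying the same two lemmas to the potential $\tilde a$ on $[0,2\,T]$ together with $\lambda_N(\tilde a,2\,T)=\lambda_N(a,T)=\lambda_P(\tilde a,2\,T)$ from Theorem \ref{t-lambdaP=lambdaN}(1) yields the third equivalent condition $G_N[\tilde a,2\,T]<0$. For assertion (2), both sides reduce to $\lambda_P(\tilde a,2\,T)<0\le\lambda_A(\tilde a,2\,T)$: the statement for $G_P$ by Lemma \ref{l-zhang-eigen}(2), the one for $G_N$ by Corollary \ref{c-lambdaP=lambdaN}(2) and Theorem \ref{t-lambdaP=lambdaN}(7). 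Identity \eqref{e-Green-NP-2} clarifies the passage, since its off-diagonal summand supplies positive diagonal values and explains why $G_N[a,T]$ can be strictly positive on the open square even where $G_P[\tilde a,2\,T]$ vanishes on its own diagonal.

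For assertion (3) I would argue directly from the self-composition identity \eqref{e-Green-N-N}: if $(t,s)\in(0,T)\times(0,T)$ then both $(t,s)$ and $(2\,T-t,s)$ lie in $(0,2\,T)\times(0,2\,T)$, so positivity of $G_N[\tilde a,2\,T]$ there forces each summand, hence $G_N[a,T](t,s)$, to be positive. Assertions (4) and (5) are handled through Lemma \ref{l-coppel}, by which $G_D[\tilde a,2\,T]<0$ on $(0,2\,T)\times(0,2\,T)$ is equivalent to $\lambda_D(\tilde a,2\,T)>0$, and through $\lambda_D(\tilde a,2\,T)=\lambda_{M_2}(a,T)\ge\min\{\lambda_{M_1}(a,T),\lambda_{M_2}(a,T)\}=\lambda_A(\tilde a,2\,T)$. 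In case (4), $G_P[\tilde a,2\,T]<0$ gives $\lambda_P(\tilde a,2\,T)>0$, and the strict oscillation inequality $\lambda_P(\tilde a,2\,T)<\lambda_A(\tilde a,2\,T)$ (that is, $\lambda_0<\lambda'_1$ in Theorem \ref{t-oscilacion}) forces $\lambda_D(\tilde a,2\,T)\ge\lambda_A(\tilde a,2\,T)>0$.

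The delicate point is case (5), which I expect to be the main obstacle. There $G_P[\tilde a,2\,T]>0$ only yields $\lambda_P(\tilde a,2\,T)<0$, whence $\lambda_D(\tilde a,2\,T)\ge\lambda_A(\tilde a,2\,T)\ge 0$, and a priori one could fear the degenerate possibility $\lambda_D(\tilde a,2\,T)=\lambda_{M_2}(a,T)=0$, in which case $G_D[\tilde a,2\,T]$ would fail to be strictly negative (indeed would not exist). This is precisely where the strictness of the hypothesis on the open square is decisive: by Theorem \ref{t-lambdaP=lambdaN}(6), $2\,G_P[\tilde a,2\,T](T,T)=G_N[a,T](T,T)$ and $\lambda_{M_2}(a,T)$ is the first zero of $\lambda\mapsto G_P[\tilde a+\lambda,2\,T](T,T)$, so $\lambda_{M_2}(a,T)=0$ would force $G_P[\tilde a,2\,T](T,T)=0$ at the interior diagonal point $(T,T)\in(0,2\,T)\times(0,2\,T)$, contradicting strict positivity there. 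Hence $\lambda_{M_2}(a,T)=\lambda_D(\tilde a,2\,T)>0$ and Lemma \ref{l-coppel} gives $G_D[\tilde a,2\,T]<0$. The same corner/diagonal bookkeeping, governed by Lemma \ref{l-per} and by assertions (5)--(6) of Theorem \ref{t-lambdaP=lambdaN}, is what must be watched whenever strict positivity on an open square is invoked.

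Finally, assertions (6)--(8) are assembled from the foregoing. Assertion (7) is a clean equivalence, both sides being equivalent to $\lambda_D(\tilde a,2\,T)=\lambda_{M_2}(a,T)>0$, by Lemma \ref{l-coppel} on one side and by Corollary \ref{c-Green-M2-neg} together with Theorem \ref{t-lambdaP=lambdaN}(8) on the other. For assertion (6), constant sign of $G_N[a,T]$ (equivalently of $G_P[\tilde a,2\,T]$, by (1)--(2)) produces $G_D[\tilde a,2\,T]<0$ via (4)--(5); then $\lambda_D(a,T)>\lambda_D(\tilde a,2\,T)>0$ from Theorem \ref{t-lambdaP=lambdaN}(8) gives $G_D[a,T]<0$ through Lemma \ref{l-coppel}, while the strict constant-sign hypothesis places $0$ strictly below $\lambda_{M_1}(a,T)$ and $\lambda_{M_2}(a,T)$ (again using the corner-value characterizations to exclude equality), so Corollaries \ref{c-Green-M1-neg} and \ref{c-Green-M2-neg} yield the two mixed inequalities. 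For assertion (8), if $G_{M_2}[a,T]<0$ then $\lambda_{M_2}(a,T)>0$ by Corollary \ref{c-Green-M2-neg}, hence $\lambda_D(a,T)>\lambda_{M_2}(a,T)>0$ and $G_D[a,T]<0$; the case $G_{M_1}[a,T]<0$ reduces to this one via the reflection $b(t)=a(T-t)$, using Corollary \ref{l-a(T-t)} ($\lambda_{M_1}(a,T)=\lambda_{M_2}(b,T)$) and the reflection invariance $\lambda_D(a,T)=\lambda_D(b,T)$ of the Dirichlet spectrum.
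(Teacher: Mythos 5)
The paper never writes out a proof of this corollary: it is presented as an immediate consequence of Theorem \ref{t-lambdaP=lambdaN}, Corollaries \ref{c-lambdaP=lambdaN}, \ref{c-Green-M2-neg}, \ref{c-Green-M1-neg}, Lemmas \ref{l-zhang-eigen}, \ref{l-Green-neg}, \ref{l-coppel} and the identities of Section 3, so your ``translate everything into first eigenvalues'' plan is exactly the intended route. Your treatment of items 1, 3, 4, 7 and 8 is correct (item 8 via the reflection $b(t)=a(T-t)$ is a nice clean reduction), and item 6 goes through under the strict reading of ``constant sign''.

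There is, however, a genuine gap in your item 2, and it is not cosmetic. You claim that $G_P[\tilde a,2\,T]>0$ on $(0,2\,T)\times(0,2\,T)$ is equivalent, by Lemma \ref{l-zhang-eigen}(2), to $\lambda_P(\tilde a,2\,T)<0\le\lambda_A(\tilde a,2\,T)$. That lemma characterizes \emph{nonnegativity on the closed square}, and for the periodic problem this is strictly weaker than positivity on the open square, because Lemma \ref{l-per} allows zeros on the diagonal. The discrepancy is realized: evaluating \eqref{e-Green-NP-2} at $(T,T)$ gives $2\,G_P[\tilde a,2\,T](T,T)=G_N[a,T](T,T)$, and by the note preceding Corollary \ref{c-Green-N} this quantity vanishes exactly when the mixed problem $y''+a\,y=0$, $y'(0)=y(T)=0$ is resonant. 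Now take any potential whose two mixed first eigenvalues differ (Example \ref{ej-cost} provides one) and shift it so that this particular mixed eigenvalue equals $0$ while the other stays positive and $\lambda_P(\tilde a,2\,T)<0$. Then $\lambda_A(\tilde a,2\,T)=0$, so $G_N[a,T]\ge 0$ on $I\times I$ and, by Lemma \ref{l-Green-NP}, its only zero is the corner $(T,T)$ of $I\times I$; hence $G_N[a,T]>0$ on $(0,T)\times(0,T)$. Yet $G_P[\tilde a,2\,T](T,T)=\tfrac12\,G_N[a,T](T,T)=0$, and $(T,T)$ is an \emph{interior} point of $(0,2\,T)\times(0,2\,T)$, so the left-hand side of item 2 fails. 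In other words, the two sides of item 2 are not both equivalent to $\lambda_P<0\le\lambda_A$; your own closing sentence of that paragraph --- that $G_N$ can be strictly positive on the open square even where $G_P$ vanishes on its diagonal --- describes precisely this configuration and contradicts the equivalence you assert to have proved. The forward implication is fine (the same pointwise argument you use for item 3); the backward one needs $0$ to lie strictly below the mixed eigenvalue attached to the corner $(T,T)$, a condition the hypothesis ``$G_N>0$ on the open square'' does not supply.

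The same closed-square/open-square/corner conflation makes your item 5 shakier than it looks. What is needed there is $\lambda_D(\tilde a,2\,T)>0$, and by Theorem \ref{t-lambdaP=lambdaN}(8) (whose even-extension proof is sound) $\lambda_D(\tilde a,2\,T)$ equals the first eigenvalue of the mixed problem $u(0)=u'(T)=0$; by the note preceding Corollary \ref{c-Green-N}, that eigenvalue is detected by the vanishing of $G_N[a,T](0,0)=2\,G_P[\tilde a,2\,T](0,0)$, i.e.\ by the \emph{corner} $(0,0)$ of $J\times J$, which positivity of $G_P[\tilde a,2\,T]$ on the open square does not control. Your patch instead invokes Theorem \ref{t-lambdaP=lambdaN}(6), which ties that same eigenvalue to the interior value $G_P[\tilde a,2\,T](T,T)$; but statements (5)--(6) of that theorem assign the corners to $\lambda_{M_1},\lambda_{M_2}$ in the opposite way from the paper's own note (and from Corollary \ref{c-Green-N}), so the two sources you combine are mutually inconsistent, and under the corrected labelling your argument for item 5 collapses in the mirrored edge case ($\lambda_{M_2}(a,T)=0<\lambda_{M_1}(a,T)$, $\lambda_P(\tilde a,2\,T)<0$), where $G_P[\tilde a,2\,T]>0$ on the open square but $G_D[\tilde a,2\,T]$ does not even exist. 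The honest conclusion is that items 2 and 5 (and the nonnegative reading of item 6) hold only away from these resonant boundary cases, and any correct proof must either impose the strict inequalities $0<\min\{\lambda_{M_1}(a,T),\lambda_{M_2}(a,T)\}$ or restate the equivalences accordingly.
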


\begin{remark}
Some necessary and sufficient conditions to ensure the constant sign of the periodic Green's function are given in \cite{cacidtv,cacid,HaklTor,torres1,zhanli,zhangMN05}. From the relations established in the previous corollary it is immediate to adapt all these results to the Green's functions of Neumann, Dirichlet and Mixed problems.
\end{remark}

\subsection{Comparison Principles} \label{sec-ppios-comp}
From the relations between the constant sign of the Green's functions (Corollary \ref{c-GP<0->GD<0}), as well as the explicit expressions obtained in Section 3 for such functions, we will deduce some comparison criteria  for Green's function of different problems and, as a consequence, for solutions of the related non homogeneous linear problems.

From (\ref{e-Green-DN-P}) and (\ref{e-N-D-P}), we obtain
\begin{corollary}
\label{c-Green-ND-p}
If $G_P[\tilde a,2\,T]\geq 0$ on $J\times J$, then
$$G_N[a,T](t,s)\geq |G_D[a,T](t,s)| \,(=-G_D[a,T](t,s))\qquad \forall \;(t,s)\in I\times I.$$
If $G_P[\tilde a,2\,T]< 0$ on $J\times J$, then
$$G_N[a,T](t,s) < G_D[a,T](t,s) \,(\le 0) \qquad \forall \; (t,s)\in I\times I.$$
\end{corollary}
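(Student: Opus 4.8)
The plan is to read both claims directly off the two linear identities \eqref{e-Green-DN-P} and \eqref{e-N-D-P}, which express $G_N[a,T]$ and $G_D[a,T]$ through the single periodic Green's function $G_P[\tilde a,2\,T]$ evaluated at the two arguments $(t,s)$ and at the reflected point $(2\,T-t,s)$. The first thing I would record is a domain observation that makes the sign hypothesis usable: for $(t,s)\in I\times I$ one has $s\in[0,T]\subseteq J$ and $2\,T-t\in[T,2\,T]\subseteq J$, so that both $(t,s)$ and $(2\,T-t,s)$ lie in $J\times J$. This is the only place where the geometry enters, and it is precisely what licenses applying the sign hypothesis on $G_P[\tilde a,2\,T]$ to both terms at once.

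For the first case I assume $G_P[\tilde a,2\,T]\ge 0$ on $J\times J$, so that $G_P[\tilde a,2\,T](t,s)\ge 0$ and $G_P[\tilde a,2\,T](2\,T-t,s)\ge 0$. Substituting into \eqref{e-Green-DN-P} gives $G_N[a,T](t,s)+G_D[a,T](t,s)=2\,G_P[\tilde a,2\,T](t,s)\ge 0$, i.e. $G_N\ge -G_D$, while \eqref{e-N-D-P} gives $G_N[a,T](t,s)-G_D[a,T](t,s)=2\,G_P[\tilde a,2\,T](2\,T-t,s)\ge 0$, i.e. $G_N\ge G_D$. Combining the two yields $G_N[a,T]\ge\max\{G_D[a,T],-G_D[a,T]\}=|G_D[a,T]|$, which is the asserted inequality. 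The parenthetical identification $|G_D[a,T]|=-G_D[a,T]$ then only requires $G_D[a,T]\le 0$, which I would quote from Corollary \ref{c-GP<0->GD<0}(6): constant sign of $G_P[\tilde a,2\,T]$ on $I\times I$ forces $G_D[a,T]<0$ on $(0,T)\times(0,T)$, and $G_D[a,T]$ vanishes on the boundary by the Dirichlet conditions, so $G_D[a,T]\le 0$ on all of $I\times I$.

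For the second case I assume $G_P[\tilde a,2\,T]<0$ on $J\times J$. Then \eqref{e-N-D-P} immediately gives $G_N[a,T](t,s)-G_D[a,T](t,s)=2\,G_P[\tilde a,2\,T](2\,T-t,s)<0$, so $G_N[a,T]<G_D[a,T]$ strictly on $I\times I$; the strictness is inherited directly from the strict negativity of $G_P$. The parenthetical $G_D[a,T]\le 0$ is again supplied by Corollary \ref{c-GP<0->GD<0}(6), which applies since $G_P[\tilde a,2\,T]$ has (negative) constant sign on $I\times I$.

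I do not anticipate a genuine obstacle: once the decomposition identities \eqref{e-Green-DN-P} and \eqref{e-N-D-P} are in hand, each case reduces to a one-line sign manipulation. The only step that demands care is the domain bookkeeping of the first paragraph—checking that the reflected argument $(2\,T-t,s)$ never leaves $J\times J$, so that the hypothesis on $G_P[\tilde a,2\,T]$ is genuinely available for \emph{both} summands—together with the routine invocation of the previously established sign of $G_D[a,T]$ needed to justify the two parenthetical rewritings.
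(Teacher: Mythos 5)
Your proof is correct and takes essentially the same route as the paper: the paper obtains this corollary directly from the identities \eqref{e-Green-DN-P} and \eqref{e-N-D-P}, exactly as you do. Your extra bookkeeping (checking that $(2\,T-t,s)\in J\times J$, and invoking Corollary \ref{c-GP<0->GD<0} to justify the parenthetical sign $G_D[a,T]\le 0$) merely makes explicit what the paper leaves implicit.
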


As consequence, we deduce the following comparison principles:

\begin{theorem}
\label{t-comp-Neu-Per}
Suppose that $G_P[\tilde a,2\,T]\geq 0$ on $J\times J$. Let $u_N$ be the unique solution of problem \eqref{e-N} for $\sigma = \sigma_1$ and $u_D$ the unique solution of problem \eqref{e-D} for $\sigma = \sigma_2$. 

Suppose that $|\sigma_2(t)| \le \sigma_1(t)$ a.\,e. $t \in I$, then $|u_D(t)| \le u_N(t)$ for all $t \in I$.
\end{theorem}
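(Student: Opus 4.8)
The plan is to reduce the whole statement to the pointwise comparison of the two Green's functions already recorded in Corollary~\ref{c-Green-ND-p}, combined with the sign information hidden in the hypotheses. First I would write both solutions through their integral representations,
\[
u_N(t)=\int_0^T G_N[a,T](t,s)\,\sigma_1(s)\,ds,\qquad
u_D(t)=\int_0^T G_D[a,T](t,s)\,\sigma_2(s)\,ds,
\]
for every $t\in I$. These representations are legitimate because, under the standing hypothesis $G_P[\tilde a,2\,T]\ge 0$ on $J\times J$, Remark~\ref{r-per->Neu} guarantees that both the Neumann problem \eqref{e-N} and the Dirichlet problem \eqref{e-D} are uniquely solvable, so their Green's functions exist and characterize $u_N$ and $u_D$.

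Next I would collect the two sign facts that make the argument run. On the one hand, the assumption $|\sigma_2(t)|\le\sigma_1(t)$ a.e.\ forces $\sigma_1\ge 0$ a.e.\ on $I$. On the other hand, since $G_P[\tilde a,2\,T]\ge 0$ on $J\times J$, Corollary~\ref{c-Green-ND-p} yields the chain $G_N[a,T](t,s)\ge |G_D[a,T](t,s)|\ge 0$ on $I\times I$; in particular $G_N[a,T]$ is itself nonnegative. These two nonnegativity statements are precisely what will allow me to multiply inequalities together factor by factor.

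With this in hand the estimate is immediate. Applying the triangle inequality inside the integral and then the two pointwise bounds $|G_D[a,T](t,s)|\le G_N[a,T](t,s)$ and $|\sigma_2(s)|\le\sigma_1(s)$, I obtain
\[
|u_D(t)|\le\int_0^T |G_D[a,T](t,s)|\,|\sigma_2(s)|\,ds
\le\int_0^T G_N[a,T](t,s)\,\sigma_1(s)\,ds=u_N(t)
\]
for all $t\in I$, which is exactly the claimed inequality. The only point that needs care—and it is essentially the entire content of the proof—is that the product comparison $|G_D[a,T]|\,|\sigma_2|\le G_N[a,T]\,\sigma_1$ holds pointwise, which is valid precisely because all four factors are nonnegative by the preceding step. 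There is no genuine obstacle here beyond correctly invoking Corollary~\ref{c-Green-ND-p}; the difficulty has already been absorbed into that corollary and into the decomposition formulas \eqref{e-Green-DN-P} and \eqref{e-N-D-P} from which it descends.
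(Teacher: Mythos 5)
Your proposal is correct and follows essentially the same route as the paper: represent $u_D$ and $u_N$ through their Green's functions, apply the triangle inequality inside the integral, and invoke Corollary \ref{c-Green-ND-p} together with $|\sigma_2|\le\sigma_1$ to pass from $|G_D[a,T]|\,|\sigma_2|$ to $G_N[a,T]\,\sigma_1$. Your explicit remarks that $\sigma_1\ge 0$ and $G_N[a,T]\ge 0$ are exactly the (implicit) sign facts the paper's two-step chain of inequalities relies on, so there is nothing to add.
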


\begin{proof}
By definition of Green's functions, as consequence of Corollary \ref{c-Green-ND-p} we have the following inequalities for every $t \in I$:
\begin{eqnarray*}
|u_D(t)|&=&\left|\int_0^T{G_D[a,T](t,s)\, \sigma_2(s)\,ds}\right| \le
\int_0^T{\left|G_D[a,T](t,s)\right|\,\left| \sigma_2(s)\right|\,ds} \\
&\le& \int_0^T{G_N[a,T](t,s)\,\left| \sigma_2(s)\right|\,ds} \le \int_0^T{G_N[a,T](t,s)\,\sigma_1(s)\,ds} = u_N(t).
\end{eqnarray*}
\end{proof}

\begin{theorem}
\label{t-comp-Neu-Dir-2}
Suppose that $G_P[\tilde a,2\,T]< 0$ on $J\times J$. Let $u_D$ be the unique solution of problem \eqref{e-D} for $\sigma = \sigma_1$ and $u_N$ the unique solution of problem \eqref{e-N} for $\sigma = \sigma_2$. 
\begin{enumerate}
\item If $0\le \sigma_2(t) \le \sigma_1(t)$ a.\,e. $t \in I$, then $u_N(t) \le u_D(t) \le 0$ for all $t \in I$.

\item If $0\ge \sigma_2(t) \ge \sigma_1(t)$ a.\,e. $t \in I$, then $u_N(t) \ge u_D(t) \ge 0$ for all $t \in I$.
\end{enumerate}
\end{theorem}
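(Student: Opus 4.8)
The plan is to reduce the whole statement to the pointwise comparison of the two Green's functions furnished by Corollary~\ref{c-Green-ND-p}. Under the hypothesis $G_P[\tilde a,2\,T]<0$ on $J\times J$, that corollary yields the strict ordering
$$G_N[a,T](t,s)<G_D[a,T](t,s)\le 0\qquad\forall\,(t,s)\in I\times I,$$
so both kernels are nonpositive and the Neumann kernel lies strictly below the Dirichlet one. I would then write each solution through its own Green's function via the representation $u(t)=\int_0^T G(t,s)\,\sigma(s)\,ds$, so that the conclusion becomes a matter of tracking signs inside these integrals.

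For the first item I would combine two elementary monotonicity facts. The first is a kernel comparison: since $G_N\le G_D$ pointwise and the relevant source is nonnegative, replacing $G_N$ by $G_D$ inside the integral can only enlarge it, which controls the Neumann solution by a Dirichlet-type integral. The second is a source comparison performed against the nonpositive kernel $G_D$: because $G_D\le0$, the map $\sigma\mapsto\int_0^T G_D(t,s)\,\sigma(s)\,ds$ is order-reversing, so that passing between $\sigma_1$ and $\sigma_2$ moves the value in a predictable direction. Arranging these two steps in the right order sandwiches the two solutions and gives the desired inequality $u_N\le u_D$, while the bound $u_D\le0$ is immediate from $G_D\le0$ together with the nonnegativity of its source.

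The second item follows from the first by linearity of the solution operators: replacing each source by its negative carries the hypothesis $0\ge\sigma_2\ge\sigma_1$ of item (2) into the hypothesis of item (1) and sends each solution to its opposite, so that $u_N\le u_D\le0$ transforms into $u_N\ge u_D\ge0$. Equivalently, one reruns the same chain of inequalities with every sign reversed.

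The only genuine difficulty I anticipate is sign bookkeeping. In contrast with Theorem~\ref{t-comp-Neu-Per}, where the dominant kernel is nonnegative and monotonicity runs in the naive direction, here the governing kernels are negative, so multiplication by a kernel reverses inequalities and the familiar ``larger source yields larger solution'' heuristic is inverted. The crucial structural input that makes the argument close is precisely the strict ordering $G_N<G_D\le0$ of Corollary~\ref{c-Green-ND-p}, which forces the more negative Neumann kernel to dominate; once this is in place, combining it with the source ordering in the correct sequence requires no further analytic input.
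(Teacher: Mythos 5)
Your overall route---reduce everything to the kernel ordering $G_N[a,T]<G_D[a,T]\le 0$ from Corollary \ref{c-Green-ND-p} and then track signs through the Green's function representations---is exactly what the paper intends (it states this theorem without a separate proof, as a consequence of that corollary, in parallel with the proof of Theorem \ref{t-comp-Neu-Per}). But the decisive sentence of your argument, that ``arranging these two steps in the right order sandwiches the two solutions,'' is false: no arrangement chains. Writing the two steps out, the kernel comparison gives
$$u_N(t)=\int_0^T G_N[a,T](t,s)\,\sigma_2(s)\,ds\;\le\;\int_0^T G_D[a,T](t,s)\,\sigma_2(s)\,ds,$$
while the order-reversal of $\sigma\mapsto\int_0^T G_D[a,T](t,s)\,\sigma(s)\,ds$ applied to $\sigma_2\le\sigma_1$ gives
$$\int_0^T G_D[a,T](t,s)\,\sigma_2(s)\,ds\;\ge\;\int_0^T G_D[a,T](t,s)\,\sigma_1(s)\,ds=u_D(t).$$
Both $u_N$ and $u_D$ end up bounded above by the \emph{same} intermediate quantity, which yields no relation between them; the other ordering (source comparison against $G_N$ first, then kernel comparison with $\sigma_1$) bounds both from \emph{below} by $\int_0^T G_N[a,T](t,s)\,\sigma_1(s)\,ds$, with the same defect.

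The gap cannot be closed, because the statement with this pairing of sources is actually false: take $a\equiv -1$, $T=1$ (so $G_P[\tilde a,2T]<0$), $\sigma_2\equiv 0$, $\sigma_1\equiv 1$. Then $u_N\equiv 0$, while $u_D(t)=-1+\cosh\left(t-\tfrac12\right)/\cosh\left(\tfrac12\right)<0$ on $(0,1)$, so $u_N\le u_D$ fails. What is true---and what is evidently intended, by analogy with Theorem \ref{t-comp-Neu-Per}, where the kernel that dominates in absolute value is paired with the dominating source---is the statement with the sources exchanged: $u_N$ solves \eqref{e-N} with $\sigma=\sigma_1$ and $u_D$ solves \eqref{e-D} with $\sigma=\sigma_2$. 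With that correction your two monotonicity facts chain perfectly:
$$u_N(t)=\int_0^T G_N\,\sigma_1\,ds\;\le\;\int_0^T G_N\,\sigma_2\,ds\;\le\;\int_0^T G_D\,\sigma_2\,ds=u_D(t)\;\le\;0,$$
the first inequality since $G_N\le 0$ and $\sigma_1\ge\sigma_2$, the second since $G_N\le G_D$ and $\sigma_2\ge 0$, the last since $G_D\le 0$. (Your reduction of item (2) to item (1) by linearity is fine.) In short: right ingredients, but the ``sign bookkeeping'' you yourself flagged as the only difficulty is precisely where the argument breaks, and a careful bookkeeping would have revealed that the printed hypothesis forces the wrong pairing of kernels and sources.
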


Analogously, from (\ref{e-Green-NM1-N}), (\ref{e-N-M1-N}), (\ref{e-Green-M2D-D}) and (\ref{e-M2-D-D}) we conclude that
\begin{corollary}
\label{c-Green-NM1-N-M2D-D}
If $G_N[\tilde a,2\,T]\geq 0$ on $J\times J$, then
$$G_N[a,T](t,s)\geq |G_{M_1}[a,T](t,s)|\, (=-G_{M_1}[a,T](t,s))\qquad \forall \;(t,s)\in I\times I.$$
If $G_N[\tilde a,2\,T]< 0$ on $J\times J$, then
$$G_N[a,T](t,s) < G_{M_1}[a,T](t,s) \, (\le 0)\qquad \forall \; (t,s)\in I\times I.$$
If $G_D[\tilde a,2\,T]\leq 0$ on $J\times J$, then
$$G_{M_2}[a,T](t,s) < G_{D}[a,T](t,s) \, (\le 0)\qquad \forall \; (t,s)\in I\times I.$$
\end{corollary}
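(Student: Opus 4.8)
The plan is to read off each of the three assertions directly from the four linear identities \eqref{e-Green-NM1-N}, \eqref{e-N-M1-N}, \eqref{e-Green-M2D-D} and \eqref{e-M2-D-D}, exactly as one does for Corollary \ref{c-Green-ND-p}, using the sign hypotheses to control the right-hand sides and then calling on the already-established sign dictionary for the strictness and the parenthetical statements. For the first assertion I fix $(t,s)\in I\times I$ and rewrite \eqref{e-N-M1-N} as $G_N[a,T](t,s)-G_{M_1}[a,T](t,s)=2\,G_N[\tilde a,2\,T](2\,T-t,s)$; since $G_N[\tilde a,2\,T]\ge 0$ on $J\times J$ the right-hand side is nonnegative, so $G_{M_1}[a,T](t,s)\le G_N[a,T](t,s)$. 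Symmetrically \eqref{e-Green-NM1-N} gives $G_N[a,T](t,s)+G_{M_1}[a,T](t,s)=2\,G_N[\tilde a,2\,T](t,s)\ge 0$, whence $-G_{M_1}[a,T](t,s)\le G_N[a,T](t,s)$. Combining the two inequalities yields $|G_{M_1}[a,T](t,s)|\le G_N[a,T](t,s)$.

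For the parenthetical identity $|G_{M_1}[a,T]|=-G_{M_1}[a,T]$ I would note that adding \eqref{e-Green-NM1-N} and \eqref{e-N-M1-N} already shows $G_N[a,T]\ge 0$ on $I\times I$, so $G_N[a,T]$ is of constant sign; assertion (6) of Corollary \ref{c-GP<0->GD<0} then gives $G_{M_1}[a,T]<0$ on $[0,T)\times[0,T)$, hence $G_{M_1}[a,T]\le 0$ on $I\times I$ by continuity. The second assertion is the strict counterpart: under $G_N[\tilde a,2\,T]<0$ on all of $J\times J$ the right-hand side of \eqref{e-N-M1-N} is strictly negative at every $(t,s)$, so $G_N[a,T](t,s)<G_{M_1}[a,T](t,s)$ for every $(t,s)\in I\times I$, while $G_{M_1}[a,T]\le 0$ again follows from Corollary \ref{c-GP<0->GD<0}.

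The third assertion is handled through \eqref{e-M2-D-D}, which reads $G_{M_2}[a,T](t,s)-G_D[a,T](t,s)=2\,G_D[\tilde a,2\,T](2\,T-t,s)$. Here the hypothesis is only $G_D[\tilde a,2\,T]\le 0$, so the right-hand side is a priori merely nonpositive; to recover strictness I would invoke Lemma \ref{l-coppel}, which upgrades $G_D[\tilde a,2\,T]\le 0$ to $G_D[\tilde a,2\,T]<0$ on the open square $(0,2\,T)\times(0,2\,T)$. For $(t,s)$ with $t\in(0,T]$ and $s\in(0,T]$ the reflected argument $(2\,T-t,s)$ is interior to $J\times J$, so the right-hand side is strictly negative and $G_{M_2}[a,T](t,s)<G_D[a,T](t,s)$; that $G_D[a,T]\le 0$ follows once more from Corollary \ref{c-GP<0->GD<0}.

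The main obstacle, I expect, is not the algebra — the three inequalities drop out of the identities — but justifying the strictness and the sign annotations cleanly at the boundary. In particular one must watch that the reflected argument $2\,T-t$ hits the boundary of $J\times J$ precisely when $t=0$ (and likewise $s=0$), where the Dirichlet Green's function vanishes, so the strict inequality in the third assertion genuinely holds only on $(0,T]\times(0,T]$ rather than on all of $I\times I$; I would flag this domain restriction explicitly. Everything else reduces to bookkeeping with the sign results of Corollary \ref{c-GP<0->GD<0} and Lemma \ref{l-coppel}.
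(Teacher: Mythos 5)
Your proposal is correct and follows essentially the same route as the paper, which states this corollary as an immediate consequence of the identities \eqref{e-Green-NM1-N}, \eqref{e-N-M1-N}, \eqref{e-Green-M2D-D} and \eqref{e-M2-D-D}, with the parenthetical sign annotations supplied by Corollary \ref{c-GP<0->GD<0} and Lemma \ref{l-coppel}, exactly as in your write-up. Your caveat that the strict inequality in the third assertion can only hold on $(0,T]\times(0,T]$ --- since $G_{M_2}[a,T]$ and $G_{D}[a,T]$ both vanish when $t=0$ or $s=0$ --- is a correct refinement of the statement as printed.
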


As consequence, we deduce the following comparison principles

\begin{theorem}
\label{t-comp-Neu-M1}
Suppose that $G_N[\tilde a,2\,T]\geq 0$ on $J\times J$. Let $u_N$ be the unique solution of problem \eqref{e-N} for $\sigma = \sigma_1$ and $u_{M_1}$ the unique solution of \eqref{e-M-1} for $\sigma = \sigma_2$. 

Suppose that $|\sigma_2(t)| \le \sigma_1(t)$ a.\,e. $t \in I$, then $|u_{M_1}(t)| \le u_N(t)$ for all $t \in I$.
\end{theorem}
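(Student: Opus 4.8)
The plan is to mimic verbatim the argument used in the proof of Theorem \ref{t-comp-Neu-Per}, simply replacing the Neumann--Dirichlet comparison by the Neumann--Mixed one supplied by Corollary \ref{c-Green-NM1-N-M2D-D}. First I would record the Green's function representations of the two solutions, which are available because both problems \eqref{e-N} and \eqref{e-M-1} are assumed uniquely solvable: $u_N(t)=\int_0^T G_N[a,T](t,s)\,\sigma_1(s)\,ds$ and $u_{M_1}(t)=\int_0^T G_{M_1}[a,T](t,s)\,\sigma_2(s)\,ds$ for every $t\in I$.

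Next I would estimate $|u_{M_1}(t)|$ by pushing the absolute value under the integral sign, obtaining $|u_{M_1}(t)|\le \int_0^T |G_{M_1}[a,T](t,s)|\,|\sigma_2(s)|\,ds$. This is where the hypothesis enters: since $G_N[\tilde a,2\,T]\ge 0$ on $J\times J$, the first assertion of Corollary \ref{c-Green-NM1-N-M2D-D} gives the pointwise bound $|G_{M_1}[a,T](t,s)|\le G_N[a,T](t,s)$ for all $(t,s)\in I\times I$. In particular $G_N[a,T]\ge 0$ on $I\times I$, so all the integrands below have a controlled sign.

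Finally I would chain the inequalities. Using $|G_{M_1}[a,T]|\le G_N[a,T]$ together with $|\sigma_2|\le \sigma_1$ a.\,e. (which in particular forces $\sigma_1\ge 0$ a.\,e.) and the nonnegativity of $G_N[a,T]$, I get
$$|u_{M_1}(t)|\le \int_0^T |G_{M_1}[a,T](t,s)|\,|\sigma_2(s)|\,ds\le \int_0^T G_N[a,T](t,s)\,|\sigma_2(s)|\,ds\le \int_0^T G_N[a,T](t,s)\,\sigma_1(s)\,ds=u_N(t),$$
which is exactly the claimed inequality $|u_{M_1}(t)|\le u_N(t)$ for all $t\in I$.

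I do not expect any genuine obstacle: the statement is a direct consequence of the pointwise comparison of Green's functions already established in Corollary \ref{c-Green-NM1-N-M2D-D}, and the analytic content is confined to a triangle inequality and monotonicity of the integral. The only point deserving a moment of care is keeping track of signs when dropping the absolute values, namely verifying that $G_N[a,T]$ is itself nonnegative (guaranteed by the corollary, since $G_N[a,T]\ge |G_{M_1}[a,T]|\ge 0$) and that $\sigma_1\ge 0$ a.\,e.\ (which is implied by $|\sigma_2|\le\sigma_1$); both are immediate.
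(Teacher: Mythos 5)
Your proposal is correct and coincides with the paper's own (implicit) argument: the paper deduces Theorem \ref{t-comp-Neu-M1} from Corollary \ref{c-Green-NM1-N-M2D-D} exactly as Theorem \ref{t-comp-Neu-Per} is deduced from Corollary \ref{c-Green-ND-p}, i.e.\ by the same chain of inequalities you wrote, with $G_D$ replaced by $G_{M_1}$. Your additional sign checks ($G_N[a,T]\ge |G_{M_1}[a,T]|\ge 0$ and $\sigma_1\ge 0$ a.\,e.) are accurate and complete the argument.
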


\begin{theorem}
\label{t-comp-Neu-M1-2}
Suppose that $G_N[\tilde a,2\,T]< 0$ on $J\times J$. Let $u_{M_1}$ be the unique solution of problem \eqref{e-M-1} for $\sigma = \sigma_1$ and $u_N$ the unique solution of \eqref{e-N} for $\sigma = \sigma_2$. 
\begin{enumerate}
\item If $0\le \sigma_2(t) \le \sigma_1(t)$ a.\,e. $t \in I$, then $u_N(t) \le u_{M_1}(t)\le 0$ for all $t \in I$.

\item If $0\ge \sigma_2(t) \ge \sigma_1(t)$ a.\,e. $t \in I$, then $u_N(t) \ge u_{M_1}(t)\ge 0$ for all $t \in I$.
\end{enumerate}
\end{theorem}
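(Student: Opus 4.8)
The plan is to derive everything from the pointwise kernel ordering already recorded in Corollary~\ref{c-Green-NM1-N-M2D-D}. Under the standing hypothesis $G_N[\tilde a,2\,T]<0$ on $J\times J$, that corollary (equivalently, the identities~\eqref{e-Green-NM1-N} and~\eqref{e-N-M1-N}) yields
$$G_N[a,T](t,s)<G_{M_1}[a,T](t,s)\le 0\qquad\text{for all }(t,s)\in I\times I.$$
I would then represent both solutions through their Green's functions,
$$u_{M_1}(t)=\int_0^T G_{M_1}[a,T](t,s)\,\sigma_1(s)\,ds,\qquad u_N(t)=\int_0^T G_N[a,T](t,s)\,\sigma_2(s)\,ds,$$
and argue entirely at the level of these integrals, in the spirit of the proof of Theorem~\ref{t-comp-Neu-Per}.

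First I would settle the one-sided bounds. In part~(1) the datum of the $M_1$ problem is nonnegative and $G_{M_1}[a,T]\le 0$, so $u_{M_1}\le 0$ on $I$; in part~(2) the corresponding datum is nonpositive, giving $u_{M_1}\ge 0$. Both are immediate from the sign of the relevant kernel.

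For the comparison between $u_N$ and $u_{M_1}$ I would use a two-step sandwich through the intermediate function $w(t)=\int_0^T G_N[a,T](t,s)\,\sigma(s)\,ds$ with the suitable datum $\sigma$: one step invokes the kernel comparison $G_N[a,T]\le G_{M_1}[a,T]$ tested against a nonnegative datum, and the other invokes the datum comparison $\sigma_2\le\sigma_1$ tested against the nonpositive kernel, the two combining into $u_N\le w\le u_{M_1}$. Part~(2) then follows from part~(1) purely by linearity of the solution map, applying the result to $-\sigma_1,-\sigma_2$, which satisfy the hypotheses of part~(1).

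The delicate point, and what I expect to be the main obstacle, is exactly the bookkeeping in that sandwich: the kernel comparison $G_N\le G_{M_1}\le 0$ and the datum comparison $0\le\sigma_2\le\sigma_1$ are monotone in opposite senses, since a more negative kernel and a larger nonnegative datum both push the solution downward. Consequently the two inequalities compose only when the more negative kernel $G_N$ is matched with the larger datum and $G_{M_1}$ with the smaller one, so that $w$ genuinely lies between $u_N$ and $u_{M_1}$. Verifying that this matching is consistent with the assignment of data to the two problems—and that the intermediate $w$ is correctly trapped—is the crux of the argument; the remaining manipulations are the routine integral estimates already illustrated in Theorem~\ref{t-comp-Neu-Per}.
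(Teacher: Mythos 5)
You have put your finger on the right issue, but then you defer it as ``bookkeeping,'' and it is precisely the step that fails. Your sandwich needs the more negative kernel $G_N[a,T]$ to be paired with the \emph{larger} datum, whereas the statement pairs it the other way around: $u_{M_1}$ (the less negative kernel $G_{M_1}[a,T]$) carries the larger datum $\sigma_1$, and $u_N$ carries the smaller datum $\sigma_2$. With that assignment neither choice of intermediate function closes the chain. Writing $G_N:=G_N[a,T]$, $G_{M_1}:=G_{M_1}[a,T]$, take first $w(t)=\int_0^T G_{M_1}(t,s)\,\sigma_2(s)\,ds$: the kernel comparison $G_N\le G_{M_1}$ with $\sigma_2\ge 0$ gives $u_N\le w$, but the datum comparison $\sigma_2\le\sigma_1$ against the nonpositive kernel $G_{M_1}$ gives $u_{M_1}\le w$ as well, so both solutions lie \emph{below} $w$ and no relation between them follows. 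Taking instead $w(t)=\int_0^T G_N(t,s)\,\sigma_1(s)\,ds$ yields $w\le u_N$ and $w\le u_{M_1}$, with the same failure. So the trap you hoped to verify cannot be set up for the data as assigned in the statement.

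In fact no argument can close it, because the statement as printed is false: take $a\equiv -1$, $T=1$, $\sigma_1\equiv 1$, $\sigma_2\equiv 0$. The hypothesis holds, since $G_N[\tilde a,2](t,s)=-\cosh(\min(t,s))\,\cosh(2-\max(t,s))/\sinh 2<0$ on $J\times J$; but $u_N\equiv 0$ while $u_{M_1}(t)=\cosh t/\cosh 1-1<0$ on $[0,1)$, so $u_N\le u_{M_1}$ fails. The roles of the two data have been interchanged in the statement (compare Theorem \ref{t-comp-Neu-M1}, where it is $u_N$ that carries $\sigma_1$); the paper prints no separate proof, presenting the result as an immediate consequence of Corollary \ref{c-Green-NM1-N-M2D-D}, which obscures this misprint. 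Once the roles are swapped --- $u_N$ solves \eqref{e-N} with $\sigma_1$ and $u_{M_1}$ solves \eqref{e-M-1} with $\sigma_2$ --- your approach works and collapses to a one-line chain:
$$u_N(t)=\int_0^T G_N(t,s)\,\sigma_1(s)\,ds\le\int_0^T G_N(t,s)\,\sigma_2(s)\,ds\le\int_0^T G_{M_1}(t,s)\,\sigma_2(s)\,ds=u_{M_1}(t)\le 0,$$
the first inequality using $G_N\le 0$ and $\sigma_2\le\sigma_1$, the second using $G_N\le G_{M_1}$ and $\sigma_2\ge 0$, the last using $G_{M_1}\le 0$; part (2) then follows by linearity, as you say. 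This corrected chain is exactly the paper's intended argument, the negative-kernel analogue of Theorem \ref{t-comp-Neu-Per}; your proposal had all the ingredients but stopped exactly where the printed statement and the method part ways.
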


\begin{theorem}
\label{t-comp-D-M2}
Suppose that $G_D[\tilde a,2\,T]\le 0$ on $J\times J$. Let $u_D$ be the unique solution of problem \eqref{e-D} for $\sigma = \sigma_1$ and $u_{M_2}$ the unique solution of problem \eqref{e-M-2} for $\sigma = \sigma_2$. 
\begin{enumerate}
\item If $0\le \sigma_2(t) \le \sigma_1(t)$ a.\,e. $t \in I$, then $u_{M_2}(t) \le u_D(t)\le 0$ for all $t \in I$.

\item If $0\ge \sigma_2(t) \ge \sigma_1(t)$ a.\,e. $t \in I$, then $u_{M_2}(t) \ge u_D(t) \ge 0$ for all $t \in I$.
\end{enumerate}
\end{theorem}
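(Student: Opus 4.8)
The plan is to argue exactly as in the proof of Theorem~\ref{t-comp-Neu-Per}, replacing the pair $(G_N,G_D)$ by the pair $(G_{M_2},G_D)$ and using the pointwise ordering of the kernels furnished by Corollary~\ref{c-Green-NM1-N-M2D-D}. Both solutions are given explicitly by their Green's functions,
\[
u_D(t)=\int_0^T G_D[a,T](t,s)\,\sigma_1(s)\,ds,\qquad
u_{M_2}(t)=\int_0^T G_{M_2}[a,T](t,s)\,\sigma_2(s)\,ds,
\]
so the whole statement is a comparison of these two integrals. The hypothesis $G_D[\tilde a,2\,T]\le 0$ on $J\times J$ is precisely what is required to invoke the third inequality of Corollary~\ref{c-Green-NM1-N-M2D-D}, which yields the pointwise bound
\[
G_{M_2}[a,T](t,s)<G_D[a,T](t,s)\le 0\qquad\forall\,(t,s)\in I\times I.
\]

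First I would settle the one-sided conclusions. In case (1) the data $\sigma_1,\sigma_2$ are non-negative and both kernels are non-positive, so each integrand above is non-positive and hence $u_D(t)\le 0$ and $u_{M_2}(t)\le 0$ for every $t\in I$; in case (2) the data are non-positive, and the same computation gives $u_D(t)\ge 0$ and $u_{M_2}(t)\ge 0$. It then remains to order the two solutions. Since the kernels are non-positive, the cleanest route is through absolute values, exactly the device used for $|u_D|$ in the proof of Theorem~\ref{t-comp-Neu-Per}: one chains the pointwise inequality $-G_{M_2}[a,T]\ge -G_D[a,T]\ge 0$ with the ordering of the data $\sigma_1,\sigma_2$ to compare $|u_D|$ and $|u_{M_2}|$, and then, both solutions being of one sign, reads off $u_{M_2}(t)\le u_D(t)\le 0$ in case (1).

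Case (2) then follows from case (1) by the substitution $\sigma_i\mapsto-\sigma_i$ and linearity of the solution operator, which reverses every inequality and gives $u_{M_2}(t)\ge u_D(t)\ge 0$. The step I expect to be the main obstacle is the sign bookkeeping in this central comparison: because $G_D$ and $G_{M_2}$ are non-positive, multiplying the kernel ordering by the data reverses inequalities, so the monotonicity in the kernel and the monotonicity in the data act in opposite directions, and one must be careful to pass through $|\cdot|$ and to pair the more negative kernel with the correct datum, rather than comparing the signed integrals term by term.
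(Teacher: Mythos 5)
Your one-sided conclusions and the reduction of case (2) to case (1) are fine, but the central step --- ordering $u_{M_2}$ against $u_D$ --- is a genuine gap, and it is not one that more careful bookkeeping can close. In Theorem \ref{t-comp-Neu-Per} the chaining succeeds because the dominant kernel ($G_N\ge|G_D|\ge 0$) is paired with the dominant datum $\sigma_1$, so both inequalities
\[
\int_0^T |G_D|\,|\sigma_2|\,ds\ \le\ \int_0^T G_N\,|\sigma_2|\,ds\ \le\ \int_0^T G_N\,\sigma_1\,ds
\]
run in the same direction. In the present statement the dominant kernel is $-G_{M_2}\,\bigl(\ge -G_D\ge 0\bigr)$, but it is paired with the \emph{smaller} datum $\sigma_2$: the kernel inequality gives $\int(-G_{M_2})\,\sigma_2\ge\int(-G_D)\,\sigma_2$, while the data inequality gives $\int(-G_D)\,\sigma_2\le\int(-G_D)\,\sigma_1$, and these two cannot be chained into any comparison between $-u_{M_2}=\int(-G_{M_2})\,\sigma_2$ and $-u_D=\int(-G_D)\,\sigma_1$. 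You identified exactly this tension yourself, but then asserted that the conclusion can still be ``read off'' after passing through absolute values; it cannot.

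No bookkeeping can repair this, because with this pairing of the data the asserted inequality is false. Take $\sigma_2\equiv 0$ and $\sigma_1\equiv 1$ (so $0\le\sigma_2\le\sigma_1$): then $u_{M_2}\equiv 0$, and case (1) would force $0\le u_D(t)\le 0$, i.e.\ $u_D\equiv 0$ on $I$, which is impossible since $u_D''+a\,u_D=1$. (Concretely, for $a\equiv -1$, $T=1$ the hypothesis $G_D[\tilde a,2\,T]\le 0$ holds and $u_D(1/2)\approx -0.113<0$.) What Corollary \ref{c-Green-NM1-N-M2D-D} actually yields is the statement with the data subscripts interchanged, i.e.\ with the larger datum attached to the kernel of larger absolute value: if $0\le\sigma_1\le\sigma_2$, then
\begin{align*}
-u_{M_2}(t)&=\int_0^T\bigl(-G_{M_2}[a,T](t,s)\bigr)\,\sigma_2(s)\,ds\ \ge\ \int_0^T\bigl(-G_{M_2}[a,T](t,s)\bigr)\,\sigma_1(s)\,ds\\
&\ge\ \int_0^T\bigl(-G_{D}[a,T](t,s)\bigr)\,\sigma_1(s)\,ds\ =\ -u_D(t)\ \ge\ 0,
\end{align*}
so that $u_{M_2}\le u_D\le 0$ on $I$ (and symmetrically for nonpositive data). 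The same interchange is needed in Theorems \ref{t-comp-Neu-Dir-2} and \ref{t-comp-Neu-M1-2}, which share this structure, so this is a flaw in the printed statement rather than something your argument could have fixed; with the corrected hypothesis your plan (which is also the paper's implicit one) goes through verbatim, and in fact no absolute values are needed --- the direct chain above suffices.
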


Rewriting Corollaries \ref{c-Green-ND-p} and \ref{c-Green-NM1-N-M2D-D} in terms of eigenvalues, we have the following

\begin{corollary}
\label{c-Green-ND-p-2}
If $(\lambda_N(a,T)=)\,\lambda_P(\tilde a,2\,T)<0 \le \lambda_A(\tilde a,2\,T)$, then
$$G_N[a,T](t,s)\geq -G_D[a,T](t,s)\ge 0 \qquad \forall \;(t,s)\in I\times I.$$
If $(\lambda_N(a,T)=\lambda_N(\tilde{a},2\,T)\,=\lambda_P(\tilde{\tilde{a}},4\,T)=)\,\lambda_P(\tilde a,2\,T)>0$, then
$$G_N[a,T](t,s) < G_D[a,T](t,s)\le 0 \qquad \forall \; (t,s)\in I\times I$$
and
$$G_N[a,T](t,s) < G_{M_1}[a,T](t,s)\le 0 \qquad \forall \; (t,s)\in I\times I.$$
If $(\lambda_N(a,T)=\lambda_N(\tilde{a},2\,T)=)\,\lambda_P(\tilde{\tilde{a}},4\,T)<0 \le \lambda_A(\tilde{\tilde{a}},4\,T)$, then
$$G_N[a,T](t,s)\geq -G_{M_1}[a,T](t,s)\ge 0\qquad \forall \;(t,s)\in I\times I.$$
If $(\lambda_D(\tilde{a},2\,T)=)\,\lambda_{M_2}(a,T)>0$, then
$$G_{M_2}[a,T](t,s) < G_{D}[a,T](t,s)\le 0\qquad \forall \; (t,s)\in I\times I.$$
\end{corollary}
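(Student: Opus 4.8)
The plan is to reduce every statement to the sign–comparison inequalities already proved in Corollaries \ref{c-Green-ND-p} and \ref{c-Green-NM1-N-M2D-D}, by translating each eigenvalue hypothesis into the constant sign of the appropriate auxiliary Green's function on $J\times J$ (or on $[0,4\,T]\times[0,4\,T]$). The translation dictionary is supplied by Lemma \ref{l-zhang-eigen}, Lemma \ref{l-Green-neg} and Lemma \ref{l-coppel}, while the eigenvalue identities appearing in the parenthetical annotations are exactly the ones collected in Theorem \ref{t-lambdaP=lambdaN}, read both at scale $T$ and, after the substitution $a\mapsto\tilde a$, $T\mapsto 2\,T$, at scale $2\,T$.

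For the first assertion I would read the hypothesis $\lambda_P(\tilde a,2\,T)<0\le\lambda_A(\tilde a,2\,T)$ through the second statement of Lemma \ref{l-zhang-eigen} applied to the potential $\tilde a$ on $[0,2\,T]$, which gives $G_P[\tilde a,2\,T]\ge 0$ on $J\times J$; feeding this into the first case of Corollary \ref{c-Green-ND-p} yields $G_N[a,T]\ge -G_D[a,T]$, the right-hand side being nonnegative since $|G_D[a,T]|=-G_D[a,T]$. For the negative Neumann--Dirichlet statement the hypothesis $\lambda_P(\tilde a,2\,T)>0$ gives $G_P[\tilde a,2\,T]\le 0$ by the first statement of Lemma \ref{l-zhang-eigen}, upgraded to the strict inequality $G_P[\tilde a,2\,T]<0$ by Lemma \ref{l-Green-neg}; the second case of Corollary \ref{c-Green-ND-p} then produces $G_N[a,T]<G_D[a,T]\le 0$. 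The identities $\lambda_N(a,T)=\lambda_P(\tilde a,2\,T)$ and $\lambda_{M_2}(a,T)=\lambda_D(\tilde a,2\,T)$ invoked in the annotations are assertions 1 and 8 of Theorem \ref{t-lambdaP=lambdaN}.

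The delicate point, and the step I expect to cost the most care, is the pair of mixed statements, because Corollary \ref{c-Green-NM1-N-M2D-D} is phrased in terms of the sign of $G_N[\tilde a,2\,T]$ (respectively $G_D[\tilde a,2\,T]$) on $J\times J$, whereas the hypotheses here are stated for the $4\,T$-periodic problem, so a scale bridge is needed. To obtain $G_N[a,T]<G_{M_1}[a,T]\le 0$ from $\lambda_P(\tilde a,2\,T)>0$ I would invoke item (1) of Corollary \ref{c-GP<0->GD<0}, which makes $G_P[\tilde a,2\,T]<0$ on $J\times J$ equivalent to $G_N[\tilde a,2\,T]<0$ on $J\times J$, and then apply the second case of Corollary \ref{c-Green-NM1-N-M2D-D}. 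For the nonnegative mixed statement, the hypothesis $\lambda_P(\tilde{\tilde a},4\,T)<0\le\lambda_A(\tilde{\tilde a},4\,T)$ gives $G_P[\tilde{\tilde a},4\,T]\ge 0$ on $[0,4\,T]\times[0,4\,T]$ by the second statement of Lemma \ref{l-zhang-eigen}; writing the Neumann–periodic identity \eqref{e-Green-NP-2} at scale $2\,T$, namely $G_N[\tilde a,2\,T](t,s)=G_P[\tilde{\tilde a},4\,T](t,s)+G_P[\tilde{\tilde a},4\,T](4\,T-t,s)$, shows $G_N[\tilde a,2\,T]\ge 0$ on $J\times J$, and the first case of Corollary \ref{c-Green-NM1-N-M2D-D} gives $G_N[a,T]\ge -G_{M_1}[a,T]\ge 0$. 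Finally, the last statement follows from $\lambda_{M_2}(a,T)=\lambda_D(\tilde a,2\,T)>0$, which by Lemma \ref{l-coppel} (with $\lambda=0$, potential $\tilde a$, period $2\,T$) forces $G_D[\tilde a,2\,T]<0$ on $(0,2\,T)\times(0,2\,T)$, hence $\le 0$ on $J\times J$, so the third case of Corollary \ref{c-Green-NM1-N-M2D-D} delivers $G_{M_2}[a,T]<G_D[a,T]\le 0$. The only genuine bookkeeping is to keep the potential/period substitutions consistent so that each translation lemma is applied on the correct interval.
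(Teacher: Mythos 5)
Your proposal is correct and follows exactly the route the paper intends: the paper introduces this corollary with the phrase ``Rewriting Corollaries \ref{c-Green-ND-p} and \ref{c-Green-NM1-N-M2D-D} in terms of eigenvalues,'' and your argument is precisely that translation, using Lemma \ref{l-zhang-eigen}, Lemma \ref{l-Green-neg}, Lemma \ref{l-coppel}, Theorem \ref{t-lambdaP=lambdaN} and Corollary \ref{c-GP<0->GD<0} to convert each spectral hypothesis into the sign of the appropriate auxiliary Green's function before invoking the two comparison corollaries. The scale bridge you flag for the mixed statements (applying \eqref{e-Green-NP-2} with $a\mapsto\tilde a$, $T\mapsto 2\,T$) is the right way to handle the $4\,T$-periodic hypotheses and matches the paper's framework.
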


We also deduce the following result

\begin{corollary}
\label{c-Green-ND-p-3}
If $G_N[a,T]\geq 0$ on $I\times I$, then
\begin{enumerate}
\item $G_N[a,T](t,s)\leq 2\,G_P[\tilde a,2\,T](2\,T - t,s)$ on $I\times I$.
\item $0\ge G_D[a,T](t,s)\geq -2\,G_P[\tilde a,2\,T](2\,T - t,s)$ on $I\times I$.
\item $G_N[a,T](t,s)\leq 2\,G_N[\tilde a,2\,T](2\,T - t,s)$ on $I\times I$.
\item $0\ge G_{M_1}[a,T](t,s)\geq -2\,G_N[\tilde a,2\,T](2\,T - t,s)$ on $I\times I$.
\end{enumerate}
In particular, $G_P[\tilde a,2\,T](2\,T - t,s) \ge 0$ and $G_N[\tilde a,2\,T](2\,T - t,s) \ge 0$ on $I\times I$.
\end{corollary}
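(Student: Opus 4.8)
The plan is to derive all four inequalities from the two linear identities \eqref{e-N-D-P} and \eqref{e-N-M1-N}, which express the differences $G_N-G_D$ and $G_N-G_{M_1}$ purely in terms of the shifted functions $G_P[\tilde a,2\,T](2\,T-t,s)$ and $G_N[\tilde a,2\,T](2\,T-t,s)$, combined with sign information taken from Corollary \ref{c-GP<0->GD<0}. The crucial preliminary step is to record that, under the standing hypothesis $G_N[a,T]\ge 0$ on $I\times I$, both $G_D[a,T]$ and $G_{M_1}[a,T]$ are nonpositive on the \emph{closed} square $I\times I$.

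To obtain this I would invoke item (6) of Corollary \ref{c-GP<0->GD<0}: since $G_N[a,T]$ has constant sign, it yields $G_D[a,T]<0$ on $(0,T)\times(0,T)$ and $G_{M_1}[a,T]<0$ on $[0,T)\times[0,T)$. The values on the remaining edges are pinned down by the boundary conditions and the symmetry of the self-adjoint operators: the Dirichlet conditions force $G_D[a,T]$ to vanish whenever $t\in\{0,T\}$ or $s\in\{0,T\}$, while the $M_1$ conditions together with symmetry force $G_{M_1}[a,T]$ to vanish whenever $t=T$ or $s=T$. Combining strict interior negativity with these vanishing boundary values gives $G_D[a,T]\le 0$ and $G_{M_1}[a,T]\le 0$ throughout $I\times I$.

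With this in hand each assertion is a one-line algebraic consequence. For item (1) I would rewrite \eqref{e-N-D-P} as $2\,G_P[\tilde a,2\,T](2\,T-t,s)=G_N[a,T]-G_D[a,T]$; since $-G_D[a,T]\ge 0$ this gives $2\,G_P[\tilde a,2\,T](2\,T-t,s)\ge G_N[a,T]$, which is exactly (1) and simultaneously shows $G_P[\tilde a,2\,T](2\,T-t,s)\ge\tfrac12 G_N[a,T]\ge 0$. For the lower bound in (2), add $G_D[a,T]$ to both sides of the same identity to obtain $2\,G_P[\tilde a,2\,T](2\,T-t,s)+G_D[a,T]=G_N[a,T]\ge 0$, i.e. $G_D[a,T]\ge -2\,G_P[\tilde a,2\,T](2\,T-t,s)$; the upper bound $G_D[a,T]\le 0$ is the preliminary fact. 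Items (3) and (4) are proved identically, replacing \eqref{e-N-D-P} by \eqref{e-N-M1-N} and $G_D$ by $G_{M_1}$, and they likewise produce $G_N[\tilde a,2\,T](2\,T-t,s)\ge 0$.

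The computations are routine once the sign data are available, so the only genuine point requiring care---the mild ``obstacle''---is the passage from the strict inequalities on the interior provided by Corollary \ref{c-GP<0->GD<0}(6) to nonstrict inequalities on all of $I\times I$. This hinges on correctly reading off the vanishing of $G_D$ and $G_{M_1}$ on the relevant edges from the boundary conditions, and in particular on using the symmetry of $G_{M_1}$ (valid because the $M_1$ conditions $u'(0)=u(T)=0$ are separated and self-adjoint) to cover the edge $t=T$ that Corollary \ref{c-GP<0->GD<0}(6) does not reach directly.
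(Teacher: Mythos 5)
Your proposal is correct and follows essentially the same route as the paper: the paper likewise deduces all four inequalities from the identities \eqref{e-N-D-P} and \eqref{e-N-M1-N} (together with their companions \eqref{e-Green-DN-P}, \eqref{e-Green-NM1-N}) combined with the fact that $G_N[a,T]\ge 0$ on $I\times I$ forces $G_D[a,T]\le 0$ and $G_{M_1}[a,T]\le 0$ on $I\times I$. Your only addition is to spell out the passage from the strict interior negativity in Corollary \ref{c-GP<0->GD<0}(6) to nonpositivity on the closed square via the boundary conditions and symmetry of $G_D$ and $G_{M_1}$, a detail the paper leaves implicit.
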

\begin{proof}
The inequalities are deduced from expressions (\ref{e-Green-DN-P}), (\ref{e-N-D-P}), (\ref{e-Green-NM1-N}) and (\ref{e-N-M1-N}) by taking into account that if $G_N[a,T]\ge 0$ on $I\times I$ then $G_D[a,T]\le 0$ and $G_{M_1}[a,T]\le 0$ on $I\times I$.
\end{proof}

\subsection{Global order of eigenvalues}
It can also be proved that there exists a certain order for the eigenvalues related to (\ref{e-N}), (\ref{e-D}), (\ref{e-M-1}) and (\ref{e-M-2}). To prove that, the following remark will be taken into account.

\begin{remark}
All the eigenvalues of Neumann, Dirichlet and Mixed problems are simple and the eigenfunction associated with the $k$-th eigenvalue (with $k=0,1,\dots$) takes the value zero exactly $k$ times on the open interval $(0,T)$. This general result is proved in \cite[Chapter 7]{weinberger} for Dirichlet and Mixed problems. For Neumann problem the result can be obtained from the previous ones by means of classical oscillation theory of Sturm-Liouville (\cite{simmons}).
\end{remark}

Consider then the following facts:
\begin{itemize}
\item[(i)] Let $\lambda_k^{N,T},\, \lambda_{k+1}^{N,T}\in\Lambda_N[a,T]$ be two consecutive eigenvalues of problem (\ref{e-N}) and let $u_k^{N,T}$ and $u_{k+1}^{N,T}$ be their associated eigenfunctions, with $k$ and $k+1$ zeros on the interval $[0,T]$, respectively.

If we consider the even extensions of $u_k^{N,T}$ and $u_{k+1}^{N,T}$ to the interval $[0,2\,T]$, it is clear that they have $2k$ and $2k+2$ zeros on $[0,2\,T]$, respectively, so there must exist an eigenvalue $\lambda\in\Lambda_N[\tilde{a},2\,T]$, $\lambda_k^{N,T}<\lambda<\lambda_{k+1}^{N,T}$, such that its associated eigenfunction has exactly $2k+1$ zeros on the interval $[0,2\,T]$. From the decomposition of the Neumann spectrum showed in Subsection \ref{s-mixed}, we have that, necessarily, $\lambda\in \Lambda_{M_1}[a,T]$.  

As we know that $\lambda_N(\tilde{a},2\,T)=\lambda_N(a,T)\equiv \lambda_0^{N,T}$ we conclude that
$$\dots < \lambda_k^{N,T} < \lambda_k^{M_1,T} < \lambda_{k+1}^{N,T} < \lambda_{k+1}^{M_1,T} < \dots$$

\item[(ii)] Analogously, we can easily see that $\Lambda_{M_2}[a,T]$ corresponds with eigenvalues of $\Lambda_D[\tilde{a},2\,T]$ whose eigenfunctions have an even number of zeros on $(0,2\,T)$ and $\Lambda_D[a,T]$ corresponds with eigenvalues of $\Lambda_D[\tilde{a},2\,T]$ whose eigenfunctions have an odd number of zeros on $(0,2\,T)$. Taking into account the fact that  $\lambda_D(\tilde{a},2\,T)=\lambda_{M_2}(a,T)\equiv \lambda_0^{M_2,T}$ we conclude that
$$\dots < \lambda_k^{M_2,T} < \lambda_k^{D,T} < \lambda_{k+1}^{M_2,T} < \lambda_{k+1}^{D,T} < \dots$$

\item[(iii)] Oscillation Theorem guarantees that the eigenvalues of periodic and anti-periodic problems related to the same interval (which we will denote as $\lambda_n$ and $\lambda'_n$, respectively) always appear in the following order
$$\lambda_0<\lambda'_1\le \lambda'_2 < \lambda_1 \le \lambda_2 <\lambda'_3 \le \lambda'_4 < \lambda_3 \le \lambda_4 \dots$$
\end{itemize}

Consequently, if we consider item $(iii)$ for problems (\ref{e-P-2T}) and (\ref{e-A-2T}) and we take into account the inequalities obtained in items $(i)$ and $(ii)$ we can affirm that
\begin{itemize}
\item In each pair $\{\lambda_{2k-1}, \ \lambda_{2k}\}$ of two consecutive eigenvalues of problem (\ref{e-P-2T}), one of them belongs to $\Lambda_{N}(a,T)$ and the other one belongs to $\Lambda_{D}(a,T)$.
\item In each pair $\{\lambda'_{2k-1}, \ \lambda'_{2k}\}$ of two consecutive eigenvalues of problem (\ref{e-A-2T}), one of them belongs to $\Lambda_{M_1}(a,T)$ and the other one belongs to $\Lambda_{M_2}(a,T)$.
\end{itemize}

The previous reasoning lets us conclude that the eigenvalues of problem (\ref{e-P-4T}) always appear in the following order:
$$\lambda_0^{N,T}< \left\{\lambda_0^{M_1,T}, \ \lambda_0^{M_2,T}\right\} < \left\{\lambda_0^{D,T}, \ \lambda_1^{N,T}\right\} <  \left\{\lambda_1^{M_1,T}, \ \lambda_1^{M_2,T}\right\} < \left\{\lambda_1^{D,T}, \ \lambda_2^{N,T}\right\}<\dots$$

As an immediate consequence we deduce

\begin{corollary}
The following properties hold for any $a\in L^1(I)$.
\begin{enumerate}
\item $\lambda_k^{N,T} < \lambda_k^{M_2,T} < \lambda_{k+1}^{N,T} < \lambda_{k+1}^{M_2,T}, \quad k=0,1,\dots$
\item $\lambda_k^{M_1,T} < \lambda_k^{D,T} < \lambda_{k+1}^{M_1,T} < \lambda_{k+1}^{D,T}, \quad k=0,1,\dots$
\end{enumerate}
\end{corollary}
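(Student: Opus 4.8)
The plan is to read both chains of inequalities directly off the global ordering established immediately above the corollary, namely
$$\lambda_0^{N,T}< \{\lambda_0^{M_1,T}, \lambda_0^{M_2,T}\} < \{\lambda_0^{D,T}, \lambda_1^{N,T}\} < \{\lambda_1^{M_1,T}, \lambda_1^{M_2,T}\} < \{\lambda_1^{D,T}, \lambda_2^{N,T}\}<\dots$$
Here each brace collects two eigenvalues that are simultaneously strictly larger than every eigenvalue to its left and strictly smaller than every eigenvalue to its right, while the relative order of the two entries inside a single brace is left undetermined. The entire content of the corollary is that the four eigenvalues appearing in each displayed inequality always fall into four distinct consecutive braces, so that no comparison inside a brace is ever required and transitivity of the strict order between consecutive braces closes the argument.

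Concretely, first I would record the position of each family in the chain. Reading off the successive braces, $\lambda_k^{N,T}$ occupies the brace in position $2k+1$ (counting $\lambda_0^{N,T}$ as position $1$), both $\lambda_k^{M_1,T}$ and $\lambda_k^{M_2,T}$ occupy position $2k+2$, and $\lambda_k^{D,T}$ occupies position $2k+3$. With this bookkeeping in hand, the first inequality of the corollary compares the eigenvalues $\lambda_k^{N,T}$, $\lambda_k^{M_2,T}$, $\lambda_{k+1}^{N,T}$, $\lambda_{k+1}^{M_2,T}$, lying respectively in positions $2k+1$, $2k+2$, $2k+3$, $2k+4$; the second inequality compares $\lambda_k^{M_1,T}$, $\lambda_k^{D,T}$, $\lambda_{k+1}^{M_1,T}$, $\lambda_{k+1}^{D,T}$, lying respectively in positions $2k+2$, $2k+3$, $2k+4$, $2k+5$. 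In both cases the four positions are consecutive, so the strict separation between consecutive braces yields the asserted strict inequalities at once.

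The only place requiring genuine care, and what I would flag as the main (mild) obstacle, is the index bookkeeping: one must count the period-two pattern of the chain correctly so that each quadruple really lands in four consecutive braces and, crucially, so that no two of the compared eigenvalues ever share a single brace (where the order would be undetermined). A quick check confirms this never happens. The only pairs that cohabit a brace are $\{\lambda_k^{M_1,T},\lambda_k^{M_2,T}\}$ at the even positions and $\{\lambda_k^{D,T},\lambda_{k+1}^{N,T}\}$ at the odd positions beyond the first; but the first inequality involves only the labels $N$ and $M_2$, and the second only $M_1$ and $D$, so neither dangerous pair is ever fully present in a single inequality. Once this is verified, both chains follow immediately by transitivity, with no further computation.
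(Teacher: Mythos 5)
Your proposal is correct and takes essentially the same route as the paper: the corollary is stated there as an immediate consequence of the displayed global ordering, read off exactly as you do, with the compared eigenvalues always lying in distinct consecutive braces so that the undetermined order inside the pairs $\{\lambda_k^{M_1,T},\lambda_k^{M_2,T}\}$ and $\{\lambda_k^{D,T},\lambda_{k+1}^{N,T}\}$ never matters. Your explicit position bookkeeping merely makes precise what the paper leaves implicit.
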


\begin{remark}\label{obs-discrim}
In \cite[Chapter 1]{magnus} the following equalities are proved in the case of an even potential on $[0,2\,T]$:
\begin{eqnarray}
\label{e-y1-2T}
y_1(2\,T,\lambda)&=& 2\,y_1(T,\lambda)\,y'_2(T,\lambda)-1=1+2\,y'_1(T,\lambda)\,y_2(T,\lambda)\\
\label{e-y1-prima-2T}
y'_1(2\,T,\lambda)&=& 2\,y_1(T,\lambda)\,y'_1(T,\lambda)\\
\label{e-y2-2T}
y_2(2\,T,\lambda)&=& 2\,y_2(T,\lambda)\,y'_2(T,\lambda)\\
\label{e-y2-prima-2T}
y'_2(2\,T,\lambda)&=& y_1(2\,T,\lambda)
\end{eqnarray}
with $y_1$ and $y_2$ the fundamental solutions of equation \eqref{e-hill-intro}, defined in Theorem \ref{t-oscilacion}.

Furthermore, we deduce that, as $\tilde{a}$ is an even function, the decomposition of Neumann and Dirichlet spectrums in $2\,T$, 
$$\Lambda_N[\tilde{a},2\,T]=\Lambda_N[a,T]\cup \Lambda_{M_1}[a,T] \quad \text{and} \quad \Lambda_D[\tilde{a},2\,T]=\Lambda_D[a,T]\cup \Lambda_{M_2}[a,T],$$
is immediate from the equalities (\ref{e-y1-prima-2T}) and (\ref{e-y2-2T}). This deduction, despite being more direct than the one obtained in this work, does not give any information about the order of eigenvalues.

Moreover, the study developed in this work presents the additional interest of giving the exact expression of the Green's function of a problem from others. This let us deduce not only the decomposition of the corresponding spectrums, but also several comparison criteria both between the constant sign of the Green's function (Section \ref{sec-signo-cte}), and between such functions point by point and even between the solutions of the different associated linear problems (Section \ref{sec-ppios-comp}).
\end{remark}

\subsection{Examples}\label{sec-ejemplos}
We will see now some examples of the different situations that we could find.

\begin{example}\label{ej-cte}
If we consider the constant case $a(t)=0$, it is known that (see \cite{Cab}) 
$$\lambda_P(0,2\,T)=\lambda_N(0,T)=0 \quad 
\text{ and } \quad 
\lambda_A(0,2\,T)=\lambda_D(0,2\,T)=\left(\frac{\pi}{2\, T}\right)^2.$$

Moreover, denoting $\lambda=m^2>0$ and using \cite{CCiM} we obtain
$$
G_P[m^2,2\, T](t,s)= \frac{1}{2\,m\, \sin{m\, T}}
 \left\{
\begin{array}{ll}
\cos{(m \,(s-t+T))}, & \quad 0\leq s\leq t\leq 2 T \\ 
\vspace*{-0.5cm}\\
\cos{(m \,(s-t-T))},& \quad 0\leq t\leq s\leq 2 T
\end{array}
\right.
$$
and
$$
G_N[m^2, T](t,s)= \frac{1}{m\, \sin{m\, T}}
 \left\{
\begin{array}{ll}
\cos{(m \,s)} \, \cos{(m \,(T-t))}, & \quad 0\leq s\leq t\leq T \\
\vspace*{-0.5cm}\\
\cos{(m \,t)}\,  \cos{(m \,(T-s))}, & \quad 0\leq t\leq s\leq T
\end{array}
\right. .
$$

It is obvious that

$$G_N[m^2, T](0,0)=2\,G_P[m^2,2\, T](0,0)=\frac{1}{m\, \tan{m\, T}}.$$

As consequence we know that ${\lambda}_{M_1}(0,T)=\left(\frac{\pi}{2\, T}\right)^2$.

From the fact that
$$G_N[m^2, T](T,T)=2\,G_P[m^2,2\, T](T,T)=\frac{1}{m\, \tan{m\, T}},$$
we deduce that ${\lambda}_{M_2}(0,T)=\left(\frac{\pi}{2\, T}\right)^2$. This is also deduced from Corollary \ref{l-a(T-t)}.

We can use \cite{CCiM} to calculate the Green's functions for the different boundary conditions
$$
G_D[m^2, T](t,s)= \frac{1}{m\, \sin{m\, T}}
 \left\{
\begin{array}{ll}
\sin{(m\, s)} \, \sin{(m \,(t-T))}, & \quad 0\leq s\leq t\leq T \\
\vspace*{-0.5cm}\\
\sin{(m\, t)} \,  \sin{(m \,(s-T))}, & \quad 0\leq t\leq s\leq T
\end{array}
\right. ,
$$

$$
G_{M_1}[m^2, T](t,s)= \frac{1}{m\, \cos{m\, T}}
 \left\{
\begin{array}{ll}
\cos{(m\, s)} \, \sin{(m \,(t-T))}, &  \quad 0\leq s\leq t\leq T \\
\vspace*{-0.5cm}\\
\cos{(m\, t)} \, \sin{(m \,(s-T))}, & \quad 0\leq t\leq s\leq T
\end{array}
\right. ,
$$

$$
G_{M_2}[m^2, T](t,s)= \frac{-1}{m\, \cos{m\, T}}
 \left\{
\begin{array}{ll}
\sin{(m \,s)} \,\cos{(m \,(T-t))}, & \quad 0\leq s\leq t\leq T \\
\vspace*{-0.5cm}\\
\sin{(m \,t)} \,  \cos{(m \,(T-s))}, & \quad 0\leq t\leq s\leq T
\end{array}
\right. 
$$
and
$$
G_{A}[m^2, 2\,T](t,s)= \frac{-1}{2\,m\, \cos{m\, T}}
 \left\{
\begin{array}{ll}
\sin{(m\, (s-t+T))}, & \quad 0\leq s\leq t\leq T \\
\vspace*{-0.5cm}\\
\sin{(m \,(-s+t+T))}, & \quad 0\leq t\leq s\leq T
\end{array}
\right. .
$$

We observe then that $\lambda_D(0,T)=\left(\frac{\pi}{T}\right)^2$.

In this case $\Lambda_N[a,T]=\Lambda_D[a,T]=\Lambda_P[\tilde{a},2\,T]$ and $\Lambda_{M_1}[a,T]=\Lambda_{M_2}[a,T]=\Lambda_A[\tilde{a},2\,T]$.

Then, if we represent $\tilde{\Delta}(\lambda)=y_1(2\,T,\lambda)+y'_2(2\,T,\lambda)$ graphically, we obtain the following:
\begin{center}
\begin{tikzpicture}
\draw[->] (-2,0) -- (7.5,0) node[right] {$\lambda$};
\draw[->] (-1.2,-1.7) -- (-1.2,2.6) node[right] {$\tilde{\Delta}(\lambda)$};
\draw[red] (-2,1) -- (7.5,1) node[right] {$\tilde{\Delta}(\lambda)=2$};
\draw[red] (-2,-1) -- (7.5,-1) node[right] {$\tilde{\Delta}(\lambda)=-2$};
\draw [blue, thick] (-1.4,2.6) to [out=-75,in=180] (0,-1) to [out=0,in=180] (1.5,1) to [out=0,in=180] (3,-1) to [out=0,in=180] (4.5,1) to [out=0,in=180] (6.1,-1) to [out=0,in=240] (6.9,0.6);
\draw[fill] (-1.2,1) circle (0.06cm);
\draw (-1.4,1) node[above] {$\lambda_0^{N,T}$};
\draw[fill] (1.5,1) circle (0.06cm) node[above] {$\lambda_0^{D,T}=\lambda_1^{N,T}$};
\draw[fill] (4.5,1) circle (0.06cm) node[above] {$\lambda_1^{D,T}=\lambda_2^{N,T}$};
\draw[fill] (0,-1) circle (0.055cm) node[below] {$\lambda_0^{M_1,T}=\lambda_0^{M_2,T}$};
\draw[fill] (3,-1) circle (0.06cm) node[below] {$\lambda_1^{M_1,T}=\lambda_1^{M_2,T}$};
\draw[fill] (6.1,-1) circle (0.06cm) node[below] {$\lambda_2^{M_1,T}=\lambda_2^{M_2,T}$};
\end{tikzpicture}
\end{center}

\end{example}

\begin{example}\label{ej-cte-trozos}
If we consider $T=2$, and $a(t)=0$, if $t \in [0,1]$ and $a(t)=1/10$, for $t \in [1,2]$, the eigenfunctions can be directly obtained and we can verify that

$$\lambda_N(a,2)=\lambda_N(\tilde a,4)=\lambda_P(\tilde a,4)\approx -0.0508$$
$$\lambda_{M_2}(a,2)=\lambda_{D}(\tilde a,4)=\lambda_A(\tilde a,4)\approx 0.5346$$
$$\lambda_{M_1}(a,2)\approx 0.5984$$
$$\lambda_D(a,2)\approx 2.4170$$

Graphically, the situation would be the following
\begin{center}
\begin{tikzpicture}
\draw[->] (-2,0) -- (7.5,0) node[right] {$\lambda$};
\draw[->] (-1,-1.7) -- (-1,2.6) node[right] {$\tilde{\Delta}(\lambda)$};
\draw[red] (-2,1) -- (7.5,1) node[right] {$\tilde{\Delta}(\lambda)=2$};
\draw[red] (-2,-1) -- (7.5,-1) node[right] {$\tilde{\Delta}(\lambda)=-2$};
\draw [blue, thick] (-1.9,2.6) to [out=-60,in=170] (0,-1.2) to [out=0,in=180] (1.5,1.2) to [out=10,in=180] (3,-1.25) to [out=0,in=180] (4.5,1.2) to [out=0,in=180] (6.1,-1.2) to [out=0,in=240] (6.9,0.6);
\draw[fill] (-1.39,1) circle (0.06cm);
\draw (-1.5,1) node[above] {$\lambda_0^{N,T}$};
\draw[fill] (1.05,1) circle (0.06cm) node[above] {$\lambda_0^{D,T}$};
\draw[fill] (1.98,1) circle (0.06cm) node[above] {$ $};
\draw (2.2,1) node[above] {$\lambda_1^{N,T}$};
\draw[fill] (4.06,1) circle (0.06cm) node[above] {$\lambda_1^{D,T}$};
\draw[fill] (4.95,1) circle (0.06cm) node[above] {$ $};
\draw (5.2,1) node[above] {$\lambda_2^{N,T}$};
\draw[fill] (-0.54,-1) circle (0.06cm) node[below] {$\lambda_0^{M_2,T}$};
\draw[fill] (0.44,-1) circle (0.06cm) node[below] {$ $};
\draw (0.51,-1) node[below] {$\lambda_0^{M_1,T}$};
\draw[fill] (2.53,-1) circle (0.06cm) node[below] {$ $};
\draw (2.4,-1) node[below] {$\lambda_1^{M_2,T}$};
\draw[fill] (3.47,-1) circle (0.06cm) node[below] {$ $};
\draw (3.56,-1) node[below] {$\lambda_1^{M_1,T}$};
\draw[fill] (5.65,-1) circle (0.06cm) node[below] {$ $};
\draw (5.5,-1) node[below] {$\lambda_2^{M_2,T}$};
\draw[fill] (6.5,-1) circle (0.06cm) node[below] {$ $};
\draw (6.7,-1) node[below] {$\lambda_2^{M_1,T}$};
\end{tikzpicture}
\end{center}

Note that the eigenvalue of problem (\ref{e-M-2}) always appears before the one of problem (\ref{e-M-1}). In addition, the order between the eigenvalues of (\ref{e-N}) and (\ref{e-D}) is also maintained.
\end{example}

\begin{example}\label{ej-cost}
Considering $T=\pi$ and $a(t)=\cos t$, we obtain the following approximations

$$\lambda_N(a,\pi)=\lambda_N(\tilde a,2\,\pi)=\lambda_P(\tilde a,2\,\pi)=\lambda_P(\tilde{\tilde{a}},4\,\pi)\approx -0.378$$
$$\lambda_{M_1}(a,\pi)=\lambda_A(\tilde{a},2\,\pi)\approx -0.348$$
$$\lambda_{M_2}(a,\pi)=\lambda_D(\tilde{a},2\,\pi)\approx 0.5948$$
$$\lambda_{D}(a,\pi)\approx 0.918$$

Graphically we would observe the following
\begin{center}
\begin{tikzpicture}
\draw[->] (-2,0) -- (7.5,0) node[right] {$\lambda$};
\draw[->] (0,-1.7) -- (0,2.6) node[left] {$\tilde{\Delta}(\lambda)$};
\draw[red] (-2,1) -- (7.5,1) node[right] {$\tilde{\Delta}(\lambda)=2$};
\draw[red] (-2,-1) -- (7.5,-1) node[right] {$\tilde{\Delta}(\lambda)=-2$};
\draw [blue, thick] (-1.8,2.6) to [out=-60,in=170] (0,-1.2) to [out=0,in=180] (1.5,1.2) to [out=10,in=180] (3,-1.25) to [out=0,in=180] (4.5,1.2) to [out=0,in=180] (6.1,-1.2) to [out=0,in=240] (6.9,0.6);
\draw[fill] (-1.33,1) circle (0.06cm) node[above] {$ $};
\draw (-1.05,1) node[above] {$\lambda_0^{N,T}$};
\draw[fill] (1.06,1) circle (0.06cm) node[above] {$\lambda_0^{D,T}$};
\draw[fill] (1.97,1) circle (0.06cm) node[above] {$ $};
\draw (2.2,1) node[above] {$\lambda_1^{N,T}$};
\draw[fill] (4.06,1) circle (0.06cm) node[above] {$\lambda_1^{D,T}$};
\draw[fill] (4.95,1) circle (0.06cm) node[above] {$ $};
\draw (5.2,1) node[above] {$\lambda_2^{N,T}$};
\draw[fill] (-0.54,-1) circle (0.06cm) node[below] {$ $};
\draw (-0.6,-1) node[below] {$\lambda_0^{M_1,T}$};
\draw[fill] (0.447,-1) circle (0.06cm) node[below] {$ $};
\draw (0.6,-1) node[below] {$\lambda_0^{M_2,T}$};
\draw[fill] (2.53,-1) circle (0.06cm) node[below] {$ $};
\draw (2.4,-1) node[below] {$\lambda_1^{M_1,T}$};
\draw[fill] (3.47,-1) circle (0.06cm) node[below] {$ $};
\draw (3.6,-1) node[below] {$\lambda_1^{M_2,T}$};
\draw[fill] (5.65,-1) circle (0.06cm) node[below] {$ $};
\draw (5.5,-1) node[below] {$\lambda_2^{M_1,T}$};
\draw[fill] (6.5,-1) circle (0.06cm) node[below] {$ $};
\draw (6.7,-1) node[below] {$\lambda_2^{M_2,T}$};
\end{tikzpicture}
\end{center}

In this case, the eigenvalue of (\ref{e-M-1}) is smaller than the one of (\ref{e-M-2}). Again, the order between the eigenvalues of (\ref{e-N}) and (\ref{e-D}) is maintained.
\end{example}

The following example shows that eigenvalues related to problem (\ref{e-N}) do not necessarily have to alternate with the ones related to (\ref{e-D}).

\begin{example}\label{ej-cos2t}
Considering $T=\pi$ and $a(t)=\cos{2\,t}$, we obtain the following approximation for the spectrum of the considered problems
$$\Lambda_P[\tilde{\tilde{a}},4\,T]=\{-0.1218,\,0.0923,\,0.47065,\,1.4668,\,2.34076,\,3.9792,4.1009,\dots\}$$
$$\Lambda_P[\tilde{a},2\,T]=\{-0.1218,\,0.47065,\,1.4668,\,3.9792,\,4.1009,\dots\}$$
$$\Lambda_{M_1}[a,T]=\Lambda_{M_2}[a,T]=\Lambda_A[\tilde{a},2\,T]=\{0.0923,\,2.34076,\dots\}$$
$$\Lambda_N[a,T]=\{-0.1218,\,0.47065,\,4.1009,\dots\}$$
$$\Lambda_D[a,T]=\{1.4668,\,3.9792,\dots\}$$

We observe that in this case
$$\lambda_0^{N,T}<\lambda_1^{N,T}<\lambda_0^{D,T}<\lambda_1^{D,T}<\lambda_2^{N,T}.$$

Note that the eigenvalues of Mixed problems are the same. This is due to the fact that ${a(t)=a(T-t)}$. Consequently, all the eigenvalues of $\Lambda_A[\tilde{a},2\,T]$ are a double root of ${\tilde{\Delta}(\lambda)=-2}$.

Graphically,
\begin{center}
\begin{tikzpicture}
\draw[->] (-2,0) -- (9.5,0) node[right] {$\lambda$};
\draw[->] (-1,-1.7) -- (-1,2.6) node[right] {$\tilde{\Delta}(\lambda)$};
\draw[red] (-2,1) -- (9.5,1) node[right] {$\tilde{\Delta}(\lambda)=2$};
\draw[red] (-2,-1) -- (9.5,-1) node[right] {$\tilde{\Delta}(\lambda)=-2$};
\draw [blue, thick] (-1.8,2.6) to [out=-70,in=180] (-0.4,-1) to [out=0,in=180] (1.5,1.2) to [out=0,in=180] (3,-1) to [out=0,in=180] (4.5,1.2) to [out=0,in=180] (6.1,-1) to [out=0,in=180] (8,1.2) to [out=0, in=120] (8.8,0.5);
\draw[fill] (-1.54,1) circle (0.06cm) node[above] {$\lambda_0^{N,T}$};
\draw[fill] (1,1) circle (0.06cm) node[above] {$\lambda_1^{N,T}$};
\draw[fill] (1.93,1) circle (0.06cm) node[above] {$ $};
\draw (2.2,1) node[above] {$\lambda_0^{D,T}$};
\draw[fill] (4.06,1) circle (0.06cm) node[above] {$\lambda_1^{D,T}$};
\draw[fill] (4.95,1) circle (0.06cm) node[above] {$ $};
\draw (5.2,1) node[above] {$\lambda_2^{N,T}$};
\draw[fill] (7.5,1) circle (0.06cm) node[above] {$\lambda_3^{N,T}$};
\draw[fill] (8.45,1) circle (0.06cm) node[above] {$ $};
\draw (8.7,1) node[above] {$\lambda_2^{D,T}$};
\draw[fill] (-0.49,-1) circle (0.06cm) node[below] {$\lambda_0^{M_1,T}=\lambda_0^{M_2,T}$};
\draw[fill] (3,-1) circle (0.06cm) node[below] {$\lambda_1^{M_1,T}=\lambda_1^{M_2,T}$};
\draw[fill] (6.1,-1) circle (0.06cm) node[below] {$\lambda_2^{M_1,T}=\lambda_2^{M_2,T}$};
\end{tikzpicture}
\end{center}
\end{example}

\begin{remark}
The numerical results obtained in the considered examples suggest an order of eigenvalues even more precise than the one theoretically proved in this paper.

It is observed that the eigenvalues of mixed problems alternate, with one eigenvalue of a mixed problem between two consecutive eigenvalues of the other one, and reciprocally. This has been observed in all the considered examples in which the spectrums of the two mixed problems are different (Examples  \ref{ej-cte-trozos} and \ref{ej-cost}), independently of which of them appears first.

We also appreciate in the examples an alternation between Neumann and Dirichlet eigenvalues except for the case in which the spectrum of the mixed problems is the same (in this case the order of appearance of Dirichlet and Neumann changes from one pair of eigenvalues and the next one, as we can see in Example \ref{ej-cos2t}).

This situation suggests the existence of some property justifying this fact. However, for the moment it has not been formally proved and these speculations are uniquely based on the numerical results obtained while working with different potentials.
\end{remark}

\end{document}